\def\thm@space@setup{%
 \thm@preskip=\parskip \thm@postskip=0pt
}
\def\th@remark{%
  \thm@headfont{\itshape}%
  \normalfont 
  \thm@preskip\parskip \thm@postskip=0pt
}
\renewcommand{\PrintDOI}[1]{%
  \href{http://dx.doi.org/#1}{{\tt DOI:#1}}%
}
\renewcommand{\eprint}[1]{#1}
\numberwithin{equation}{section}
\DeclareSymbolFontAlphabet{\mathbb}{AMSb}	
\DeclareSymbolFontAlphabet{\mathbbl}{bbold}	
\newtheorem{Theorem}{Theorem}[section]
\newtheorem*{Theorem*}{Theorem}
\newtheorem{Def}[Theorem]{Definition}
\newtheorem*{Def*}{Def}
\newtheorem{Lem}[Theorem]{Lemma}
\newtheorem{Prop}[Theorem]{Proposition}
\newtheorem{Cor}[Theorem]{Corollary}
\newtheorem{Rem}[Theorem]{Remark}
\newtheorem{Exa}[Theorem]{Example}
\newcommand\bp{\begin{proof}}
\newcommand\ep{\end{proof}}
\mathchardef\mhyph="2D
\DeclareMathOperator{\id}{\mathrm{id}}
\DeclareMathOperator{\Rep}{\mathrm{Rep}}
\DeclareMathOperator{\Irr}{\mathrm{Irr}}
\newcommand{\op}{\mathrm{op}}
\newcommand{\msN}{\mathscr{N}}
\newcommand{\mcH}{\mathcal{H}}
\newcommand{\C}{\mathbb{C}}
\newcommand{\G}{\mathbb{G}}
\newcommand{\Corr}{\mathrm{Corr}}
\newcommand{\ovot}{\bar{\otimes}}
\begin{document}
\author{Joeri De Ro}
\address{Department of Mathematics and Data Science, Vrije Universiteit Brussel, Brussels, Belgium}
\email{joeri.ludo.de.ro@vub.be}
\title{Morita theory for dynamical von Neumann algebras}

\begin{abstract}
Given a locally compact quantum group $\G$ and two $\G$-$W^*$-algebras $\alpha: A\curvearrowleft \G$ and $\beta: B\curvearrowleft \G$, we study the notion of equivariant $W^*$-Morita equivalence $(A, \alpha)\sim_\G (B, \beta)$, which is an equivariant version of Rieffel's notion of $W^*$-Morita equivalence. We prove that important dynamical properties of $\G$-$W^*$-algebras, such as (inner) amenability, are preserved under equivariant Morita equivalence. For a coideal von Neumann algebra $L^\infty(\mathbb{K}\backslash \G)\subseteq L^\infty(\G)$ with dual coideal von Neumann algebra $L^\infty(\check{\mathbb{K}})\subseteq L^\infty(\check{\G})$, we use a natural $\check{\G}$-$W^*$-Morita equivalence $L^\infty(\mathbb{K}\backslash \G)\rtimes_\Delta \G \sim_{\check{\G}} L^\infty(\check{\mathbb{K}})$ to relate dynamical properties of $L^\infty(\mathbb{K}\backslash \G)$ with dynamical properties of $L^\infty(\check{\mathbb{K}})$. We use this to refine some recent results established by Anderson-Sackaney and Khosravi. This refinement allows us to answer a question of Kalantar, Kasprzak, Skalski and Vergnioux, namely that for $\mathbb{H}$ a closed quantum subgroup of the compact quantum group $\G$, coamenability of $\mathbb{H}\backslash \G$ and relative amenability of $\ell^\infty(\check{\mathbb{H}})$ in $\ell^\infty(\check{\G})$ are equivalent. Moreover, if $\G$ is compact, we study the relation between $\G$-$W^*$-Morita equivalence of $(A, \alpha)$ and $(B, \beta)$ and $\G$-$C^*$-Morita equivalence of the associated $\G$-$C^*$-algebras $(\mathcal{R}(A), \alpha)$ and $(\mathcal{R}(B), \beta)$ of regular elements.
\end{abstract}
\maketitle

\section{Introduction}

Given a von Neumann algebra $A$, let us denote the $W^*$-category of normal, unital $*$-representations of $A$ on Hilbert spaces by $\Rep(A)$. In the seminal paper \cite{Rie74}, Rieffel called two von Neumann algebras $A$ and $B$ \emph{$W^*$-Morita equivalent} if $\Rep(A)$ and $\Rep(B)$ are equivalent as $W^*$-categories. In fact, we have the following result:
\begin{Theorem}\label{Morita characterizations} The following statements are equivalent:
    \begin{enumerate}\setlength\itemsep{-0.5em}
    \item\label{category} $\Rep(A)$ and $\Rep(B)$ are equivalent as $W^*$-categories.
    \item\label{correspondence} There exists a Hilbert space $\mathcal{H}$, a faithful unital normal $*$-representation $\pi: A\to B(\mathcal{H})$ and a faithful unital normal anti-$*$-representation $\rho: B \to B(\mathcal{H})$ such that $\pi(A) = \rho(B)'$.
    \item\label{linking} There exists a linking von Neumann algebra between $A$ and $B$, i.e.\ a von Neumann algebra $E$ together with full projections $p, 1-p\in E$ such that $pEp\cong A$ and $(1-p)E(1-p)\cong B$.
    \item\label{corner} There exists a Hilbert space $\mathcal{H}$ and a full projection $p\in B(\mathcal{H})\ovot B$ such that $A \cong p(B(\mathcal{H})\ovot B)p.$
\end{enumerate}
\end{Theorem}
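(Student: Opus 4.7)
The plan is to prove the four conditions equivalent with (2) as the central hub: (2) is the most concrete formulation, placing $A$ and $B$ as mutual commutants on a common Hilbert space; (3) and (4) are linking-algebra repackagings; and (1) is the abstract categorical statement. Accordingly, I would establish (3) $\Leftrightarrow$ (4), (2) $\Leftrightarrow$ (3), and (1) $\Leftrightarrow$ (2) separately, each via standard Morita-theoretic constructions (equivalence bimodules, relative tensor products, and corner commutants).

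For $(3)\Leftrightarrow(4)$, the key observation is that $B(\mathcal{H})\ovot B$ is the canonical amplification of $B$, itself Morita equivalent to $B$. Given (3), one faithfully represents the linking algebra $E$ on a Hilbert space in which $B$ acts in standard form and identifies $E$ with a full corner of some $B(\mathcal{L})\ovot B$, giving (4); conversely, $pEp \cong A$ with $p$ full in $E = B(\mathcal{H})\ovot B$ already yields Morita equivalence $A \sim_M E \sim_M B$, and the linking algebra in (3) is then assembled directly inside $E$ by choosing a complementary full projection (such as $p_0\otimes 1_B$ for $p_0$ a rank-one projection) whose corner is $B$. For $(2)\Leftrightarrow(3)$, given (3) with $E$ faithfully represented on $\mathcal{K}$, the corners $pEp$ and $(1-p)E(1-p)$ act on $p\mathcal{K}$ and $(1-p)\mathcal{K}$; fullness of $p$ and $1-p$ together with the off-diagonal bimodules $pE(1-p)$ and $(1-p)Ep$ allow one to transport the $B$-action onto $p\mathcal{K}$ as an anti-representation and to deduce the mutual commutant property $\pi(A)=\rho(B)'$. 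Conversely, given (2), one assembles the $2\times 2$ linking algebra inside $B(\mathcal{H}\oplus\mathcal{H})$ whose diagonal corners are $\pi(A)$ and $\rho(B)$ and whose off-diagonal entries form the $A$-$B$ equivalence bimodule given by $\mathcal{H}$ itself; fullness of the corner projections follows from faithfulness of $\pi,\rho$ combined with $\pi(A)=\rho(B)'$.

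For $(2)\Rightarrow(1)$, define $F:\Rep(A)\to\Rep(B)$ by the Connes relative tensor product $\sigma\mapsto \mathcal{H}\otimes_A\sigma$ with $B$ acting on the first factor via $\rho$; the quasi-inverse is given by tensoring with the conjugate correspondence, and the relation $\pi(A)=\rho(B)'$ ensures $W^*$-functoriality and the required natural isomorphisms. For $(1)\Rightarrow(2)$, apply a quasi-inverse $G:\Rep(B)\to\Rep(A)$ to the standard representation $\sigma_B:B\to B(L^2(B))$ to obtain $\mathcal{H}:=G(L^2(B))$ carrying a faithful normal $A$-representation $\pi$; functoriality of $G$ on endomorphism algebras transports the spatial commutant $\End_B(L^2(B))=JBJ$ to $\End_A(\mathcal{H})=\pi(A)'$, and composing with the canonical anti-isomorphism $b\mapsto Jb^*J$ yields a faithful anti-$*$-representation $\rho:B\to B(\mathcal{H})$ with $\pi(A)=\rho(B)'$.

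The main obstacle is the implication $(1)\Rightarrow(2)$: the input is a purely categorical equivalence and one must extract concrete operator-algebraic data. The crucial technical point is that $G$ must preserve the full normal $W^*$-enrichment of endomorphism algebras (so that the induced map $B^\op\to\pi(A)'$ is a normal $*$-homomorphism, not merely an abstract isomorphism), and essential surjectivity of $G$ must be used to upgrade the inclusion obtained from intertwiner transport to the \emph{equality} $\pi(A)=\rho(B)'$. A closely related fullness-to-equality step reappears in $(3)\Rightarrow(2)$, where central support arguments based on the fullness of $p$ and $1-p$ are needed to pass from an inclusion of corner commutants to the required equality of commutants on $p\mathcal{K}$.
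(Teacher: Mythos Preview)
First, note that the paper does not itself prove Theorem~\ref{Morita characterizations}: it is stated as background and attributed to \cites{Rie74, BDH88, Bro03, DC09}. The paper does, however, prove the equivariant analogues (Theorems~\ref{char1} and~\ref{char2}), which specialize to the classical statement when $\G$ is trivial, so one can compare your plan against those arguments.

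Your step $(2)\Rightarrow(3)$ contains a genuine error. You propose a linking algebra inside $B(\mathcal{H}\oplus\mathcal{H})$ with diagonal corners $\pi(A)$ and $\rho(B)$. But $\rho$ is an \emph{anti}-$*$-representation, so as a von Neumann subalgebra of $B(\mathcal{H})$ one has $\rho(B)\cong B^{\op}$, not $B$; your construction therefore yields a linking algebra between $A$ and $B^{\op}$, which is not what $(3)$ asks for. Relatedly, ``$\mathcal{H}$ itself'' is a Connes correspondence, not a Rieffel equivalence bimodule, so it is not clear what operators you intend to place in the off-diagonal blocks. The paper's construction (proof of Theorem~\ref{char1}, specialized to trivial $\G$) repairs both issues at once: one works on $\mathcal{H}\oplus L^2(B)$ and defines the linking algebra as the commutant of the diagonal anti-representation $b\mapsto\rho(b)\oplus\rho_B(b)$, so that the corners are $\pi(A)$ and $\pi_B(B)\cong B$, with off-diagonal $\mathscr{L}_B(L^2(B),\mathcal{H})$. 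Your $(4)\Rightarrow(3)$ is also loose: $p$ and $p_0\otimes 1_B$ are in general not orthogonal, so they cannot serve as the pair $p,\,1-p$ of a linking algebra ``inside $E$''. This is only a packaging issue, however; one can instead use $pE(p_0\otimes 1)$ as an $A$--$B$ equivalence bimodule and build a fresh linking algebra from it, or, as the paper effectively does, bypass $(3)$ and go $(4)\Rightarrow(2)$ directly via the amplification and corner arguments of Proposition~\ref{amplification} and Example~\ref{main example}.
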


From the above equivalences, it is easily deduced that $W^*$-Morita equivalence preserves important von Neumann algebraic properties, such as injectivity (= semidiscreteness), factoriality, etc. Therefore, the notion of $W^*$-Morita equivalence provides a useful tool in von Neumann algebra theory for comparing von Neumann algebras. We note that the equivalences between the above statements are well-known, see e.g.\ \cites{Rie74, BDH88, Bro03,DC09}.

The main goal of this paper is to study the notion of \emph{equivariant} Morita equivalence. More concretely, we consider a locally compact quantum group $\G$ and we upgrade the von Neumann algebras $A$ and $B$ with actions $\alpha: A\curvearrowleft \G$ and $\beta: B\curvearrowleft \G$. We refer to these as $\G$-$W^*$-algebras. As a starting point, we consider the following definition, which is an equivariant version of \eqref{correspondence} above:

\begin{Def}[\cite{DCDR24a}*{Definition 5.8}] \label{MoritaClassical} Let $\G$ be a locally compact quantum group.
    The $\G$-$W^*$-algebras $(A, \alpha)$ and $(B, \beta)$ are called $\G$-$W^*$-Morita equivalent, and we write $(A, \alpha)\sim_\G (B, \beta)$, if there exists a Hilbert space $\mathcal{H}$, endowed with the following data:
    \begin{itemize}\setlength\itemsep{-0.5em}
        \item A normal, unital faithful $*$-representation $\pi: A \to B(\mathcal{H})$,
        \item a normal, unital faithful anti-$*$-representation $\rho: B \to B(\mathcal{H})$,
        \item a unitary $\G$-representation $U\in B(\mathcal{H})\ovot L^\infty(\G)$,
        \end{itemize}
        such that $\pi(A)'= \rho(B)$ and
        $$(\pi\otimes \id)\alpha(a) = U(\pi(a)\otimes 1)U^*, \quad (\rho \otimes R)\beta(b) = U^*(\rho(b)\otimes 1)U, \quad a\in A, \quad b\in B.$$
\end{Def}

It should be mentioned that the notion of equivariant Morita equivalence has already been considered earlier throughout the quantum group literature, see e.g.\ \cites{Vae05, DC12} where explicit examples of equivariant Morita equivalences are considered in the $C^*$-context. 

As a first main task, we will give a complete generalization of Theorem \ref{Morita characterizations} to the \emph{equivariant setting}. 
In this regard, we note that an equivariant version of the equivalence $(1) \iff (2)$ has already been established in \cite{DR24b}*{Theorem 3.10}. The equivalence \eqref{correspondence} $\iff$ \eqref{linking} takes the following obvious form in the equivariant setting:

\textbf{Theorem \ref{char1}.} \textit{Let $\G$ be a locally compact quantum group and $(A, \alpha)$ and $(B, \beta)$ two $\G$-$W^*$-algebras. Then $(A, \alpha)$ and $(B, \beta)$ are $\G$-$W^*$-Morita equivalent if and only if there exists a $\G$-$W^*$-algebra $(E, \gamma)$, a projection $p\in E^\gamma$ such that $p$ and $1-p$ are full projections of $E$ and $\G$-equivariant isomorphisms $pEp \cong A$ and $(1-p)E(1-p) \cong B$.}

The correct version of \eqref{corner} in the equivariant setting is more subtle, as additional equivariant data appears:

\textbf{Theorem \ref{char2}.}\textit{ Let $\G$ be a locally compact quantum group and let $(A, \alpha)$ and $(B, \beta)$ be $\G$-$W^*$-algebras. The following are equivalent:}
      \begin{enumerate}\setlength\itemsep{-0.5em}
        \item \textit{$(A, \alpha)\sim_\G (B, \beta)$.}
        \item \textit{There exists a Hilbert space $\mathcal{K}$, a full projection $p\in B(\mathcal{K})\ovot B$, $P\in B(\mathcal{K})\ovot B \ovot L^\infty(\G)$ and a $*$-isomorphism $\phi: p(B(\mathcal{K})\ovot B)p\to A$ such that the following conditions are satisfied:}
            \begin{enumerate}\setlength\itemsep{-0.5em}
        \item \textit{$P^*P=(\id \otimes \beta)(p)$.}
        \item \textit{$PP^* = p\otimes 1$.}
        \item \textit{$(\id \otimes \id \otimes \Delta)(P) = (P\otimes 1)(\id \otimes \beta \otimes \id)(P).$}
        \item \textit{$\alpha(\phi(z)) = (\phi\otimes \id)(P(\id \otimes \beta)(z)P^*)$ for all $z\in p(B(\mathcal{K})\ovot B)p.$}
    \end{enumerate}
    \end{enumerate}

In other words, one action is obtained by amplifying the other action and then twisting it with some kind of (non-unitary) cocycle.

We give examples of important equivariant Morita equivalences that appear in practice. For example:

\textbf{Proposition \ref{coideal Morita}}\textit{ Let $\G$ be a locally compact quantum group and let $L^\infty(\mathbb{K}\backslash \G)$ be a coideal von Neumann subalgebra of $L^\infty(\G)$ with associated dual coideal von Neumann subalgebra $L^\infty(\check{\mathbb{K}})\subseteq L^\infty(\check{\mathbb{G}})$. There is a $\check{\G}$-$W^*$-Morita equivalence $L^\infty(\check{\mathbb{K}}) \sim_{\check{\G}} L^\infty(\mathbb{K}\backslash \G)\rtimes_\Delta \G.$}

In the setting of quantum groups, the notions of (strong) $\G$-amenability and (strong) inner $\G$-amenability for $\G$-$W^*$-algebras were introduced and studied in detail in the papers \cites{Moa18, DCDR24a, DCDR24b, DR24a} and the notion of $\G$-injectivity was studied in detail in the papers \cites{CN16, Cr17a, DR24a}. Using Theorem \ref{char2}, we can then prove that $\G$-$W^*$-Morita equivalence preserves these important dynamical properties:

\textbf{Theorem \ref{MoritaApplication}} \textit{Let $(A, \alpha)$ and $(B, \beta)$ be $\G$-$W^*$-Morita equivalent. The following statements hold:}
    \begin{enumerate}\setlength\itemsep{-0.5em}
        \item \textit{$(A, \alpha)$ is (strongly) $\G$-amenable if and only if $(B, \beta)$ is (strongly) $\G$-amenable.}
\item \textit{$(A, \alpha)$ is (strongly) inner $\G$-amenable if and only if $(B, \beta)$ is (strongly) inner $\G$-amenable.}
\item \textit{$(A, \alpha)$ is $\G$-injective if and only if $(B, \beta)$ is $\G$-injective.}
\end{enumerate}

We use Theorem \ref{MoritaApplication} to solve some open questions in the literature, illustrating the use of equivariant Morita equivalence in the theory of locally compact quantum groups. For example, applying Theorem \ref{MoritaApplication} to the $\check{\G}$-$W^*$-Morita equivalence in Proposition \ref{coideal Morita}, we can refine the important result \cite{AK24}*{Theorem 5.9}. For the definition of compact quasi-subgroup, we refer the reader to \cite{AK24}*{Section 2.4}.

\textbf{Theorem \ref{main}.} \textit{Let $\G$ be a compact quantum group and $L^\infty(\mathbb{K}\backslash \G)$ a (right) coideal von Neumann algebra. The following statements are equivalent:}
\begin{enumerate}\setlength\itemsep{-0.5em}
    \item \textit{$\ell^\infty(\check{\mathbb{K}})$ is $\check{\G}$-amenable.}
    \item \textit{$\ell^\infty(\check{\mathbb{K}})$ is $\check{\G}$-injective.}
    \item \textit{$L^\infty(\mathbb{K}\backslash \G)$ is inner $\G$-amenable.}
    \item \textit{$L^\infty(\mathbb{K}\backslash \G)$ is $\G$-injective.}
    \item \textit{$\mathbb{K}$ is a compact quasi-subgroup of $\G$ and $\ell^\infty(\check{\mathbb{K}})$ is relatively amenable in $\ell^\infty(\check{\G})$, i.e.\ there exists a $\check{\G}$-equivariant ucp map $\ell^\infty(\check{\G})\to \ell^\infty(\check{\mathbb{K}}).$}
\end{enumerate}

In particular, applying this result to a closed quantum subgroup $\mathbb{K}= \mathbb{H}$ of the compact quantum group $\G$ and using the results of \cite{AK24}, we find:

\textbf{Corollary \ref{ezcor}.}  Let $\mathbb{H}$ be a closed quantum subgroup of the compact quantum group $\G$. Then $\mathbb{H}\backslash \G$ is co-amenable if and only if $\ell^\infty(\check{\mathbb{H}})$ is relative amenable in $\ell^\infty(\check{\G})$.

This gives an affirmative answer to the open question \cite{KKSV22}*{Question 8.1}.

Finally, if $\G$ is a compact quantum group and $(A, \alpha)$ is a $\G$-$W^*$-algebra, we write $\mathcal{R}(A)$ for its space of \emph{regular elements}. Then $(\mathcal{R}(A), \alpha)$ is a $\sigma$-weakly dense $\G$-$C^*$-subalgebra of $A$. We use the theory of self-dual Hilbert modules \cite{Pa73} and the results of \cite{DC12} in the $C^*$-setting to prove the following result:

\textbf{Theorem \ref{ergodic}.}\textit{ Let $\G$ be a compact quantum group and let $(A, \alpha)$ and $(B, \beta)$ be two $\G$-$W^*$-algebras. Consider the following statements:}
    \begin{enumerate}\setlength\itemsep{-0.5em}
        \item \textit{$(A, \alpha)$ and $(B, \beta)$ are $\G$-$W^*$-Morita equivalent.}
        \item \textit{$(\mathcal{R}(A), \alpha)$ and $(\mathcal{R}(B), \beta)$ are $\G$-$C^*$-Morita equivalent.}
    \end{enumerate}
\textit{Then $(2)\implies (1)$ and $(1) \iff (2)$ if both $(A, \alpha)$ and $(B, \beta)$ are ergodic.}

It then immediately follows that the main result of \cite{DC12} is also valid on the von Neumann algebra level:

\textbf{Corollary \ref{classification}.} \textit{The classification of Podleś spheres up to $SU_q(2)$-$C^*$-Morita equivalence, as established in \cite{DC12}*{Theorem 0.1}, is also valid on the von Neumann algebra level. }
    
\section{Preliminaries}

All vector spaces and algebras in this paper are defined over the complex numbers. Inner products of Hilbert spaces are assumed to be anti-linear in the first factor. If $V,W$ are normed vector spaces, we write $B(V,W)$ for the vector space of bounded linear operators $V\to W$, naturally normed through the operator norm and we write $B(V):= B(V,V)$. If $(V, \tau)$ is a topological vector space and $X$ is a subset of $V$, then we write $[X]^\tau$ for the closure of the linear span of $X$. If $V$ is a normed space, we simply write $[X]$ for the norm-closure of the linear span of $X$. 
Let $C$ be a $C^*$-algebra. By a projection $p$ in $C$, we always mean an element $p\in C$ such that $p= p^2 = p^*$, and if $C$ is unital, we write $p^\perp := 1-p$. We assume that the inner product of a right Hilbert $C$-module $X$ is anti-linear in the first factor and the inner product of a left Hilbert $C$-module is anti-linear in the second factor. The $C^*$-algebra of $C$-adjointable operators on the right Hilbert-$C$-module $X$ will be denoted by $\mathscr{L}_C(X)$. 

\subsection{Von Neumann algebras} We shall use the terminologies von Neumann algebra and $W^*$-algebra as synonyms. Given a von Neumann algebra $A$, we write $A_*$ for its space of normal functionals. A projection $p\in A$ is called \emph{full} if $[ApA]^{\sigma\text{-weak}}= A$. We quickly go over some of the notations in Tomita-Takesaki theory \cite{Tak03} that we will be using.

The standard Hilbert space of $A$ will be denoted by $L^2(A)$, together with its natural standard representation 
$\pi_A: A \to B(L^2(A)).$
This Hilbert space is endowed with a natural anti-unitary $J_A: L^2(A)\to L^2(A)$ satisfying $J_A= J_A^{-1}= J_A^*$. We denote the standard anti-$*$-representation by 
$\rho_A: A \to B(L^2(A)): a \mapsto J_A \pi_A(a)^* J_A.$
We have $\rho_A(A)' = \pi_A(A).$

If $\varphi$ is a normal, semi-finite, faithful (= nsf) weight on $A$, we write
$$\mathscr{M}_\varphi^+ :=\{a\in A^+: \varphi(a) <\infty\}, \quad \mathscr{N}_\varphi:= \{a\in A: \varphi(a^*a) < \infty\}.$$
With respect to the nsf weight $\varphi$, we can make a GNS construction $\pi_\varphi: A\to B(L^2(A, \varphi))$ where $\pi_\varphi$ is a unital, normal $*$-homomorphism. We will make use of the associated GNS-map $\Lambda_\varphi: \mathscr{N}_\varphi\to L^2(A, \varphi)$. The Hilbert space $L^2(A, \varphi)$ is naturally endowed with the anti-unitary
$J_\varphi: L^2(A, \varphi)\to L^2(A, \varphi)$ satisfying $J_\varphi= J_\varphi^* = J_\varphi^{-1}.$ 
There is a (essentially by construction of $L^2(A)$) canonical unitary identification $L^2(A)\cong L^2(A, \varphi)$ compatible with the standard representations and the canonical anti-unitaries. 

\subsection{Correspondences and W*-Morita equivalence} \label{corr}

Let $A,B,C$ be von Neumann algebras. We introduce some terminology. 

\begin{Def}\cite{Rie74}*{Definition 5.1}
    A right Hilbert $W^*$-$A$-$B$-bimodule consists of a right Hilbert $B$-module $X$, together with a unital $*$-homomorphism $\pi: A \to \mathscr{L}_B(X)$ such that the maps $A\to B: a \mapsto \langle x, ay\rangle_B$ are normal for all $x,y \in X$.
\end{Def}

Similarly, we define the notion of a \emph{left} Hilbert $W^*$-$A$-$B$-bimodule $X$, where we work with a left Hilbert $A$-module $X$ and a unital anti-$*$-representation $B \to {}_A\mathscr{L}(X)$.

\begin{Def}\cite{Con80+}\label{correspondence_definition}
    An $A$-$B$-correspondence consists of a Hilbert space $\mathcal{H}$ together with a unital, normal $*$-representation $\pi: A \to B(\mathcal{H})$ and a unital, normal anti-$*$-representation $\rho: B \to B(\mathcal{H})$ such that $\pi(a)\rho(b) = \rho(b)\pi(a)$ for all $a\in A$ and $b\in B$. We write $\mathcal{H}= (\mathcal{H}, \pi, \rho)\in \Corr(A,B)$.
\end{Def}

There is a way to `compose' a (right) Hilbert $W^*$-$A$-$B$-bimodule $X$ and a $B$-$C$-correspondence $\mathcal{H}$ to obtain an $A$-$C$-correspondence $X \otimes_B \mathcal{H}$. Indeed, consider the balanced (algebraic) tensor product $X \odot_B\mathcal{H}$ and endow it with the semi-inner product uniquely determined by
$$\langle x\otimes_B \xi, y \otimes_B \eta\rangle = \langle \xi, \pi_\mathcal{H}(\langle x,y\rangle_B)\eta\rangle, \quad x,y \in X, \quad \xi, \eta \in \mathcal{H}.$$
By separation-completion of $X\odot_B\mathcal{H}$, we then obtain the Hilbert space $X\otimes_B\mathcal{H}$. It carries a natural $A$-$C$-correspondence structure through
$$\pi_\boxtimes(a)(x\otimes_B \xi):= ax\otimes_B \xi, \quad \rho_\boxtimes(c)(x\otimes_B \xi):=x \otimes_B \rho_\mathcal{H}(c)\xi, \quad a \in A, \quad c\in C, \quad x \in X, \quad \xi \in \mathcal{H}.$$

If $\mathcal{H}\in \Corr(A,B)$, then
$$X:= \mathscr{L}_B(L^2(B), \mathcal{H})= \{x\in B(L^2(B), \mathcal{H})\mid \forall b \in B: \rho_\mathcal{H}(b)x=x\rho_B(b)\}$$ becomes a (right) Hilbert $W^*$-$A$-$B$-bimodule through 
$$ax := \pi_\mathcal{H}(a)x, \quad xb:= x\pi_B(b), \quad \langle x,y\rangle_B := \pi_B^{-1}(x^*y), \quad x,y\in X, \quad a\in A, \quad b\in B.$$

We will mainly be interested in special kinds of Hilbert $W^*$-$A$-$B$-bimodules or $A$-$B$-correspondences, namely these that encode the important notion of a Morita equivalence between the von Neumann algebras $A$ and $B$.  To this end, we provide the following two definitions:

\begin{Def}\label{equivalence bimodule} \cite{Rie74}*{Definition 7.5}
    An equivalence $A$-$B$-bimodule $X$ consists of an $A$-$B$-bimodule $X$ endowed with an $A$-valued and a $B$-valued inner product that turns $X$ simultaneously into a left and a right Hilbert $W^*$-$A$-$B$-bimodule. Moreover, the following properties must be satisfied:
\begin{itemize}\setlength\itemsep{-0.5em}
    \item $\langle x,y\rangle_A z= x \langle y,z\rangle_B$ for all $x,y,z\in X.$
    \item $[\langle X,X\rangle_A]^{\sigma\text{-weak}}= A$ and $[\langle X,X\rangle_B]^{\sigma\text{-weak}}= B$.
\end{itemize}
\end{Def}

\begin{Def}\label{definitionn}
    A Morita $A$-$B$-correspondence consists of an $A$-$B$-correspondence $(\mathcal{H}, \pi, \rho)$ such that $\pi$ and $\rho$ are faithful and such that $\pi(A)'= \rho(B)$. 
\end{Def}

It is well-known how to move between these pictures of $W^*$-Morita equivalence $A \sim B$. Concretely, if $\mathcal{H}$ is a Morita correspondence between $A$ and $B$, then $X:= \mathscr{L}_B(L^2(B), \mathcal{H})$ becomes an equivalence $A$-$B$-bimodule through 
$$\langle x,y\rangle_A:= \pi_\mathcal{H}^{-1}(xy^*), \quad \langle x,y\rangle_B := \pi_B^{-1}(x^*y), \quad xb := x \pi_B(b), \quad ax := \pi_\mathcal{H}(a)x, \quad a\in A, \quad b\in B, \quad x,y \in X.$$

To see that the inner products satisfy the desired density conditions, we need the following well-known result:

\begin{Prop} Let $\rho: B \to B(\mathcal{H})$ be a unital, normal, anti-$*$-representation and put $X:= \mathscr{L}_B(L^2(B), \mathcal{H}).$
    The following statements are equivalent:
    \begin{enumerate}\setlength\itemsep{-0.5em}
        \item $\pi_B(B) = [X^*X]^{\sigma\text{-weak}}.$
        \item $\rho$ is injective.
    \end{enumerate}
    If moreover $(\mathcal{H}, \pi, \rho)$ is a Morita $A$-$B$-correspondence, then $[XX^*]^{\sigma\text{-weak}}= \pi(A).$
\end{Prop}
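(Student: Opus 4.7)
The strategy is to identify $[X^*X]^{\sigma\text{-weak}}$ as a $\sigma$-weakly closed two-sided ideal of $\pi_B(B)$ and to show that this ideal corresponds exactly to the central support of $\rho$. First, since each $x \in X$ intertwines $\rho_B$ and $\rho$, the products $x^*y$ commute with $\rho_B(B)$ and hence lie in $\rho_B(B)' = \pi_B(B)$. Moreover $X\pi_B(b) \subseteq X$ and $\pi_B(b)X^* \subseteq X^*$, so $[X^*X]^{\sigma\text{-weak}}$ is a two-sided $\sigma$-weakly closed ideal of $\pi_B(B)$, necessarily of the form $\pi_B(Bq)$ for a unique central projection $q \in Z(B)$. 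It then suffices to prove $q = c$, where $c \in Z(B)$ denotes the central support of $\rho$ (the unique projection satisfying $\ker \rho = B(1-c)$; note $c \neq 0$ since $\rho$ is unital).

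The central technical step is the density $[XL^2(B)] = \mathcal{H}$. I would prove this by realising the normal right Hilbert $B$-module $\mathcal{H}$ (via $\rho$) as a complemented submodule of an amplification of the standard module: there is a Hilbert space $K$ and a projection $p \in B(K) \ovot \pi_B(B)$ such that $\mathcal{H} \cong p(K \otimes L^2(B))$ and $\rho(b) = p(1 \otimes \rho_B(b))p$ under this identification. For every $k \in K$ the map $\xi \mapsto p(k \otimes \xi)$ is a bounded $\rho_B$-$\rho$-intertwiner, hence an element of $X$, and as $k$ varies these elements jointly have range spanning $\mathcal{H}$. This structural input on normal anti-representations is where I expect the main technical work to sit.

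Given the density, the equivalence becomes a short central-projection computation using the standard-form identity $\pi_B(e) = \rho_B(e)$ for $e \in Z(B)$ (which follows from $J_B\pi_B(e)J_B = \pi_B(e^*) = \pi_B(e)$ for self-adjoint central $e$). For a central projection $e \in Z(B)$ I would chain
\[ e \leq 1-q \iff \pi_B(e)[X^*X]^{\sigma\text{-weak}} = 0 \iff x\pi_B(e) = 0 \ \forall x \in X \iff \rho(e)x = 0 \ \forall x \in X \iff e \leq 1-c, \]
where the second step extracts $x\pi_B(e) = 0$ from the identity $\pi_B(e)x^*x\pi_B(e) = 0$, the third uses $\pi_B(e) = \rho_B(e)$ together with the intertwining relation $\rho(e)x = x\rho_B(e)$, and the fourth combines $[XL^2(B)] = \mathcal{H}$ with $\ker\rho = B(1-c)$. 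Hence $q = c$, so $[X^*X]^{\sigma\text{-weak}} = \pi_B(B)$ iff $c = 1$ iff $\rho$ is injective.

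For the Morita statement I would rerun this ideal-theoretic argument on $[XX^*]^{\sigma\text{-weak}}$. Since $\pi(A)$ and $\rho(B)$ commute, $\pi(A)X \subseteq X$, so $[XX^*]^{\sigma\text{-weak}}$ is a two-sided $\sigma$-weakly closed ideal of $\rho(B)' = \pi(A)$, necessarily of the form $\pi(Az)$ for some central $z \in Z(A)$. The same trick yields $\pi(1-z)x = 0$ for all $x \in X$, and then $\pi(1-z)\mathcal{H} = \pi(1-z)[XL^2(B)] = 0$ combined with faithfulness of $\pi$ forces $1-z = 0$.
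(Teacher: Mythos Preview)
Your argument is correct and follows essentially the same route as the paper: both identify $[X^*X]^{\sigma\text{-weak}}$ as a $\sigma$-weakly closed ideal of $\pi_B(B)$ cut out by a central projection, exploit $\pi_B(e)=\rho_B(e)$ for central $e$, and invoke the density $[XL^2(B)]=\mathcal{H}$ to pass from $\rho(p^\perp)x=0$ to $\rho(p^\perp)=0$. The only differences are organizational: the paper simply cites \cite{Tak03}*{IX.3 Lemma 3.3} for the density rather than proving it via amplification, and it handles $(1)\Rightarrow(2)$ by a two-line direct computation (if $\rho(b)=0$ then $\rho_B(b)y^*z=y^*\rho(b)z=0$, so $\rho_B(b)=0$ by hypothesis~(1)) that bypasses the density; your packaging of both directions into the single identity $q=c$ (central support of $\rho$) is slightly more informative but not materially different.
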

\begin{proof} $(1)\implies (2)$ Assume that $b\in B$ and $\rho(b)= 0$. Then for $y,z \in X$ we have
$\rho_B(b)y^*z = y^* \rho(b)z = 0$
and thus $\rho_B(b) = 0$, whence $b= 0$.

$(2)\implies (1)$ Assume that $\rho$ is faithful. Since $I:= [X^*X]^{\sigma\text{-weak}}$ is a $\sigma$-weakly closed ideal in $\pi_B(B)$, there is a central projection $p\in B$ (and thus $\rho_B(p) = \pi_B(p)$) such that $I = \pi_B(p) \pi_B(B)$. Therefore, if $x\in X$, we find that 
$x^*x = \pi_B(p) x^*x = \rho_B(p)x^*x = x^* \rho(p)x$
so that $x^*\rho(p^\perp)x= 0$. Therefore, $\rho(p^\perp)x=0$ by the $C^*$-identity. Since $X L^2(B)$ is norm-dense in $\mathcal{H}$ \cite{Tak03}*{IX.3 Lemma 3.3}, we deduce that $\rho(p^\perp) = 0$ so faithfulness yields $1= p$ and thus $I = \pi_B(B)$.

Assume next that $(\mathcal{H}, \pi, \rho)$ is a Morita $A$-$B$-correspondence. Put $J:= [XX^*]^{\sigma\text{-weak}}$ which is a $\sigma$-weakly closed ideal in $\pi(A)$. There exists a central projection $p\in A$ such that $\pi(p)\pi(A)= J$. Then clearly for every $x\in X$ we have 
$\pi(p^\perp)xx^* \pi(p^\perp) = \pi(p^\perp)xx^* = 0$
so that $\pi(p^\perp)x= 0$. Since $XL^2(B)$ is norm dense in $\mathcal{H}$, the faithfulness of $\pi$ implies that $p=1$.
\end{proof}

To go in the other direction, we can use the following useful result, which we will use several times in this paper. This result is surely known to experts, but we could not find a reference in the literature. We therefore give a self-contained proof.

\begin{Prop}\label{technical}
    Let $A,B,C$ be von Neumann algebras, let $X$ be an $A$-$B$-equivalence bimodule and let $\mathcal{K}$ be a Morita $B$-$C$-correspondence. Then  $X\otimes_B \mathcal{K}$ is a Morita $A$-$C$ correspondence.
\end{Prop}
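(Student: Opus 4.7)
The plan is to verify the two defining conditions of a Morita $A$-$C$-correspondence for $(X\otimes_B \mathcal{K}, \pi_\boxtimes, \rho_\boxtimes)$: faithfulness of the two representations, and the commutant identity $\pi_\boxtimes(A)' = \rho_\boxtimes(C)$. Faithfulness is essentially routine. For $\pi_\boxtimes$, if $\pi_\boxtimes(a)=0$ then the norm formula on the interior tensor product yields $\pi_\mathcal{K}(\langle ax, ax\rangle_B)=0$ for all $x\in X$, and chaining faithfulness of $\pi_\mathcal{K}$ (supplied by the Morita hypothesis on $\mathcal{K}$) with faithfulness of the left $A$-action on the equivalence bimodule $X$ — the latter following from $[\langle X, X\rangle_A]^{\sigma\text{-w}} = A$ applied to the central projection cutting out $\ker\pi_X$ — forces $a=0$. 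The argument for $\rho_\boxtimes$ is symmetric, using faithfulness of $\rho_\mathcal{K}$ and of the right $B$-action on $X$.

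For the commutant identity, $\rho_\boxtimes(C) \subseteq \pi_\boxtimes(A)'$ is immediate. For the reverse, my strategy is to reduce to a composition of equivalence bimodules. Using the preceding proposition, the right Hilbert $W^*$-$B$-$C$-bimodule $Y := \mathscr{L}_C(L^2(C), \mathcal{K})$ attached to $\mathcal{K}$, equipped additionally with the left $B$-valued inner product $\langle y_1, y_2\rangle_B := \pi_\mathcal{K}^{-1}(y_1 y_2^*)$, satisfies $[\langle Y, Y\rangle_B]^{\sigma\text{-w}} = B$ and $[\langle Y, Y\rangle_C]^{\sigma\text{-w}} = C$, and is therefore an equivalence $B$-$C$-bimodule in the sense of Definition \ref{equivalence bimodule}. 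I would then check, in the spirit of \cite{Rie74}, that the interior tensor product $Z := X \otimes_B Y$ with natural $A$- and $C$-valued inner products
\[
\langle x_1 \otimes y_1, x_2 \otimes y_2\rangle_A := \langle x_1 \langle y_1, y_2\rangle_B, x_2\rangle_A, \qquad \langle x_1 \otimes y_1, x_2 \otimes y_2\rangle_C := \langle y_1, \langle x_1, x_2\rangle_B y_2\rangle_C
\]
is an equivalence $A$-$C$-bimodule. Finally, the canonical unitary $\mathcal{K} \cong Y \otimes_C L^2(C)$ of $B$-$C$-correspondences produces a unitary
\[
X \otimes_B \mathcal{K} \;\cong\; (X \otimes_B Y) \otimes_C L^2(C) \;=\; Z \otimes_C L^2(C)
\]
of $A$-$C$-correspondences; since the right-hand side is the Morita $A$-$C$-correspondence built from the equivalence bimodule $Z$ via the construction recalled in Section \ref{corr}, the commutant identity on $X \otimes_B \mathcal{K}$ follows.

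The hard part is the composition step, that is, showing $Z = X\otimes_B Y$ is an equivalence $A$-$C$-bimodule. Well-definedness of the proposed inner products on the balanced tensor product, together with the associativity $\langle z_1, z_2\rangle_A z_3 = z_1 \langle z_2, z_3\rangle_C$, unfold to the defining identity $\langle x_1, x_2\rangle_A x_3 = x_1\langle x_2, x_3\rangle_B$ on $X$ and its counterpart on $Y$. The more subtle point is the fullness of the $A$- and $C$-valued inner products on $Z$: these must combine the fullness on $X$, the fullness on $Y$, and the $\sigma$-weak continuity of the bimodule operations, which is where the passage from Rieffel's $C^*$-setting to the present $W^*$-framework requires careful topological bookkeeping.
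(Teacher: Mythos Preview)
Your faithfulness arguments are fine and match the paper's. The approaches diverge on the commutant identity, and there your reduction has a gap.

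The paper argues directly. Given a unitary $T \in \pi_\boxtimes(A)'$, it defines an operator $S$ on the dense subspace $\mathcal{K}_0 = \operatorname{span}\{\pi_\mathcal{K}(\langle x,y\rangle_B)\xi\}$ of $\mathcal{K}$ by $S(\pi_\mathcal{K}(\langle x,y\rangle_B)\xi) := i_x^* T(y \otimes_B \xi)$, checks well-definedness and the identity $T(y\otimes_B \eta) = y \otimes_B S\eta$ using the associativity $\langle x,y\rangle_A z = x\langle y,z\rangle_B$, shows $S$ is isometric, and concludes $\tilde S \in \pi_\mathcal{K}(B)' = \rho_\mathcal{K}(C)$ so that $T \in \rho_\boxtimes(C)$.

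Your route instead composes $X$ with the equivalence $B$-$C$-bimodule $Y = \mathscr{L}_C(L^2(C),\mathcal{K})$, forms $Z = X \otimes_B Y$, and then invokes that $Z \otimes_C L^2(C)$ is a Morita $A$-$C$-correspondence ``via the construction recalled in Section~\ref{corr}''. But in this paper that passage from an equivalence bimodule to a Morita correspondence \emph{is} Proposition~\ref{technical} itself, applied with the Morita $C$-$C$-correspondence $L^2(C)$; Section~\ref{corr} offers no independent argument. So you have reduced the general statement to a special case that is not proved elsewhere in the paper and is not visibly easier: one must still show that every $T \in \pi_\boxtimes(A)'$ on $Z \otimes_C L^2(C)$ arises from $\rho_C(C)$, which is the same kind of commutant computation you were trying to avoid.

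The strategy can be salvaged, but not for free. You would need an independent proof that an equivalence $A$-$C$-bimodule $Z$ yields a Morita correspondence $Z \otimes_C L^2(C)$---for instance by showing $Z$ is self-dual and the left action gives an isomorphism $A \cong \mathscr{L}_C(Z)$, then applying the commutant theorem for self-dual Hilbert $W^*$-modules, or alternatively by building the linking von Neumann algebra and using standard-form arguments as in the Remark after Theorem~\ref{char1}. Either way this is real work that your proposal does not supply, and once it is in place the detour through $Z = X \otimes_B Y$ (with its own fullness bookkeeping, as you note) is longer than the paper's direct computation.
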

\begin{proof}
    Given $x\in X$, let us define the bounded operator 
    $i_x: \mathcal{K}\to X\otimes_B \mathcal{K}$ by $i_x(\xi) = x\otimes_B \xi$. Its adjoint is then given by $i_x^*(y \otimes_B \xi) = \pi_\mathcal{K}(\langle x,y\rangle_B) \xi$ for $y\in X$ and $\xi \in \mathcal{K}$.

    Consider the subspace $$\mathcal{K}_0:= \operatorname{span}\{\pi_{\mathcal{K}}(\langle x,y\rangle_B)\xi: x,y \in X, \xi \in \mathcal{K}\}\subseteq \mathcal{K}.$$
    Since $[\langle X,X\rangle_B]^{\sigma\text{-strong}}=B$, we conclude that $\mathcal{K}_0$ is norm-dense in $\mathcal{K}$.
    Assume now that $T \in \pi_\boxtimes(A)'$ is a unitary. If $\sum_{j=1}^n \pi_\mathcal{K}(\langle x_j, y_j\rangle_B)\xi_j = 0$, then for $s,t \in X$ and $\eta \in \mathcal{K}$, we have
    \begin{align*}
\left \langle \sum_{j=1}^n i_{x_j}^* T(y_j\otimes_B \xi_j), \pi_\mathcal{K}(\langle s,t\rangle_B)\eta\right\rangle&= \sum_{j=1}^n \langle T(y_j\otimes_B \xi_j), x_j\otimes_B \pi_\mathcal{K}(\langle s,t\rangle_B)\eta\rangle\\
&= \sum_{j=1}^n \langle T(y_j\otimes_B \xi_j), \langle x_j, s\rangle_A t \otimes_B \eta\rangle\\
&=\sum_{j=1}^n \langle T(\langle s, x_j\rangle_A y_j\otimes_B \xi_j\rangle), t \otimes_B \eta\rangle\\
&=\sum_{j=1}^n \langle T(s \otimes_B \pi_\mathcal{K}(\langle x_j, y_j\rangle_B)\xi_j), t \otimes_B \eta\rangle = 0,\end{align*}
    so that $\sum_{j=1}^n i_{x_j}^* T(y_j\otimes_B \xi_j)= 0$. Therefore, there is a unique linear operator $S: \mathcal{K}_0\to \mathcal{K}$ such that 
   $$S(\pi_{\mathcal{K}}(\langle x,y\rangle_B)\xi) = i_x^* T(y\otimes_B \xi), \quad x,y \in X, \quad \xi \in \mathcal{K}.$$
For $x,y,s,t \in X$ and $\xi, \eta \in \mathcal{K}$, we have
\begin{align*}
    \langle \xi, \pi_{\mathcal{K}}(\langle x,y\rangle_B) S \pi_{\mathcal{K}}(\langle s,t\rangle_B)\eta\rangle &= \langle s\otimes_B \pi_{\mathcal{K}}(\langle y,x\rangle_B)\xi, T(t\otimes_B \eta)\rangle \\
    &= \langle \langle s,y\rangle_A x \otimes_B \xi, T(t\otimes_B \eta)\rangle\\
    &=\langle x \otimes_B \xi, T(\langle y,s\rangle_A t \otimes_B \eta)\rangle\\
    &= \langle x \otimes_B \xi, T(y\otimes_B \pi_{\mathcal{K}}(\langle s,t\rangle_B) \eta)\rangle\\
    &= \langle \xi, S\pi_{\mathcal{K}}(\langle x,y\rangle_B) \pi_{\mathcal{K}}(\langle s,t\rangle_B) \eta\rangle,
\end{align*}
from which we deduce that
\begin{equation}\label{intertwine}
    \pi_{\mathcal{K}}(\langle x,y\rangle_B)S \zeta = S \pi_{\mathcal{K}}(\langle x,y\rangle_B)\zeta, \quad \zeta \in \mathcal{K}_0.
\end{equation}
If $\xi, \eta \in \mathcal{K}_0$ and $x,y \in X$, we therefore find
$$\langle x \otimes_B\xi, y\otimes_B S \eta\rangle= \langle \xi, \pi_{\mathcal{K}}(\langle x,y\rangle_B)S\eta\rangle = \langle \xi, S \pi_{\mathcal{K}}(\langle x,y\rangle_B)\eta \rangle = \langle x \otimes_B \xi, T(y\otimes_B \eta)\rangle,$$
whence 
\begin{equation}\label{identity}
    T(y\otimes_B \eta) = y\otimes_B S \eta, \quad y \in X, \quad \eta \in \mathcal{K}_0.
\end{equation}
In particular, since $T$ is isometric, we find for $\xi, \eta \in \mathcal{K}_0$ and $x,y\in X$ that 
$$\langle \eta, \pi_{\mathcal{K}}(\langle y,x\rangle_B)\xi\rangle= \langle y\otimes_B \eta, x \otimes_B \xi\rangle = \langle y\otimes_B S\eta, x \otimes_B S\xi\rangle = \langle S\eta, \pi_{\mathcal{K}}(\langle y,x\rangle_B)S\xi\rangle.$$
Since $[\langle X,X\rangle_B]^{\sigma\text{-strong}}=B$, we conclude that 
$$\langle \eta, \xi\rangle = \langle S\eta, S\xi\rangle, \quad \eta, \xi \in \mathcal{K}_0.$$
Therefore, $S$ is isometric and extends uniquely to an isometry $\tilde{S}\in B(\mathcal{K}).$ By \eqref{intertwine}, we deduce that $\tilde{S}\in \pi_{\mathcal{K}}(B)'= \rho_\mathcal{K}(C)$. Then \eqref{identity}  shows that $T = \rho_\boxtimes(c)$ for some $c\in C$. We conclude that $\pi_{\boxtimes}(A)' = \rho_\boxtimes(C)$.

It remains to check that $\pi_{\boxtimes}$ and $\rho_{\boxtimes}$ are faithful. But if $a\in A$ and $\pi_{\boxtimes}(a) = 0$, then 
$$0 = \langle x\otimes_B \xi, ay\otimes_B \eta\rangle = \langle \xi, \pi_\mathcal{K}(\langle x,ay\rangle_B) \eta\rangle,\quad x,y \in Y, \quad \xi, \eta \in \mathcal{K}.$$
By faithfulness of $\pi_\mathcal{K}$, it follows that $\langle x,ay\rangle_B = 0$ for all $x, y \in X$ whence $aX = 0$. Since $[\langle X,X\rangle_A]^{\sigma\text{-strong}}=A$, this implies $a= 0$. Thus, $\pi_\boxtimes$ is faithful. Similarly, if $\rho_\boxtimes(c)= 0$ for some $c\in C$, then $\pi_\mathcal{K}(\langle x, y\rangle_B) \rho_\mathcal{K}(c)= 0$ for all $x,y \in X$. Since $\rho_\mathcal{K}$ is faithful, we conclude that $c = 0$. Thus, $\rho_\boxtimes$ is faithful.
\end{proof}

We call the von Neumann algebras $A$ and $B$ \emph{$W^*$-Morita equivalent} if there exists a Morita $A$-$B$-correspondence, or equivalently if there exists an equivalence $A$-$B$-bimodule. 

\subsection{Locally compact quantum groups}
A \emph{Hopf-von Neumann algebra} is a pair $(M, \Delta)$ where $M$ is a von Neumann algebra and $\Delta: M \to M \ovot M$ a unital, normal, isometric $*$-homomorphism such that $(\Delta \otimes \id)\circ \Delta = (\id \otimes \Delta)\circ \Delta$.

A \emph{locally compact quantum group} \cites{KV00, KV03,VV03} $\G$ is a Hopf-von Neumann algebra $(L^\infty(\G), \Delta)$ for which there exist normal, semifinite, faithful weights $\Phi, \Psi: L^\infty(\G)_+\to [0, \infty]$ such that $(\id\otimes \Phi)\Delta(x)= \Phi(x)1$ for all $x\in \mathscr{M}_\Phi^+$ and $(\Psi\otimes \id)\Delta(x)= \Psi(x)1$ for all $x\in \mathscr{M}_\Psi^+$. The von Neumann algebra predual $L^1(\G):= L^\infty(\G)_*$ is a completely contractive Banach algebra for the multiplication 
$$\mu\star \nu:= (\mu\otimes \nu)\circ \Delta, \quad \mu, \nu \in L^1(\G).$$

Without any loss of generality, we may (and will) assume that $L^\infty(\G)$ is faithfully represented in standard form on the Hilbert space $L^2(\G)$. To simplify notation, the standard anti-$*$-representation $\rho_{L^\infty(\G)}$ will simply be denoted by $\rho_\G$.

With respect to the weights $\Phi, \Psi$, we then have GNS-maps
$$\Lambda_\Phi: \mathscr{N}_\Phi\to L^2(\G), \quad \Lambda_\Psi: \mathscr{N}_\Psi\to L^2(\G).$$

Fundamental to the theory of locally compact quantum groups are the unitaries 
\[
V,W \in B(L^2(\G)\otimes L^2(\G)),
\]
called respectively \emph{right} and \emph{left} regular unitary representation. They are uniquely characterized by the identities 
\[
(\id\otimes \omega)(V) \Lambda_{\Psi}(x) = \Lambda_{\Psi}((\id\otimes \omega)\Delta(x)),
\qquad \omega \in L^1(\G),x\in \msN_{\Psi},
\]
\[
 (\omega \otimes \id)(W^*)\Lambda_{\Phi}(x) = \Lambda_{\Phi}((\omega\otimes \id)\Delta(x)),\qquad \omega \in L^1(\G),x\in \msN_{\Phi}.
\]
They are \emph{multiplicative unitaries} \cite{BS93} meaning that
\[
V_{12}V_{13}V_{23} = V_{23}V_{12},\qquad W_{12}W_{13}W_{23}= W_{23}W_{12},
\]
and they implement the coproduct of $L^\infty(\G)$ in the sense that
$$\label{EqComultImpl}
W^*(1\otimes x)W = \Delta(x) = V(x\otimes 1)V^*,\qquad x\in L^\infty(\G).$$
Moreover, we have
$$C_0(\G):=[(\omega\otimes \id)(V) \mid \omega \in B(L^2(\G))_*] = [(\id\otimes \omega)(W) \mid \omega \in B(L^2(\G))_*],$$
which is a $\sigma$-weakly dense C$^*$-subalgebra of $L^\infty(\G)$. Then $\Delta(C_0(\G))\subseteq M(C_0(\G)\otimes C_0(\G))$. 

We can also define the von Neumann algebra
$L^\infty(\hat{\G}) = [(\omega\otimes \id)(W) \mid \omega \in L^1(\G)]^{\sigma\textrm{-weak}}$
with coproduct
$\hat{\Delta}(\hat{x}) = \Sigma W(\hat{x}\otimes 1)W^*\Sigma$ where $\hat{x}\in L^\infty(\hat{\G})$. 
Then the pair $(L^\infty(\hat{\G}), \hat{\Delta})$ defines the locally compact quantum group $\hat{\G}$, which is called the dual of $\G$. Similarly, we can also define the von Neumann algebra $L^\infty(\check{\G}) = [(\id\otimes \omega)(V) \mid \omega \in L^1(\G)]^{\sigma\textrm{-weak}}$
with coproduct 
$\check{\Delta}(\check{x}) = V^*(1\otimes \check{x})V$ for $\check{x}\in L^\infty(\check{\G})$.
Then the pair $(L^\infty(\check{\G}), \check{\Delta})$ defines a locally compact quantum group $\check{\G}$. We have $L^\infty(\hat{\G})= L^\infty(\check{\G})'$. If $G$ is a locally compact group, we have
$L^\infty(\hat{G}) = \mathscr{L}(G)$ and $L^\infty(\check{G}) = \mathscr{R}(G)$, the left resp. right group von Neumann algebra associated with $G$. Therefore, $\hat{\G}$ and $\check{\G}$ should be thought of as a left and a right version of the dual locally compact quantum group. The von Neumann algebras $L^\infty(\check{\G})$ and $L^\infty(\hat{\G})$, acting naturally on $L^2(\G)$, are in standard form.

The left multiplicative unitary of $\check{\G}$ will be denoted by $\check{W}$ and the right multiplicative unitary of $\check{\G}$ will be denoted by $\check{V}$. We have $\check{W}= V$ and 
$$V \in L^\infty(\check{\G})\ovot L^\infty(\G), \quad W \in L^\infty(\G)\ovot L^\infty(\hat{\G}), \quad \check{V}\in L^\infty(\G)'\ovot L^\infty(\check{\G}).$$
In fact, we have e.g.\ $V\in M(C_0(\check{\G})\otimes C_0(\G))$ and similarly for $W$ and  $\check{V}$. For future use, we will also need the following notations:
\begin{align*}
    &\Delta_l: B(L^2(\G)) \to  L^\infty(\G)\ovot B(L^2(\G)): x \mapsto W^*(1\otimes x)W,\\
    &\Delta_r: B(L^2(\G))\to B(L^2(\G))\ovot L^\infty(\G): x \mapsto V(x\otimes 1)V^*,\\
    &\check{\Delta}_r: B(L^2(\G))\to B(L^2(\G)) \ovot L^\infty(\check{\G}): x \mapsto \check{V}(x\otimes 1)\check{V}^*.
\end{align*}
A locally compact quantum group $\G$ is called \emph{compact} if $C_0(\G)$ is unital, and we then write $C_0(\G)= C(\G)$. The canonical dense Hopf $^*$-subalgebra of $C(\G)$ will be denoted by $\mathcal{O}(\G)$. In that case, the left Haar weight $\Phi$ is a normal state and $\Phi = \Psi$ (after appropriate normalizations).

A locally compact quantum group $\G$ is called \emph{discrete} if $\check{\G}$ is compact and we then write $L^\infty(\G)= \ell^\infty(\G)$ and $C_0(\G)= c_0(\G)$. If $\G$ is a discrete quantum group, there is a unique normal state $\epsilon \in \ell^1(\G):= L^1(\G)$, called \emph{counit}, such that 
$(\epsilon \otimes \id)\Delta = \id = (\id \otimes \epsilon)\Delta$.

A \emph{unitary representation} of a locally compact quantum group $\G$ on a Hilbert space consists of a pair $(\mathcal{H}, U)$ where $\mathcal{H}$ is a Hilbert space and $U\in B(\mathcal{H})\ovot L^\infty(\G)$ a unitary satisfying $(\id \otimes \Delta)(U) = U_{12}U_{13}$. We write $(\mathcal{H}, U)\in \Rep(\G)$. For example, we have $V \in \Rep(\G)$ and  $W_{21}\in \Rep(\G)$. Given a Hilbert space $\mathcal{H}$, we write $\mathbb{I}= 1 \otimes 1 \in B(\mathcal{H})\ovot L^\infty(\G)$ for the trivial representation. Given $U \in B(\mathcal{H})\ovot L^\infty(\G)$ and $\omega \in L^1(\G)$, we write $U(\omega):= (\id \otimes \omega)(U)$.

Consider the modular conjugation $J:= J_{L^\infty(\G)}$ associated to the standard representation $L^\infty(\G)\subseteq B(L^2(\G))$ and the modular conjugation $\check{J}:= J_{L^\infty(\check{\G})}$ associated to the standard representation $L^\infty(\check{\G})\subseteq B(L^2(\G))$. We have 
$\check{J}L^\infty(\G)\check{J}=L^\infty(\G)$
so we obtain the anti-$*$-morphism
$$R: L^\infty(\G)\to L^\infty(\G): x \mapsto \check{J}x^*\check{J}.$$
We call $R$ the \emph{unitary antipode} of $\G$. There is a canonical unimodular complex number $c\in \mathbb{C}$ such that 
$$c \check{J}J=\overline{c}J \check{J}.$$
We write $u_\G:= c \check{J} J$ for the associated self-adjoint unitary. The following identities will be useful:
\begin{align*}
    (\check{J}\otimes J)W(\check{J}\otimes J)=W^*, \quad (J\otimes \check{J})V(J\otimes \check{J})=V^*, \quad (u_\G\otimes 1)V(u_\G\otimes 1)=W_{21}.
\end{align*}
\subsection{Dynamical von Neumann algebras and crossed products}
Let $\G$ be a locally compact quantum group.
A (right) \emph{$\G$-$W^*$-algebra} is a pair $(A, \alpha)$ such that $A$ is a von Neumann algebra and $\alpha: A \to A\ovot L^\infty(\G)$ is an injective, unital, normal $*$-homomorphism satisfying the coaction property $(\alpha\otimes \id)\circ \alpha = (\id \otimes \Delta)\circ \alpha$. We sometimes denote this with $\alpha: A\curvearrowleft\G$. We write
$A^{\alpha} = \{a\in A: \alpha(a)= a \otimes 1\}$
for the von Neumann subalgebra of fixed points of $(A, \alpha)$. The trivial $\G$-action on a von Neumann algebra $A$ will always be denoted by $\tau$, i.e.\ $\tau(a)= a \otimes 1$ for $a\in A$. 

If $(A, \alpha)$ and $(B, \beta)$ are $\G$-$W^*$-algebras, then a unital completely positive (=ucp) map $T: A \to B$ (not necessarily normal) is said to be $\G$-equivariant if $(T \otimes \id)\circ \alpha = \beta \circ T$, where $T\otimes \id: A \ovot L^\infty(\G)\to B \ovot L^\infty(\G)$ is the unique ucp map such that
$$(\id \otimes \omega)((T\otimes \id)(z)) = T((\id \otimes \omega)(z)), \quad z \in A \ovot L^\infty(\G), \quad \omega \in L^1(\G).$$
If we want to emphasize the actions, we say that $T: (A, \alpha)\to (B, \beta)$ is a $\G$-equivariant map.

Given a $\G$-$W^*$-algebra $(A, \alpha)$,  we define the \emph{crossed product von Neumann algebra}
$$A\rtimes_\alpha \G= [\alpha(A)(1\otimes L^\infty(\check{\G}))]''.$$
We have the alternative description
$$A\rtimes_\alpha \G = \{z \in A \ovot B(L^2(\G)): (\alpha\otimes \id)(z) =(\id \otimes \Delta_l)(z)\}.$$
It is easy to verify that 
$$(\id \otimes \check{\Delta}_r)(A\rtimes_\alpha \G)\subseteq (A\rtimes_\alpha \G)\ovot L^\infty(\check{\G}),$$
so that $(A\rtimes_\alpha \G, \id \otimes \check{\Delta}_r)$ becomes a $\check{\G}$-$W^*$-algebra. This $\check{\G}$-action on the crossed product will often be implicitly understood. We have that $(A\rtimes_\alpha \G)^{\id \otimes \check{\Delta}_r}= \alpha(A)$. Given any von Neumann algebra $A$, we have $A\rtimes_\tau \G = A\ovot L^\infty(\check{\G})$.

If $(A, \alpha)$ is a $\G$-$W^*$-algebra where $A$ is standardly represented on a Hilbert space $\mathcal{H}$, there exists a canonical unitary $\G$-representation $U_\alpha \in B(\mathcal{H})\ovot L^\infty(\G)$ such that $\alpha(a)= U_\alpha(a\otimes 1)U_\alpha^*$ for all $a\in A$ \cite{Va01}. We call $U_\alpha$ the \emph{unitary implementation} of $\alpha$. More precisely, we can identify $L^2(A\rtimes_\alpha \G)= L^2(A)\otimes L^2(\G)$ in such a way that the standard representation $\pi_{A\rtimes_\alpha \G}$ corresponds to $\pi_A\otimes \id: A\rtimes_\alpha \G \to B(L^2(A)\otimes L^2(\G))$ and such that the anti-$*$-representation $\rho_{A\rtimes_\alpha \G}: A\rtimes_\alpha \G \to B(L^2(A)\otimes L^2(\G))$ is given by
$$\rho_{A\rtimes_\alpha \G}(\alpha(a)) = \rho_A(a)\otimes 1, \quad \rho_{A\rtimes_\alpha \G}(1\otimes \check{y})=U_\alpha(1\otimes \rho_{L^\infty(\check{\G})}(\check{y}))U_\alpha^*, \quad a\in A, \quad \check{y}\in L^\infty(\check{\G}).$$

\subsection{Equivariant correspondences} Let $\G$ be a locally compact quantum group. We recall some of the theory from \cite{DCDR24a}. Definition \ref{correspondence_definition} is generalized to the equivariant setting as follows:

Let $(A, \alpha)$ and $(B, \beta)$ be two $\G$-$W^*$-algebras. 
A $\G$-$A$-$B$-correspondence consists of a Hilbert space $\mathcal{H}$ together with the following data:
\begin{itemize}\setlength\itemsep{-0.5em}
    \item A unitary $\G$-representation $U\in B(\mathcal{H})\ovot L^\infty(\G)$.
    \item A unital, normal $*$-homomorphism $\pi: A\to B(\mathcal{H})$ such that $(\pi\otimes \id)\alpha(a) = U(\pi(a)\otimes 1)U^*$ for all $a\in A$.
    \item A unital, normal anti-$*$-homomorphism $\rho: B \to B(\mathcal{H})$ such that $(\rho\otimes R)\beta(b) = U^*(\rho(b)\otimes 1)U$ for all $b\in B$.
\end{itemize}

Given two $\G$-$A$-$B$-correspondences $\mathcal{H}$ and $\mathcal{K}$, a bounded linear map $x: \mathcal{H}\to \mathcal{K}$ is called \emph{intertwiner} of correspondences if 
$$x\pi_{\mathcal{H}}(a) = \pi_{\mathcal{K}}(a)x, \quad x\rho_{\mathcal{H}}(b) = \rho_{\mathcal{K}}(b)x, \quad x U_{\mathcal{H}}(\omega) = U_{\mathcal{K}}(\omega)x, \quad a\in A, \quad b\in B, \quad \omega \in L^1(\G).$$
The set of such intertwiners is denoted by ${}_A\mathscr{L}_B^\G(\mathcal{H}, \mathcal{K})$. If $\G$ is the trivial group or if one of the von Neumann algebras $A$ or $B$ is equal to $\C$, we omit it from the notation. 

In this way, we obtain the $W^*$-category $\Corr^\G(A,B)$ of $\G$-$A$-$B$-correspondences. One can `compose' equivariant correspondences:
Given $\mathcal{H}\in \Corr^\G(A,B)$ and $\mathcal{G}\in \Corr^\G(B,C)$, we can (see subsection \ref{corr}) form
$$\mathcal{H}\boxtimes_B\mathcal{G}:= \mathscr{L}_B(L^2(B),\mathcal{H})\otimes_B\mathcal{G}\in \Corr(A,C).$$

Moreover, as is proven in \cite{DCDR24a}*{Proposition 5.6}, this correspondence carries a canonical $\G$-representation $U_{\boxtimes}\in B(\mathcal{H}\boxtimes_B \mathcal{G})\ovot L^\infty(\G)$ such that $$(\mathcal{H}\boxtimes_B \mathcal{G}, \pi_\boxtimes, \rho_\boxtimes, U_\boxtimes)\in \Corr^\G(A,C).$$

\section{Characterizations of equivariant Morita equivalence}

In this section, we develop the equivariant Morita theory. We give several equivalent characterizations of the notion of equivariant Morita equivalence, and we discuss examples of equivariant Morita equivalences that appear naturally in the theory of locally compact quantum groups. Throughout this entire section, we fix a locally compact quantum group $\G$ and $\G$-$W^*$-algebras $(A,\alpha)$ and $(B, \beta)$. 

We will use the following generalization of Definition \ref{definitionn}, introduced first in \cite{DCDR24a}, as our starting point:
\begin{Def}\label{definition Morita}
 A $\G$-Morita correspondence between $(A, \alpha)$ and $(B, \beta)$ consists of $(\mathcal{H}, \pi, \rho, U)\in \Corr^\G(A,B)$ such that $\pi, \rho$ are faithful and $\pi(A)'= \rho(B)$.
    If such a $\G$-Morita correspondence exists, the $\G$-$W^*$-algebras $(A, \alpha)$ and $(B, \beta)$ are called $\G$-$W^*$-Morita equivalent, and we write $(A, \alpha)\sim_\G (B, \beta)$.
\end{Def}

If $A,B$ are Morita equivalent von Neumann algebras, then we simply write $A\sim B$. This is the situation where $\G$ is the trivial group. 

To justify the terminology in Definition \ref{definition Morita}, we need to prove the following:

\begin{Prop}
    $\sim_\G$ is an equivalence relation.
\end{Prop}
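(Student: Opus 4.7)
Reflexivity, symmetry, and transitivity will be addressed in turn. For reflexivity, given a $\G$-$W^*$-algebra $(A,\alpha)$, I represent $A$ in standard form on $L^2(A)$ and consider the quadruple $(L^2(A), \pi_A, \rho_A, U_\alpha)$, where $U_\alpha \in B(L^2(A))\ovot L^\infty(\G)$ is the unitary implementation of $\alpha$. The Morita conditions $\pi_A(A)' = \rho_A(A)$ and the faithfulness of both maps come from the standard form setup, while the two intertwining identities $(\pi_A\otimes \id)\alpha(a)= U_\alpha(\pi_A(a)\otimes 1)U_\alpha^*$ and $(\rho_A\otimes R)\alpha(a)= U_\alpha^*(\rho_A(a)\otimes 1)U_\alpha$ are the defining features of $U_\alpha$ from \cite{Va01}.

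For transitivity, given Morita $\G$-correspondences $\mathcal{H}\in \Corr^\G(A,B)$ and $\mathcal{K}\in \Corr^\G(B,C)$, I form the composite $\mathcal{H}\boxtimes_B\mathcal{K}=\mathscr{L}_B(L^2(B), \mathcal{H})\otimes_B \mathcal{K}$, which is naturally a $\G$-$A$-$C$-correspondence by \cite{DCDR24a}*{Proposition 5.6}. Since $\mathcal{H}$ is Morita, the right Hilbert $W^*$-bimodule $X:= \mathscr{L}_B(L^2(B),\mathcal{H})$ is an equivalence $A$-$B$-bimodule, so Proposition \ref{technical} applies to the pair $(X,\mathcal{K})$ and yields that $X\otimes_B \mathcal{K}$ is itself a Morita $A$-$C$-correspondence. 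Hence $(A,\alpha)\sim_\G (C,\gamma)$, with no additional work required beyond quoting Proposition \ref{technical} together with the cited composition result.

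The most delicate step is symmetry. Given a $\G$-Morita correspondence $(\mathcal{H},\pi,\rho,U)$ from $(A,\alpha)$ to $(B,\beta)$, I transport it to the conjugate Hilbert space $\bar{\mathcal{H}}$ via the canonical antilinear unitary $j: \mathcal{H}\to \bar{\mathcal{H}}$. The formulas $\pi'(b):= j\rho(b^*) j^{-1}$ and $\rho'(a):= j\pi(a^*)j^{-1}$ produce a normal faithful $*$-representation $\pi': B\to B(\bar{\mathcal{H}})$ and a normal faithful anti-$*$-representation $\rho': A\to B(\bar{\mathcal{H}})$ satisfying $\pi'(B)'=\rho'(A)$, using that conjugation by an antilinear unitary turns anti-$*$-representations into $*$-representations and reverses products. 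The real obstacle is to transport the $\G$-representation: since $j$ is antilinear, $j\otimes 1$ does not make sense on $\mathcal{H}\otimes L^2(\G)$, so I combine $j$ with the modular conjugation $J$ on $L^2(\G)$ to form an antilinear isometry $j\otimes J$, and then invoke the identities $R(x)=\check{J}x^*\check{J}$ and $(u_\G\otimes 1)V(u_\G\otimes 1)=W_{21}$ to produce the desired $U'\in B(\bar{\mathcal{H}})\ovot L^\infty(\G)$ as a conjugate of $U^*$ incorporating the unitary antipode. Verifying that $U'$ is a unitary $\G$-representation and that both equivariance identities of Definition \ref{definition Morita} are satisfied is a direct computation with the Tomita--Takesaki data of $L^\infty(\G)$; matching the unitary antipode $R$ on the quantum-group side with the antilinear structure on the Hilbert space side is what requires the most care.
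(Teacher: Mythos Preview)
Your proof is correct and follows the same approach as the paper: reflexivity via the standard form with the unitary implementation $U_\alpha$, transitivity via Proposition~\ref{technical} applied to the composite $\mathcal{H}\boxtimes_B\mathcal{K}$, and symmetry via the conjugate correspondence $\overline{\mathcal{H}}$. The only difference is one of presentation: for symmetry the paper simply cites \cite{DCDR24a}*{Section 5.1} for the construction of $\overline{\mathcal{H}}\in\Corr^\G(B,A)$, whereas you sketch the construction yourself (conjugating by the antilinear $j$ and correcting with modular data to build $U'$); your sketch is correct in spirit but stops short of writing down the actual formula for $U'$, so in a final version you would either complete that computation or, as the paper does, appeal to the reference where it is already carried out.
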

\begin{proof} Considering the $\G$-Morita correspondence $(L^2(A), \pi_A, \rho_A, U_\alpha)\in \Corr^\G(A,A)$, we deduce that $\sim_\G$ is reflexive. If $\mathcal{H} \in \Corr^\G(A,B)$ is a $\G$-Morita correspondence, then $\overline{\mathcal{H}}\in \Corr^\G(B,A)$ \cite{DCDR24a}*{Section 5.1} is also a $\G$-Morita correspondence, proving the symmetry of $\sim_\G$. Suppose that $\mathcal{H}\in \Corr^\G(A,B)$ and $\mathcal{K}\in \Corr^\G(B,C)$ are $\G$-Morita correspondences. By Proposition \ref{technical}, 
$\mathcal{H}\boxtimes_B \mathcal{K} \in \Corr^\G(A,C)$ is a $\G$-Morita correspondence. This shows that $\sim_\G$ is transitive.
\end{proof}

The following trivial observation will be useful in the sequel:

\begin{Prop}[Amplification]\label{amplification} Let $M,N$ be $W^*$-algebras. Then
$$M\sim N, \quad (A, \alpha)\sim_\G (B, \beta)\implies (M\ovot A, \id \otimes \alpha)\sim_\G (N \ovot B, \id \otimes \beta).$$
\end{Prop}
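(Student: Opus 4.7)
The plan is to construct a $\G$-Morita correspondence for the amplified pair as an external tensor product of witnesses to the two given Morita equivalences. Choose a Morita $M$-$N$-correspondence $(\mathcal{K}, \pi_M, \rho_N)$ realizing $M \sim N$, and a $\G$-Morita correspondence $(\mathcal{H}, \pi_A, \rho_B, U)$ realizing $(A,\alpha) \sim_\G (B,\beta)$. On $\mathcal{K} \otimes \mathcal{H}$, define
\[
\Pi := \pi_M \otimes \pi_A : M \ovot A \to B(\mathcal{K} \otimes \mathcal{H}), \qquad \Phi := \rho_N \otimes \rho_B : N \ovot B \to B(\mathcal{K} \otimes \mathcal{H}),
\]
and the unitary $V := 1_{\mathcal{K}} \otimes U \in B(\mathcal{K}) \ovot B(\mathcal{H}) \ovot L^\infty(\G)$, i.e. $V = U_{23}$ in leg notation. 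The claim is that $(\mathcal{K} \otimes \mathcal{H}, \Pi, \Phi, V)$ witnesses $(M \ovot A, \id \otimes \alpha) \sim_\G (N \ovot B, \id \otimes \beta)$ in the sense of Definition \ref{definition Morita}.

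Next I would verify the three required properties. First, $V$ is a $\G$-representation: since $(\id \otimes \Delta)(U) = U_{12}U_{13}$, the same relation lifted to four legs reads $(\id \otimes \id \otimes \Delta)(V) = V_{124}V_{134}$, which is the representation property on $\mathcal{K} \otimes \mathcal{H}$. Second, for $m \otimes a \in M \ovot A$,
\[
(\Pi \otimes \id)(\id \otimes \alpha)(m \otimes a) = \pi_M(m) \otimes (\pi_A \otimes \id)\alpha(a) = \pi_M(m) \otimes U(\pi_A(a) \otimes 1)U^*,
\]
which equals $V(\Pi(m \otimes a) \otimes 1)V^*$ because $V$ acts as the identity on the $\mathcal{K}$-leg. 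The analogous identity for $\Phi$, namely $(\Phi \otimes R)(\id \otimes \beta)(n \otimes b) = V^*(\Phi(n \otimes b) \otimes 1) V$, follows identically from $(\rho_B \otimes R)\beta(b) = U^*(\rho_B(b) \otimes 1)U$. Extending these identities from elementary tensors to all of $M \ovot A$ and $N \ovot B$ is standard by normality.

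Finally, I would verify the Morita conditions on $\Pi$ and $\Phi$. Faithfulness of both maps is immediate, as they are tensor products of faithful normal (anti-)representations. For the commutant condition, the Tomita tensor product commutation theorem gives
\[
\Pi(M \ovot A)' = (\pi_M(M) \ovot \pi_A(A))' = \pi_M(M)' \ovot \pi_A(A)' = \rho_N(N) \ovot \rho_B(B) = \Phi(N \ovot B),
\]
using $\pi_M(M)' = \rho_N(N)$ and $\pi_A(A)' = \rho_B(B)$. The only non-trivial ingredient is this appeal to the commutation theorem; the rest is routine three- and four-leg bookkeeping, so I expect no real obstacle.
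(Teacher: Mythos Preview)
Your proof is correct and is exactly the approach the paper takes: the paper simply asserts that $(\mathcal{H}\otimes \mathcal{K}, \pi_\mathcal{H}\otimes \pi_\mathcal{K}, \rho_\mathcal{H}\otimes \rho_\mathcal{K}, U_{23})$ is a $\G$-Morita correspondence and leaves the verifications (which you spell out) to the reader. One cosmetic remark: your leg indices in the representation identity are off --- with $V=U_{23}$ one gets $(\id\otimes\id\otimes\Delta)(V)=U_{23}U_{24}=V_{123}V_{124}$, not $V_{124}V_{134}$ --- but this does not affect the argument.
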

\begin{proof}
    Let $(\mathcal{H},\pi_\mathcal{H}, \rho_\mathcal{H}) \in \operatorname{Corr}(M,N)$ be a Morita correspondence and let $(\mathcal{K}, \pi_\mathcal{K}, \rho_\mathcal{K}, U) \in \operatorname{Corr}^\G(A ,B)$ be a $\G$-Morita correspondence. Then it is easily checked that
    $$(\mathcal{H}\otimes \mathcal{K}, \pi_\mathcal{H}\otimes \pi_\mathcal{K}, \rho_\mathcal{H}\otimes \rho_\mathcal{K}, U_{23})\in \operatorname{Corr}^\G(M\ovot A ,N \ovot B)$$
    is a $\G$-Morita correspondence.
\end{proof}

We now provide an alternative characterization of equivariant $W^*$-Morita equivalence, based on the linking algebra approach to Morita equivalence. First, let us recall that if $(E, \gamma)$ is a $\G$-$W^*$-algebra, then $E^\gamma:= \{x \in E: \gamma(x)= x \otimes 1\}$ is a von Neumann subalgebra of $E$, called the \emph{fixed point von Neumann subalgebra}. If $p\in E^\gamma$ is a projection, then $\gamma(pEp)\subseteq (pEp)\ovot L^\infty(\G)$ and $(pEp, \gamma)$ becomes a $\G$-$W^*$-algebra.

\begin{Def}
    A linking $\G$-$W^*$-algebra between $(A, \alpha)$ and $(B, \beta)$ consists of the data $(E, \gamma, p, \kappa, \lambda)$ such that $(E, \gamma)$ is a $\G$-$W^*$-algebra, $p\in E^\gamma$ is a projection such that $p,p^\perp$ are full projections of $E$ and such that $\kappa: A \to pEp$ and $\lambda: B\to p^\perp Ep^\perp$ are $\G$-equivariant isomorphisms of von Neumann algebras.
\end{Def}

If $A,B$ are von Neumann algebras and $\mcH \in \Corr(A,B)$, we define $\mathscr{L}_B(L^2(B), \mcH)\ovot L^\infty(\G)$ to be the $\sigma$-weak closure of $\mathscr{L}_B(L^2(B), \mcH)\odot L^\infty(\G)$ inside $B(L^2(B) \otimes L^2(\G), \mcH \otimes L^2(\G))$. Such tensor products will occasionally appear throughout this paper, for example in the proof of the next result.

\begin{Theorem}\label{char1} 
    The following are equivalent:
    \begin{enumerate}\setlength\itemsep{-0.5em}
        \item There exists a $\G$-Morita correspondence $(\mathcal{H}, \pi, \rho, U)\in \Corr^\G(A,B)$.
        \item There exists a linking $\G$-$W^*$-algebra $(E, \gamma,p, \kappa, \lambda)$ between $(A, \alpha)$ and $(B, \beta)$.
    \end{enumerate}
\end{Theorem}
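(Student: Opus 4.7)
The plan is to give explicit constructions in both directions, using the unitary implementation of $\G$-actions on standard form.

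For $(2)\Rightarrow(1)$, I would realize $(E,\gamma)$ on $L^2(E)$ with unitary implementation $U_\gamma \in B(L^2(E))\ovot L^\infty(\G)$. Because $p \in E^\gamma$, the formulas $(\pi_E\otimes\id)\gamma(p) = U_\gamma(\pi_E(p)\otimes 1)U_\gamma^*$ and $(\rho_E\otimes R)\gamma(p) = U_\gamma^*(\rho_E(p)\otimes 1)U_\gamma$ both simplify to tautologies, forcing $U_\gamma$ to commute with $\pi_E(p)\otimes 1$ and with $\rho_E(p^\perp)\otimes 1$, hence with the projection $\pi_E(p)\rho_E(p^\perp)\otimes 1$. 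Put $\mathcal{H} := \pi_E(p)\rho_E(p^\perp)L^2(E)$; then $U_\gamma$ restricts to a unitary $\G$-representation $U$ on $\mathcal{H}$. Defining $\pi := \pi_E\circ\kappa$ and $\rho := \rho_E\circ\lambda$, restricted to $\mathcal{H}$, the $\G$-equivariance of $\pi$ and $\rho$ follows from the equivariance of $\kappa,\lambda$ combined with the two defining formulas of $U_\gamma$. The identity $\pi(A)' = \rho(B)$ is obtained by a twofold reduction: first apply $(eMe)' = M'e$ with $M = \pi_E(E)$ and $e = \pi_E(p) \in M$ to obtain $\pi_E(pEp)' = \rho_E(E)|_{\pi_E(p)L^2(E)}$; then compress further by $\rho_E(p^\perp)$, which lies in this commutant, to get $\pi(A)' = \rho_E(p^\perp)\rho_E(E)\rho_E(p^\perp)|_\mathcal{H} = \rho_E(p^\perp E p^\perp)|_\mathcal{H} = \rho(B)$. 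Faithfulness of $\pi$ is a short argument: $\pi(a) = 0$ forces $\pi_E(\kappa(a))\rho_E(p^\perp) = 0$; commuting $\pi_E(\kappa(a))$ past $\rho_E(E)$ upgrades this to $\pi_E(\kappa(a))\rho_E([Ep^\perp E]^{\sigma\text{-weak}}) = 0$, and fullness of $p^\perp$ collapses it to $\pi_E(\kappa(a)) = 0$, whence $a = 0$. Faithfulness of $\rho$ is symmetric.

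For $(1)\Rightarrow(2)$, let $\mathcal{K} := L^2(B)\oplus\mathcal{H}$, set $\tilde\rho(b) := \rho_B(b)\oplus \rho(b)$ and $E := \tilde\rho(B)'$. A direct matrix computation identifies $E$ with $\left(\begin{smallmatrix} \pi_B(B) & X^* \\ X & \pi(A) \end{smallmatrix}\right)$, where $X = \mathscr{L}_B(L^2(B),\mathcal{H})$. The projection $p$ onto the $\mathcal{H}$-summand satisfies $pEp = \pi(A) \cong A$ and $p^\perp E p^\perp = \pi_B(B) \cong B$, yielding the isomorphisms $\kappa,\lambda$; fullness of $p$ and $p^\perp$ is precisely the density statements $[X^*X]^{\sigma\text{-weak}} = \pi_B(B)$ and $[XX^*]^{\sigma\text{-weak}} = \pi(A)$ established in Subsection \ref{corr}. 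For the action, let $U_\beta$ be the unitary implementation of $\beta$ and put $\tilde U := U_\beta\oplus U \in B(\mathcal{K})\ovot L^\infty(\G)$, which is again a unitary $\G$-representation. Define $\gamma(x) := \tilde U(x\otimes 1)\tilde U^*$; coassociativity is automatic, and $\gamma(p) = p\otimes 1$ since $\tilde U$ acts as $U$ on the $\mathcal{H}$-summand of $p = 0\oplus 1_\mathcal{H}$. The essential point is $\gamma(E) \subseteq E\ovot L^\infty(\G)$: combining the two unitary-implementation formulas into one direct sum gives
$$\tilde U^*(\tilde\rho(b)\otimes 1)\tilde U = (\tilde\rho\otimes R)\beta(b) \in \tilde\rho(B) \ovot L^\infty(\G),$$
and any element of $\tilde\rho(B)\ovot L^\infty(\G)$ commutes with $x\otimes 1$ whenever $x \in \tilde\rho(B)'$; hence $[\gamma(x), \tilde\rho(b)\otimes 1] = 0$ for all $b \in B$, placing $\gamma(x)$ in $\tilde\rho(B)'\ovot L^\infty(\G) = E\ovot L^\infty(\G)$. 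Equivariance of $\kappa,\lambda$ is then direct: $\gamma(\kappa(a)) = 0 \oplus U(\pi(a)\otimes 1)U^* = 0 \oplus (\pi\otimes\id)\alpha(a) = (\kappa\otimes\id)\alpha(a)$, and analogously for $\lambda$.

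The main obstacle I anticipate is the commutant calculation in $(2)\Rightarrow(1)$: the twofold reduction is delicate in that one must verify the second compressing projection $\rho_E(p^\perp)|_{\pi_E(p)L^2(E)}$ lies in the correct commutant and then recognize the resulting reduced algebra, via the anti-homomorphism identity $\rho_E(p^\perp)\rho_E(x)\rho_E(p^\perp) = \rho_E(p^\perp x p^\perp)$, as precisely $\rho(B)$. The equivariant bookkeeping (in particular the role of the unitary antipode $R$ on the $\rho$-side) slots in cleanly once the non-equivariant commutant picture is in place; no further substantial difficulty is anticipated.
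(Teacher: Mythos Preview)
Your proof is correct. The direction $(1)\Rightarrow(2)$ is essentially identical to the paper's argument: form the linking algebra as the commutant of the diagonal right $B$-action on $L^2(B)\oplus\mathcal{H}$ and define $\gamma$ via conjugation by $U_\beta\oplus U$. For $(2)\Rightarrow(1)$ you take a genuinely different route from the paper's main proof. The paper uses the equivalence $pEp$-$p^\perp Ep^\perp$-bimodule $X=pEp^\perp$, forms the interior tensor product $X\otimes_{p^\perp Ep^\perp} L^2(p^\perp Ep^\perp)$, and then invokes Proposition~\ref{technical} to conclude that this is a Morita correspondence; the $\G$-representation $U_\boxtimes$ is built from the restricted coaction $\delta=\gamma|_X$. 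Your approach instead cuts $L^2(E)$ by $\pi_E(p)\rho_E(p^\perp)$ and restricts $U_\gamma$ directly; this is exactly the alternative construction the paper sketches (without details) in the Remark immediately following Theorem~\ref{char1}. The tradeoff is that the paper's main proof outsources the commutant and faithfulness verifications to the general Proposition~\ref{technical}, whereas your argument is self-contained but must carry out the twofold reduction $(eMe)'=M'e$ and the fullness argument by hand. Both are clean; your version has the advantage of bypassing the interior-tensor-product machinery entirely.
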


\begin{proof}
    $(1)\implies (2)$ The argument we present here is implicitly contained in \cite{DCDR24a}*{Section 5}.
    
    Let $(\mathcal{H}, \pi, \rho, U)\in \operatorname{Corr}^\G(A,B)$ such that $\pi$ and $\rho$ are faithful and $\pi(A)'= \rho(B)$. Let
    $$X:= \mathscr{L}_B(L^2(B), \mathcal{H})= \{x\in B(L^2(B), \mathcal{H})\mid\forall b \in B: x\rho_B(b) = \rho(b)x\}$$
    and consider the von Neumann algebra 
    $$E:= \begin{pmatrix}\pi(A) & X \\ X^* &\pi_B(B)\end{pmatrix}\subseteq B\begin{pmatrix}
        \mathcal{H}\\ L^2(B)
    \end{pmatrix}.$$
    In other words, $E$ is exactly the commutant of the image of the direct sum anti-$*$-representation
    $$B \to B(\mathcal{H}\oplus L^2(B)): b \mapsto \begin{pmatrix} \rho(b) & 0 \\ 0 & \rho_B(b)
    \end{pmatrix}.$$
    Put $p:= \begin{pmatrix}1 & 0 \\ 0 & 0\end{pmatrix}\in E$ and define $U:= U\oplus U_\beta \in B(\mathcal{H}\oplus L^2(B))\ovot L^\infty(\G)$. Define then the $\G$-action
    $$\gamma: E\to E \ovot L^\infty(\G): z \mapsto U(z\otimes 1)U^*.$$ 

We can identify
$$E\ovot L^\infty(\G) \cong \begin{pmatrix}
    \pi(A) \ovot L^\infty(\G) & X \ovot L^\infty(\G)\\ X^*\ovot L^\infty(\G) & \pi_B(B)\ovot L^\infty(\G) \end{pmatrix}: \begin{pmatrix}
        x_{11} & x_{12} \\ x_{21} & x_{22} \end{pmatrix}
    \otimes g \mapsto \begin{pmatrix}
        x_{11} \otimes g& x_{12}\otimes g \\ x_{21}\otimes g & x_{22}\otimes g
    \end{pmatrix},
$$
and under this identification we see that
$$\gamma \begin{pmatrix}
        x_{11} & x_{12} \\ x_{21} & x_{22} \end{pmatrix} = \begin{pmatrix}
        U(x_{11}\otimes 1)U^* & U(x_{12}\otimes 1)U_\beta^* \\ U_\beta(x_{21}\otimes 1)U^* & U_\beta(x_{22}\otimes 1)U_\beta^* \end{pmatrix}.$$
        It is therefore clear that $p \in E^\gamma$. Further, we note that the map
        $$\kappa: A \to pEp: a\mapsto \begin{pmatrix}\pi(a) & 0 \\ 0 & 0\end{pmatrix}$$
        is a $\G$-equivariant isomorphism, since 
        \begin{align*}
            (\kappa \otimes \id)\alpha(a)&= \begin{pmatrix}(\pi\otimes \id)(\alpha(a)) & 0 \\ 0 & 0\end{pmatrix}= \begin{pmatrix}U(\pi(a)\otimes 1)U^* & 0 \\ 0 & 0\end{pmatrix}= \gamma(\kappa(a)), \quad a \in A,
        \end{align*}
so that $A \cong_\G pEp$. Similarly, $B \cong_\G p^\perp Ep^\perp.$ Using the fact that $X$ is an equivalence $A$-$B$-bimodule, straightforward matrix calculations show that $p$ and $p^\perp$ are full projections.
        
    $(2)\implies (1)$ Consider a linking $\G$-$W^*$-algebra $(E, \gamma, p, \mu, \nu)$. To simplify notation, we write $C:= pEp$ and $D:= p^\perp E p^\perp$. We note that $X:=pEp^\perp$ is an equivalence $C$-$D$-bimodule for the inner products
$$\langle x,y\rangle_{C} = xy^*, \quad \langle x,y\rangle_{D} = x^*y, \quad x,y \in X=pEp^\perp.$$
Since $p\in E^\gamma$, the coaction $\gamma: E \to E \ovot L^\infty(\G)$ restricts to a coaction $\delta: pEp^\perp \to (pEp^\perp)\ovot L^\infty(\G)$.

We then define the isometry
$$U_\boxtimes: (X\otimes_{D} L^2(D))\otimes L^2(\G) \to (X\ovot L^\infty(\G))\otimes_{D \ovot L^\infty(\G)} (L^2(D) \otimes L^2(\G))\cong (X \otimes_D L^2(D))\otimes L^2(\G)$$
by
$$U_\boxtimes((x\otimes_D \xi)\otimes \eta) = \delta(x) \otimes_{D\ovot L^\infty(\G)}U_{\gamma\vert_D}(\xi \otimes \eta), \quad x \in X, \quad \xi \in L^2(D), \quad \eta \in L^2(\G).$$
Exactly as in \cite{DCDR24a}*{Proposition 5.6}, we argue that $U_\boxtimes$ defines a unitary $\G$-representation such that 
$$(X\otimes_D L^2(D), \pi_{\boxtimes}, \rho_{\boxtimes}, U_{\boxtimes})\in \Corr^\G(C,D).$$
By Proposition \ref{technical}, it is a $\G$-Morita correspondence. Therefore, we see that
$$A \cong_\G C \sim_\G D \cong_\G B,$$
which finishes the proof.
\end{proof}

\begin{Rem}
    There is another way to prove the implication $(2)\implies (1)$ of Theorem \ref{char1}, which we now sketch. In the non-equivariant setting, the idea for the construction we present here is given in \cite{DC09}*{Section 5.5}.
    
    Indeed, let $(E, \gamma)$ be a $\G$-$W^*$-algebra and let $p\in E^\gamma$ be a projection such that $p$ and $p^\perp$ are full in $E$. Consider the $\G$-$W^*$-algebras $C:= pEp$ and $D:= p^\perp E p^\perp$. Consider the Hilbert space $$\mathcal{H}:=\pi_E(p) \rho_E(p^\perp)L^2(E).$$
   There is a natural $C$-$D$-correspondence $(\mathcal{H}, \pi, \rho)$, which can be checked to be a Morita $C$-$D$-correspondence, making use of the fullness of $p$ and $p^\perp$. Consider the unitary implementation $U_\gamma\in B(L^2(E))\ovot L^\infty(\G)$. Since $p\in E^\gamma$, $\mathcal{H}$ is invariant for the $\G$-representation $U_\gamma$ and we obtain the unitary $\G$-subrepresentation $U\in B(\mathcal{H})\ovot L^\infty(\G)$. Then it can be checked that $(\mathcal{H}, \pi, \rho, U)\in \Corr^\G(C,D)$ is a $\G$-Morita correspondence. To make the connection with the proof of Theorem \ref{char1} clear, we note that there is a unitary isomorphism
   $$X \otimes_D L^2(D)\to \mathcal{H}: x\otimes_D\xi \mapsto \pi_E(x)\xi$$
   of $\G$-$C$-$D$-correspondences, 
   where we isometrically identify $L^2(D)\subseteq L^2(E)$. 

   For concreteness, fix an nsf weight $\varphi$ on $C$ and an nsf weight $\psi$ on $D$. Then we can construct the `balanced' nsf weight
   $\theta: E^+ \to [0, \infty]$ by $\theta(x)= \varphi(pxp)+ \psi(p^\perp xp^\perp).$
   In this way, using the GNS-construction w.r.t. the weight $\theta$, we have a concrete realization of the Hilbert space $L^2(E)$. Arguing more or less as in \cite{Tak03}*{VIII.3}, we can then use this concrete realization to verify the details of the claims that were made in this remark.
\end{Rem}

Next, we give another characterization of equivariant Morita equivalence.  First, we discuss the prototypical example of an equivariant Morita equivalence:

\begin{Exa}\label{main example}
    Let $(B, \beta)$ be a $\G$-$W^*$-algebra, $p\in B$ a full projection and $P\in B \ovot L^\infty(\G)$ satisfying
    $$(\id \otimes \Delta)(P) = (P\otimes 1) (\beta \otimes \id)(P), \quad P^*P=\beta(p), \quad PP^* = p\otimes 1.$$
    Then the following statements are true:
    \begin{enumerate}\setlength\itemsep{-0.5em}
        \item The assignment  $\beta_P: pBp \to pBp\ovot L^\infty(\G): x \mapsto P\beta(x)P^*$
    defines a $\G$-action $pBp \curvearrowleft \G$. 
    \item $(B, \beta)\sim_\G (pBp, \beta_P).$
    \end{enumerate}
\end{Exa}
\begin{proof} Let us first remark that the assumption $PP^*= p\otimes 1$ implies that $(p\otimes 1)P= P$.

Statement $(1)$ follows by straightforward calculations. Define the element $$X:= (\pi_B\otimes \id)(P) U_\beta \in B(\pi_B(p)L^2(B))\ovot L^\infty(\G).$$ Then $X$ is a unitary $\G$-representation on the Hilbert space $\pi_B(p)L^2(B)$, since we have
\begin{align*}
    (\id \otimes \Delta)(X)&= (\pi_B\otimes \id\otimes \id)(\id \otimes \Delta)(P) U_{\beta, 12}U_{\beta, 13}\\
    &= (\pi_B\otimes \id)(P)_{12} (\pi_B\otimes \id \otimes \id)(\beta \otimes \id)(P) U_{\beta,12} U_{\beta, 13}\\
    &= (\pi_B\otimes \id)(P)_{12} U_{\beta, 12}(\pi_B\otimes \id)(P)_{13}U_{\beta, 12}^* U_{\beta,12}U_{\beta, 13}= X_{12} X_{13}.
\end{align*}
We then easily verify that the following data defines a $\G$-Morita correspondence on the Hilbert space $\pi_B(p)L^2(B)$:
\begin{itemize}\setlength\itemsep{-0.5em}
    \item The unital, normal $*$-representation $\pi: pBp\to B(\pi_B(p)L^2(B)): x \mapsto \pi_B(x)$.
    \item The unital, normal anti-$*$-representation $\rho: B\to B(\pi_B(p)L^2(B)): b \mapsto \rho_B(b)$.
    \item The unitary $\G$-representation $X$.
\end{itemize}
We just remark that fullness of $p$ is necessary to ensure that $\rho$ is faithful. Therefore, the statement $(2)$ follows.
\end{proof}


This example is so important because (up to an amplification), every $\G$-$W^*$-Morita equivalence arises as in the above example. To prove this, we will use the following elementary fact of modular theory: If $\mathcal{H}$ is a Hilbert space and $x\in B(\mathcal{H})$, we write $x^T\in B(\overline{\mathcal{H}})$ for the operator given by $x^T \overline{\xi}= \overline{x^*\xi}.$ We have $L^2(B(\mathcal{H}))= \overline{\mathcal{H}}\otimes \mathcal{H}$ in a way that the left standard representation is given by
$\pi_{B(\mathcal{H})}(x)= 1_{\overline{\mathcal{H}}}\otimes x$ for $x\in B(\mathcal{H})$ and such that the modular conjugation is given by $J(\overline{\xi}\otimes \eta)= \overline{\eta}\otimes \xi$ for $\xi, \eta \in \mathcal{H}$. Consequently, the standard anti-$*$-representation is given by $\rho_{B(\mathcal{H})}(x) = x^T \otimes 1_{\mathcal{H}}$ for $x\in B(\mathcal{H})$.

\begin{Theorem}\label{char2} The following are equivalent:
      \begin{enumerate}\setlength\itemsep{-0.5em}
        \item $(A, \alpha)\sim_\G (B, \beta)$.
        \item There exists a Hilbert space $\mathcal{K}$, a full projection $p\in B(\mathcal{K})\ovot B$, $P\in B(\mathcal{K})\ovot B \ovot L^\infty(\G)$ and a $*$-isomorphism $\phi: p(B(\mathcal{K})\ovot B)p\to A$ such that the following conditions are satisfied:
    \begin{enumerate}\setlength\itemsep{-0.5em}
        \item $P^*P=(\id \otimes \beta)(p)$.
        \item $PP^* = p\otimes 1$.
        \item $(\id \otimes \id \otimes \Delta)(P) = (P\otimes 1)(\id \otimes \beta \otimes \id)(P).$
        \item $\alpha(\phi(z)) = (\phi\otimes \id)(P(\id \otimes \beta)(z)P^*)$ for all $z\in p(B(\mathcal{K})\ovot B)p.$
    \end{enumerate}
    \end{enumerate}
\end{Theorem}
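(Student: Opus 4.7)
The plan is to reduce each implication to \textbf{Example \ref{main example}} applied to a suitable amplification, combined with the standard realization of an equivalence bimodule as a corner of an amplification of $B$.

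For $(2)\Rightarrow(1)$, which is the simpler direction, I would set $\widetilde{B}:=B(\mathcal{K})\ovot B$ with amplified action $\widetilde{\beta}:=\id\otimes\beta$. Conditions (a)--(c) are exactly the hypotheses of Example \ref{main example} applied to $(\widetilde{B},\widetilde{\beta})$ and the pair $(p,P)$; that example then produces a $\G$-action $\widetilde{\beta}_P$ on $p\widetilde{B}p$ together with a $\G$-$W^{*}$-Morita equivalence $(\widetilde{B},\widetilde{\beta})\sim_{\G}(p\widetilde{B}p,\widetilde{\beta}_P)$. Condition (d) states precisely that $\phi$ is a $\G$-equivariant $*$-isomorphism from $(p\widetilde{B}p,\widetilde{\beta}_P)$ onto $(A,\alpha)$. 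Since $(B,\beta)\sim_{\G}(\widetilde{B},\widetilde{\beta})$ by Proposition \ref{amplification}, chaining the three equivalences yields $(A,\alpha)\sim_{\G}(B,\beta)$.

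The interesting direction is $(1)\Rightarrow(2)$. Given a $\G$-Morita correspondence $(\mathcal{H},\pi,\rho,U)$, I would pass to the associated equivalence $A$-$B$-bimodule $X:=\mathscr{L}_B(L^2(B),\mathcal{H})$. By self-duality of Hilbert $W^{*}$-modules there exist a Hilbert space $\mathcal{K}$ and a projection $p\in B(\mathcal{K})\ovot B$ such that the canonical isomorphism $\mathcal{H}\cong X\otimes_B L^2(B)$ realizes a unitary identification $\mathcal{H}\cong p(\mathcal{K}\otimes L^2(B))$ under which the left $A$-action corresponds to the compression $p(B(\mathcal{K})\ovot B)p$; this supplies $\mathcal{K}$, $p$, and the $*$-isomorphism $\phi$. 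Fullness of $p$ follows from $[\langle X,X\rangle_B]^{\sigma\text{-weak}}=B$, and in this picture $\rho(b)$ acts as $(1\otimes\rho_B(b))\vert_{\mathcal{H}}$. Setting $V:=1_{\mathcal{K}}\otimes U_\beta$, which implements $\id\otimes\beta$ on $B(\mathcal{K})\ovot B$, and viewing $U$ as a partial isometry in $B(\mathcal{K}\otimes L^2(B))\ovot L^\infty(\G)$ with $UU^{*}=U^{*}U=p\otimes 1$, I define
\[
P:=UV^{*}.
\]
Conditions (a) and (b) are then immediate from $UU^{*}=p\otimes 1$ and $V(p\otimes 1)V^{*}=(\id\otimes\beta)(p)$; (d) follows from $V(z\otimes 1)V^{*}=(\id\otimes\beta)(z)$ together with the intertwining property of $U$; and (c) reduces to a short leg calculation using that both $U$ and $U_\beta$ are $\G$-representations, with the internal copies of $V_{123}^{*}V_{123}=1$ producing the expected cancellation.

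The main obstacle is to verify that $P$ genuinely lies in $B(\mathcal{K})\ovot B\ovot L^\infty(\G)$, and not merely in the ambient $B(\mathcal{K})\ovot B(L^2(B))\ovot L^\infty(\G)$. I would establish this by showing that $P$ commutes with $1\otimes\rho_B(b)\otimes 1$ for every $b\in B$, combining the equivariance $(\rho\otimes R)\beta(b)=U^{*}(\rho(b)\otimes 1)U$ of the Morita correspondence (extended to the ambient space by noting that the relevant operators all preserve the range of $p\otimes 1$) with the defining property $U_\beta^{*}(\rho_B(b)\otimes 1)U_\beta=(\rho_B\otimes R)\beta(b)$ of the unitary implementation of $\beta$. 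Together these place $P$ inside $(1\otimes\rho_B(B)\otimes 1)'\cap \bigl(B(\mathcal{K})\ovot B(L^2(B))\ovot L^\infty(\G)\bigr)=B(\mathcal{K})\ovot B\ovot L^\infty(\G)$, as required.
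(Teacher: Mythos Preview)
Your proof is correct. The direction $(2)\Rightarrow(1)$ is essentially identical to the paper's.

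For $(1)\Rightarrow(2)$ you take a genuinely different route. The paper composes the given Morita correspondence $\mathcal{H}$ with the auxiliary $\G$-$B$-$C$-correspondence $\overline{\ell^2(I)}\otimes L^2(B)$ (where $C=B(\ell^2(I))\ovot B$), recognizes the result as $\pi_C(p)L^2(C)$, and then reads off $P$ by applying the linking-algebra action of Theorem~\ref{char1} to the off-diagonal matrix unit $\begin{pmatrix}0&\pi_C(p)\\0&0\end{pmatrix}$; in that picture $P$ lands in $C\ovot L^\infty(\G)$ automatically. You instead work directly on $\mathcal{K}\otimes L^2(B)$, set $P:=\tilde{U}(1\otimes U_\beta)^*$, and verify (a)--(d) by short unitary calculations. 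This is more elementary and bypasses both Theorem~\ref{char1} and the auxiliary $\boxtimes$-product, but the price is that you must check by hand that $P$ lies in $B(\mathcal{K})\ovot B\ovot L^\infty(\G)$ rather than merely in $B(\mathcal{K})\ovot B(L^2(B))\ovot L^\infty(\G)$. Your commutant argument for this is correct: combine $U_\beta^*(\rho_B(b)\otimes 1)=(\rho_B\otimes R)\beta(b)\,U_\beta^*$ with $U(\rho\otimes R)\beta(b)=(\rho(b)\otimes 1)U$, noting that $1\otimes(\rho_B\otimes R)\beta(b)$ commutes with the projection $p\otimes 1$ so the partial-isometry extension of $U$ causes no trouble. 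One small point worth making explicit in a write-up is that $\tilde{U}=(\iota\otimes\id)U(\iota^*\otimes\id)$ indeed sits in $B(\mathcal{K}\otimes L^2(B))\ovot L^\infty(\G)$, so that the commutant computation really takes place inside $B(\mathcal{K})\ovot B(L^2(B))\ovot L^\infty(\G)$.
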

\begin{proof} $(1)\implies (2)$ Let $(\mathcal{H}, \pi, \rho, U)\in \operatorname{Corr}^\G(A,B)$ be a $\G$-Morita correspondence. There exists a set $I$ and a projection $p\in B(\ell^2(I))\ovot B$ such that 
$$(\mathcal{H}, \rho)\cong ((\id \otimes \pi_B)(p)(\ell^2(I)\otimes L^2(B)), \tilde{\rho})$$
as right $B$-representations, where $\tilde{\rho}(b) = 1\otimes \rho_B(b)$ for $b\in B$.

Put $C:= B(\ell^2(I))\ovot B$, which we view as a $\G$-$W^*$-algebra for the amplified $\G$-action $\id \otimes \beta: C \to C \ovot L^\infty(\G)$. We prove now that $p\in C$ is full. Consider the two-sided $\sigma$-weakly closed ideal
$J = [CpC]^{\sigma\text{-weak}}.$
There is a central projection $r\in B$ such that $J = (1\otimes r)C$. We have
$$\tilde{\rho}(r^\perp)(\id \otimes \pi_B)(p)= (1\otimes \rho_B(r^\perp))(\id \otimes \pi_B)(p)= (\id \otimes \pi_B)((1\otimes r^\perp)p)= 0,$$
so that by faithfulness of $\tilde{\rho}$ we find that $r^\perp= 0$, so that $p$ is full in $C$.

Consider the $\G$-Morita-correspondence $\overline{\ell^2(I)}\otimes L^2(B)\in \Corr^\G(B,C)$ defined by
    \begin{itemize}\setlength\itemsep{-0.5em}
        \item the normal $*$-representation $\pi'(b) = 1_{\overline{\ell^2(I)}}\otimes \pi_B(b), \quad b \in B,$
        \item the normal anti-$*$-representation $\rho'(z) = ({(-)}^T \otimes \rho_B)(z), \quad z \in C,$
        \item the unitary $\G$-representation $U_{\beta, 23}\in B(\overline{\ell^2(I)}\otimes L^2(B))\ovot L^\infty(\G)$.
    \end{itemize}
We find
\begin{align*}
    \mathcal{H}\boxtimes_B (\overline{\ell^2(I)}\otimes L^2(B)) &\cong \overline{\ell^2(I)}\otimes \mathcal{H}\\
    &\cong \overline{\ell^2(I)}\otimes (\id \otimes \pi_B)(p)(\ell^2(I)\otimes L^2(B))\\
    &= (\pi_{B(\ell^2(I))}\otimes \pi_B)(p)(\overline{\ell^2(I)}\otimes \ell^2(I)\otimes L^2(B))\\
    &= \pi_{B(\ell^2(I))\ovot B}(p)L^2(B(\ell^2(I))\ovot B) = \pi_C(p)L^2(C)
\end{align*}
as right $C$-representations. Therefore, by structure transport, the Hilbert space $\mathcal{G}:=\pi_C(p)L^2(C)$ with its natural right $C$-action is endowed with the structure of a $\G$-$A$-$C$-Morita correspondence. The fact that $\mathcal{G}$ becomes a $\G$-$A$-$C$-Morita correspondence then implies that 
$$A \cong \pi_\mathcal{G}(A)= \rho_\mathcal{G}(C)' = (\pi_C(p) \pi_C(C) \pi_C(p))\vert_{\pi_C(p)L^2(C)}\cong \pi_C(pCp)\cong pCp,$$
so we find a $*$-isomorphism $\phi: pCp \to A$.

Consider the linking $\G$-$W^*$-algebra $E:=\mathscr{L}_C(\mathcal{G}\oplus L^2(C))$ between $(A, \alpha)$ and $(C, \id \otimes \beta)$ (see the proof of Theorem \ref{char1}) endowed with its natural $\G$-action $\gamma: E\to E\ovot L^\infty(\G)$
and the natural $\G$-equivariant embeddings
$$\kappa: A \to E: a \mapsto \begin{pmatrix}
    \pi_\mathcal{G}(a) & 0 \\ 0 & 0
\end{pmatrix}, \quad \lambda: C \to E: c \mapsto \begin{pmatrix}
    0 & 0 \\ 0 & \pi_C(c)
\end{pmatrix}.$$
Evidently, we can view
\begin{align*}
    \mathscr{L}_C(\mathcal{G}\oplus L^2(C))= \begin{pmatrix}
        \pi_C(pCp) & \pi_C(pC) \\ \pi_C(Cp) & \pi_C(C)
    \end{pmatrix},
\end{align*}
 where the right hand side acts naturally on $\mathcal{G}\oplus L^2(C)$. We note that 
$$ \gamma\begin{pmatrix}
    0 & \pi_C(p) \\ 0 & 0
\end{pmatrix}  = \begin{pmatrix}
    0 & (\pi_C\otimes \id)(P) \\ 0 & 0
\end{pmatrix}$$
for some element $P\in C \ovot L^\infty(\G)$ satisfying $P=(p\otimes 1)P$. Then note that 
$$\begin{pmatrix}
    0 & (\pi_C\otimes \id)(P) \\ 0 & 0
\end{pmatrix}^* \begin{pmatrix}
    0 & (\pi_C\otimes \id)(P) \\ 0 & 0
\end{pmatrix} = \gamma \begin{pmatrix}
    0 & 0 \\ 0 & \pi_C(p)
\end{pmatrix}= \begin{pmatrix}
    0 & 0 \\ 0 & (\pi_C \otimes \id)((\id \otimes \beta)(p))
\end{pmatrix}$$
implies the property $(a)$. Similarly, property $(b)$ is proven. Further,
\begin{align*}
    &\begin{pmatrix}
        0 & (\pi_C\otimes \id \otimes \id)((P\otimes 1)(\id \otimes \beta \otimes \id)(P))\\
        0 & 0
    \end{pmatrix}\\
    &= \begin{pmatrix}
        0 & (\pi_C\otimes \id)(P)\otimes 1\\ 0 & 0
    \end{pmatrix} \begin{pmatrix}
        0 & 0 \\ 0 &
         (\pi_C\otimes \id \otimes \id)(\id \otimes \beta \otimes \id)(P)
    \end{pmatrix}\\
    &= \left(\gamma\begin{pmatrix}
        0 & \pi_C(p)\\ 0 & 0
    \end{pmatrix}\otimes 1\right)(\gamma\otimes \id) \begin{pmatrix}
        0 & 0 \\ 0 & (\pi_C\otimes \id)(P)
    \end{pmatrix}\\
    &= (\gamma \otimes \id)\begin{pmatrix}
        0 & (\pi_C\otimes \id)(P) \\ 0 & 0
    \end{pmatrix}\\
    &= (\id \otimes \Delta)\begin{pmatrix}
        0 & (\pi_C\otimes \id)(P) \\ 0 & 0
    \end{pmatrix}= \begin{pmatrix}
        0 & (\pi_C\otimes \id \otimes \id)(\id_C \otimes \Delta)(P)\\
        0 & 0
    \end{pmatrix},
\end{align*}
from which $(c)$ follows. Next, note that for $c\in C$, 
\begin{align*}
    (\kappa \otimes \id)\alpha(\phi(pcp))&=  \gamma\begin{pmatrix}
        \pi_C(pcp) & 0 \\ 0 & 0
    \end{pmatrix}\\
    &=\gamma\left(\begin{pmatrix}
        0 & \pi_C(p) \\ 0 & 0
    \end{pmatrix} \begin{pmatrix}
        0 & 0 \\ 0 & \pi_C(c)
    \end{pmatrix} \begin{pmatrix}
        0 & 0 \\ \pi_C(p) & 0
    \end{pmatrix}\right)\\
    &= \begin{pmatrix}
        0 & (\pi_C\otimes \id)(P)\\ 0 & 0
    \end{pmatrix} \begin{pmatrix}
        0 & 0 \\ 0 & (\pi_C \otimes \id)((\id \otimes \beta)(c))
    \end{pmatrix} \begin{pmatrix}
        0 & 0 \\ (\pi_C\otimes \id)(P^*) & 0
    \end{pmatrix}\\
    &= \begin{pmatrix}
        (\pi_C\otimes \id)(P(\id \otimes \beta)(c)P^*) & 0 \\
        0 & 0
    \end{pmatrix}\\
    &= (\kappa \otimes \id)(\phi\otimes \id)(P(\id \otimes \beta)(c)P^*),
\end{align*}
from which $(d)$ follows. 

$(2)\implies (1)$ We have
$$(B, \beta) \sim_\G (B(\mathcal{K})\ovot B, \id \otimes \beta) \sim_\G (p(B(\mathcal{K})\ovot B)p, (\id \otimes \beta)_P) \stackrel{\phi}{\cong}_\G (A, \alpha),$$
where the first $\G$-$W^*$-Morita equivalence follows from Proposition \ref{amplification} and the second $\G$-$W^*$-Morita equivalence follows from Example \ref{main example}.
\end{proof}

We also record the following elementary fact:
\begin{Prop}\label{crossed product}
Let $\G$ be a locally compact quantum group and let $(A, \alpha)$ and $(B, \beta)$ be two $\G$-$W^*$-algebras.
    \begin{enumerate}\setlength\itemsep{-0.5em}
        \item $(A, \alpha)\sim_\G (B, \beta) \iff (A\rtimes_\alpha \G, \id \otimes \check{\Delta}_r)\sim_{\check{\G}}(B\rtimes_\beta \G, \id \otimes \check{\Delta}_r).$
        \item $(A, \alpha)\sim_\G (B, \beta) \implies (A\rtimes_\alpha \G, \id \otimes \Delta_r)\sim_{\G}(B\rtimes_\beta \G, \id \otimes \Delta_r).$
    \end{enumerate}
    Moreover, $\alpha: A \curvearrowleft \G$ is inner\footnote{Recall that $\alpha: A\curvearrowleft \G$ is called inner if there exists a unitary $\G$-representation $U\in A \ovot L^\infty(\G)$ such that $\alpha(a)= U(a\otimes 1)U^*$ for all $a\in A$.} if and only if $(A, \alpha)\sim_\G (A, \tau)$.
\end{Prop}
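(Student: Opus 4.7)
For (1), I will use the linking algebra characterization of Theorem \ref{char1}. Starting from a linking $\G$-$W^*$-algebra $(E, \gamma, p, \kappa, \lambda)$ between $(A, \alpha)$ and $(B, \beta)$, I form the $\check{\G}$-$W^*$-algebra $(E \rtimes_\gamma \G, \id \otimes \check{\Delta}_r)$. Since $p \in E^\gamma$, the element $\gamma(p) = p \otimes 1$ is a projection lying in the fixed-point algebra $\gamma(E) = (E \rtimes_\gamma \G)^{\id \otimes \check{\Delta}_r}$. To see that $\gamma(p)$ remains full in the crossed product, I observe that the two-sided ideal it generates contains $\gamma(EpE)(1 \otimes L^\infty(\check{\G}))$; normality of $\gamma$ and $[EpE]^{\sigma\text{-weak}} = E$ yield $\sigma$-weak density in $\gamma(E)(1 \otimes L^\infty(\check{\G}))$ and hence in all of $E \rtimes_\gamma \G$. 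The corner $\gamma(p)(E \rtimes_\gamma \G)\gamma(p)$ is $\check{\G}$-equivariantly isomorphic to $pEp \rtimes_{\gamma|_{pEp}} \G \cong A \rtimes_\alpha \G$, and symmetrically for $p^\perp$, producing a linking $\check{\G}$-$W^*$-algebra witnessing $(A \rtimes_\alpha \G) \sim_{\check{\G}} (B \rtimes_\beta \G)$. For the converse, I will apply the forward implication with $\check{\G}$ in place of $\G$ to obtain a $\G$-Morita equivalence of the double crossed products (using $\check{\check{\G}} = \G$), and then invoke the Takesaki--Takai biduality isomorphism identifying $(A \rtimes_\alpha \G) \rtimes \check{\G} \cong A \ovot B(L^2(\G))$ as $\G$-$W^*$-algebras. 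The resulting amplified $\G$-action on $A \ovot B(L^2(\G))$ will be $\G$-Morita equivalent to $(A, \alpha)$ by Proposition \ref{amplification}, and analogously for $B$, yielding $(A, \alpha) \sim_\G (B, \beta)$.

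For (2), the same linking algebra argument applies, now equipping $E \rtimes_\gamma \G$ with the $\G$-action $\id \otimes \Delta_r$. One first verifies this is a well-defined $\G$-action on the crossed product by testing on generators: $\Delta_r$ restricts to $\Delta$ on $L^\infty(\G)$ (so $(\id \otimes \Delta_r)\gamma(E) \subseteq \gamma(E) \ovot L^\infty(\G)$ via the coaction identity for $\gamma$), while $\Delta_r$ keeps $L^\infty(\check{\G})$ in $L^\infty(\check{\G}) \ovot L^\infty(\G)$. Since $\gamma(p) = p \otimes 1$ and $\Delta_r(1) = 1 \otimes 1$, the projection $\gamma(p)$ is fixed by this $\G$-action, and fullness was already established in (1).

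For the ``moreover'' clause, in the forward direction, given a unitary $\G$-representation $U \in A \ovot L^\infty(\G)$ implementing $\alpha$, I set $V := (\pi_A \otimes \id)(U) \in \pi_A(A) \ovot L^\infty(\G)$. This $V$ is a unitary $\G$-representation commuting with $\rho_A(A) \otimes 1$ (since $\rho_A(A) = \pi_A(A)'$), and a direct computation yields $V(\pi_A(a) \otimes 1)V^* = (\pi_A \otimes \id)\alpha(a)$, making $(L^2(A), \pi_A, \rho_A, V)$ a $\G$-Morita correspondence witnessing $(A, \alpha) \sim_\G (A, \tau)$. Conversely, given any $\G$-Morita correspondence $(\mathcal{H}, \pi, \rho, V)$ between $(A, \alpha)$ and $(A, \tau)$, the triviality condition on the $\rho$-side forces $V \in \rho(A)' \ovot L^\infty(\G) = \pi(A) \ovot L^\infty(\G)$, so $V = (\pi \otimes \id)(W)$ for a unique $W \in A \ovot L^\infty(\G)$; injectivity of $\pi$ then transfers the coaction property and the implementing identity from $V$ to $W$, showing that $\alpha$ is inner.

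The main obstacle will be the reverse direction of (1): carrying out the Takesaki--Takai identification in a $\G$-equivariant manner and verifying that the resulting $\G$-action on $A \ovot B(L^2(\G))$ is indeed $\G$-Morita equivalent to $\alpha$ via the canonical amplification; the remaining steps amount to checking coaction and fullness conditions by transporting through $\gamma$.
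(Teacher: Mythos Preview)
Your proposal is correct, but takes routes genuinely different from the paper's at every point.

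For (1), the paper simply cites \cite{DR24b}*{Proposition 2.1}; your linking-algebra argument for the forward direction is a valid self-contained alternative. For the reverse direction, your biduality idea is right, but be aware that the obstacle you flag is real in a specific way: the Takesaki--Takai bidual $\G$-action on $A \ovot B(L^2(\G))$ is \emph{not} literally the amplification $\id \otimes \alpha$; it differs from it by a unitary $\alpha$-cocycle. Hence Proposition~\ref{amplification} alone will not close the argument---you will need Example~\ref{main example} with $p=1$ (cocycle-perturbed actions are $\G$-Morita equivalent) to pass from the bidual action to the honest amplification, and only then apply Proposition~\ref{amplification}.

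For (2), the paper proceeds quite differently: it first proves the ``moreover'' clause and then uses the chain
\[
(A\rtimes_\alpha \G, \id \otimes \Delta_r) \sim_\G (A\rtimes_\alpha \G, \tau) \sim_\G (B\rtimes_\beta \G, \tau) \sim_\G (B\rtimes_\beta \G, \id \otimes \Delta_r),
\]
observing that $\id\otimes\Delta_r$ is inner (it is implemented by $1\otimes V \in (A\rtimes_\alpha\G)\ovot L^\infty(\G)$), so the outer equivalences come from the ``moreover'' clause, while the middle one comes from (1) forgotten to a non-equivariant Morita equivalence. Your direct linking-algebra argument is more uniform with your proof of (1) and avoids this detour; both work.

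For the converse of the ``moreover'' clause, the paper invokes Theorem~\ref{char2} with $(B,\beta)=(A,\tau)$: triviality of $\beta$ forces $P^*P = PP^* = p\otimes 1$ and turns the cocycle identity into a genuine corepresentation identity, and transporting through $\phi$ exhibits $\alpha$ as inner. Your argument---using the $\rho$-compatibility to force $V \in \rho(A)'\ovot L^\infty(\G) = \pi(A)\ovot L^\infty(\G)$ and then pulling back through $\pi$---is more elementary and bypasses the machinery of Theorem~\ref{char2} entirely.
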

\begin{proof} The equivalence in $(1)$ is proven in \cite{DR24b}*{Proposition 2.1}.

If $\alpha: A \curvearrowleft \G$ is inner, then from \cite{DR24a}*{Lemma 6.1}, we know that $A \rtimes_\alpha \G \cong_{\check{\G}} A \ovot L^\infty(\check{\G}) = A \rtimes_\tau \G$. By $(1)$, we see that $(A, \alpha)\sim_{\G} (A, \tau)$. Alternatively, if $\alpha: A \curvearrowleft \G$ is inner, there exists a unitary $U\in A \ovot L^\infty(\G)$ such that $(\id \otimes \Delta)(U) = U_{12}U_{13}$ and $\alpha(a)= U(a\otimes 1)U^*$ for $a\in A$. Then $(L^2(A), \pi_A, \rho_A, (\pi_A\otimes \id)(U))$ is an explicit $\G$-Morita correspondence between $(A, \alpha)$ and $(A, \tau)$. Conversely, if $(A, \alpha)\sim_\G (A, \tau)$, it is an immediate consequence of Theorem \ref{char2} (apply it with $(B, \beta)= (A, \tau)$) that $(A, \alpha)$ is inner. 

Finally, if $(A, \alpha)\sim_\G (B, \beta)$, then 
$$(A\rtimes_\alpha \G, \id \otimes \Delta_r) \sim_\G (A\rtimes_\alpha \G, \tau) \sim_\G (B\rtimes_\beta \G, \tau) \sim_\G (B\rtimes_\beta \G, \id \otimes \Delta_r),$$
so that $(2)$ also follows.
\end{proof}

We end this section by considering important classes of examples of equivariant $W^*$-Morita equivalences.

Let us assume that $\mathbb{H}$ is a \emph{(Vaes-)closed quantum subgroup} of $\G$ \cite{DKSS12}, i.e.\ $\mathbb{H}$ is a locally compact quantum group and there exists a unital, normal, faithful $*$-homomorphism $\check{\gamma}: L^\infty(\check{\mathbb{H}})\to L^\infty(\check{\G})$ such that $(\check{\gamma}\otimes \check{\gamma})\circ \check{\Delta}^{\mathbb{H}}= \check{\Delta}^{\mathbb{G}}\circ \check{\gamma}.$ We also define the faithful normal $*$-homomorphism
$$\hat{\gamma}: L^\infty(\hat{\mathbb{H}})\to L^\infty(\hat{\G}): x \mapsto \check{J}_\G\check{\gamma}(\check{J}_{\mathbb{H}}x^* \check{J}_{\mathbb{H}})^*\check{J}_\G$$
satisfying $(\hat{\gamma}\otimes \hat{\gamma})\circ \hat{\Delta}^{\mathbb{H}} = \hat{\Delta}^\G \circ \hat{\gamma}.$ Define the unitaries
$$V_{\G, \mathbb{H}}:= (\check{\gamma} \otimes \id)(V_\mathbb{H})\in L^\infty(\check{\G})\bar{\otimes} L^\infty(\mathbb{H}), \quad W_{\mathbb{H}, \G}:=(\id \otimes \hat{\gamma})(W_{\mathbb{H}}) \in L^\infty(\mathbb{H})\ovot L^\infty(\hat{\G})$$
and the normal $*$-homomorphisms
\begin{align*}
    &\Delta_r^{\G, \mathbb{H}}: B(L^2(\G))\to B(L^2(\G))\bar{\otimes}L^\infty(\mathbb{H}): x \mapsto V_{\G, \mathbb{H}}(x\otimes 1)V_{\G, \mathbb{H}}^*\\
    &\Delta_l^{\mathbb{H}, \G}: B(L^2(\G))\to L^\infty(\mathbb{H})\ovot B(L^2(\G)): x \mapsto W_{\mathbb{H}, \G}^*(1\otimes x)W_{\mathbb{H}, \G}.
\end{align*}

 The map $\Delta_r^{\G, \mathbb{H}}$ defines an action $B(L^2(\G))\curvearrowleft \mathbb{H}$ and the map $\Delta_l^{\mathbb{H}, \G}$ defines an action $\mathbb{H}\curvearrowright B(L^2(\G))$. These maps restrict to coactions
 $$\Delta^{\G, \mathbb{H}}: L^\infty(\G)\to L^\infty(\G)\ovot L^\infty(\mathbb{H}), \quad \Delta^{\mathbb{H}, \G}: L^\infty(\G)\to L^\infty(\mathbb{H}) \ovot L^\infty(\G).$$
 Let us also recall (see e.g.\ \cite{DC09}*{Definition 6.5.3}) that if $(A, \alpha)$ is a $\G$-$W^*$-algebra, then there is a unique $\mathbb{H}$-action $\alpha_\mathbb{H}: A \to A \ovot L^\infty(\mathbb{H})$ such that 
 $$(\alpha \otimes \id)\circ \alpha_{\mathbb{H}}= (\id \otimes \Delta^{\G, \mathbb{H}})\circ\alpha.$$
We refer to the action $\alpha_{\mathbb{H}}: A \curvearrowleft \mathbb{H}$ as the \emph{restriction} of the action $\alpha: A \curvearrowleft \G$ to the closed quantum subgroup $\mathbb{H}$.
\begin{Prop}
    Let $(A, \alpha)$ and $(B, \beta)$ be $\G$-$W^*$-algebras and let $\mathbb{H}$ a closed quantum subgroup of $\G$. If $(A, \alpha)\sim_\G (B,\beta)$, then also $(A, \alpha_{\mathbb{H}})\sim_{\mathbb{H}} (B, \beta_{\mathbb{H}})$.
\end{Prop}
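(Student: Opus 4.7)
The plan is to realize the equivariant Morita equivalence via its linking algebra description (Theorem \ref{char1}) and to observe that restricting a $\G$-action to an $\mathbb{H}$-action is functorial in a way that preserves every ingredient of a linking algebra. Concretely, starting from a linking $\G$-$W^*$-algebra $(E,\gamma,p,\kappa,\lambda)$ between $(A,\alpha)$ and $(B,\beta)$ produced by Theorem \ref{char1}, I would form the restricted $\mathbb{H}$-action $\gamma_{\mathbb{H}}:E\to E\ovot L^\infty(\mathbb{H})$ characterized by $(\gamma\otimes \id)\circ \gamma_{\mathbb{H}}= (\id \otimes \Delta^{\G,\mathbb{H}})\circ \gamma$, and claim that $(E,\gamma_{\mathbb{H}},p,\kappa,\lambda)$ is a linking $\mathbb{H}$-$W^*$-algebra between $(A,\alpha_{\mathbb{H}})$ and $(B,\beta_{\mathbb{H}})$. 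Applying Theorem \ref{char1} in the opposite direction, with $\mathbb{H}$ in place of $\G$, then yields $(A,\alpha_{\mathbb{H}})\sim_{\mathbb{H}}(B,\beta_{\mathbb{H}})$.

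Three things need to be checked. First, $p\in E^{\gamma_{\mathbb{H}}}$: since $\gamma(p)=p\otimes 1$, applying $\id\otimes \Delta^{\G,\mathbb{H}}$ gives $(\gamma\otimes \id)(\gamma_{\mathbb{H}}(p))= p\otimes 1\otimes 1= (\gamma \otimes \id)(p\otimes 1)$, and injectivity of $\gamma\otimes \id$ forces $\gamma_{\mathbb{H}}(p)=p\otimes 1$. Second, fullness of $p$ and $p^\perp$ in $E$ is a property of the von Neumann algebra alone, so it is unaffected by passing from $\gamma$ to $\gamma_{\mathbb{H}}$. Third, the isomorphisms $\kappa$ and $\lambda$ remain equivariant with respect to the restricted actions; for $\kappa$ this is proved by the chain
\begin{align*}
(\gamma \otimes \id)\bigl((\kappa\otimes \id)\circ \alpha_{\mathbb{H}}(a)\bigr)
&= (\kappa\otimes \id \otimes \id)(\alpha\otimes \id)\alpha_{\mathbb{H}}(a)\\
&= (\kappa\otimes \id\otimes \id)(\id \otimes \Delta^{\G,\mathbb{H}})\alpha(a)\\
&= (\id \otimes \Delta^{\G,\mathbb{H}})\gamma(\kappa(a)) = (\gamma\otimes \id)\circ \gamma_{\mathbb{H}}\circ \kappa(a),
\end{align*}
after which injectivity of $\gamma\otimes \id$ produces the desired identity $(\kappa\otimes \id)\circ \alpha_{\mathbb{H}}=\gamma_{\mathbb{H}}\circ \kappa$. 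The same computation, with $\beta$ in place of $\alpha$ and $\lambda$ in place of $\kappa$, handles $\lambda$.

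I do not expect any real obstacle: the argument is essentially just the naturality of the restriction construction, with injectivity of equivariant *-homomorphisms doing all the work. The only subtlety worth flagging is that one must invoke the well-definedness of $\gamma_{\mathbb{H}}$ as an $\mathbb{H}$-action (so that the implicit equation genuinely defines it), which is the content of \cite{DC09}*{Definition 6.5.3} and is assumed here. A direct approach using a $\G$-Morita correspondence $(\mathcal{H},\pi,\rho,U)$ is also possible, but then one would have to build an explicit unitary $\mathbb{H}$-representation $U_{\mathbb{H}}$ implementing the restricted actions, which seems more technical; the linking algebra viewpoint of Theorem \ref{char1} bypasses this entirely.
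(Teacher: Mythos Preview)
Your proof is correct and complete; the three verifications are carried out carefully, and injectivity of $\gamma\otimes\id$ is precisely what is needed to conclude each step. However, your route differs from the paper's. The paper works directly at the level of the Morita correspondence: starting from $(\mathcal{H},\pi,\rho,U)\in\Corr^\G(A,B)$, it constructs the restricted unitary $U_{\mathbb{H}}\in B(\mathcal{H})\ovot L^\infty(\mathbb{H})$ via the defining relation $(\id\otimes\Delta^{\G,\mathbb{H}})(U)=U_{12}U_{\mathbb{H},13}$ and then checks that $(\mathcal{H},\pi,\rho,U_{\mathbb{H}})$ is an $\mathbb{H}$-Morita correspondence, using in particular the compatibility $\Delta^{\mathbb{H},\G}\circ R_\G=(R_\mathbb{H}\otimes R_\G)\circ\sigma\circ\Delta^{\G,\mathbb{H}}$ to handle the anti-representation $\rho$. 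So the paper does exactly the ``more technical'' approach you flagged and set aside. Your linking-algebra argument trades that explicit construction of $U_\mathbb{H}$ and the antipode identity for the functoriality of restriction together with Theorem~\ref{char1}; this is arguably cleaner, since the antipode compatibility is absorbed into the machinery of Theorem~\ref{char1} rather than verified by hand. The paper's approach, on the other hand, yields an explicit $\mathbb{H}$-Morita correspondence on the same Hilbert space without passing through the linking algebra.
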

\begin{proof}
Let $(\mathcal{H}, \pi, \rho, U)\in \Corr^\G(A,B)$ be a $\G$-Morita correspondence. Consider the restriction $U_\mathbb{H}\in B(\mathcal{H})\ovot L^\infty(\mathbb{H})$ (see e.g.\ \cite{DC09}*{Section 6.5}) uniquely determined by
$$(\id \otimes \Delta^{\G, \mathbb{H}})(U) = U_{12} U_{\mathbb{H},13}, \quad (\id \otimes \Delta^{\mathbb{H}, \G})(U) = U_{\mathbb{H},12} U_{13}.$$ 
Using the identity
$$\Delta^{\mathbb{H}, \G}(R_\G(x))= (R_\mathbb{H}\otimes R_\G) (\Delta^{\G, \mathbb{H}}(x)_{21}), \quad x \in L^\infty(\G),$$
routine verifications show that $(\mathcal{H}, \pi, \rho, U_\mathbb{H}) \in \Corr^\mathbb{H}(A,B)$ defines a $\mathbb{H}$-Morita correspondence. 
\end{proof}

 Fix now a $\mathbb{H}$-$W^*$-algebra $(A, \alpha)$. Consider the canonical $\check{\G}$-action on the crossed product $A\rtimes_\alpha \mathbb{H}$ given by
 $$\delta:= (\id \otimes \id \otimes \check{\gamma})\circ (\id \otimes \check{\Delta}_r^{\mathbb{H}}): A\rtimes_\alpha \mathbb{H}\to (A\rtimes_\alpha \mathbb{H})\ovot L^\infty(\check{\G}).$$

 We also define the cotensor product 
   $$A \overset{\mathbb{H}}{\square} L^\infty(\G):= \{z \in A \ovot L^\infty(\G): (\alpha \otimes \id)(z) = (\id \otimes \Delta^{\mathbb{H}, \G})(z)\}.$$

Slicing with functionals in $L^1(\G)$ and using that $(\id \otimes \Delta^\G)\Delta^{\mathbb{H}, \G}= (\Delta^{\mathbb{H}, \G}\otimes \id)\Delta^\G$ shows that $$(\id \otimes \Delta^\G)(A \overset{\mathbb{H}}{\square} L^\infty(\G))\subseteq (A \overset{\mathbb{H}}{\square} L^\infty(\G))\ovot L^\infty(\G),$$ so that $A \overset{\mathbb{H}}{\square} L^\infty(\G)$ becomes a $\G$-$W^*$-algebra. We refer to $(A \overset{\mathbb{H}}{\square} L^\infty(\G), \id \otimes \Delta^\G)$ as the \emph{induction} of $(A, \alpha)$.

The following is contained in \cite{Vae05}*{Section 7} in the $C^*$-algebra context.
 \begin{Exa}\label{example closed}
     $(A\rtimes_{\alpha}\mathbb{H}, \delta)\sim_{\check{\G}} ((A \overset{\mathbb{H}}{\square} L^\infty(\G)) \rtimes_{\id \otimes \Delta^\G} \G, \id \otimes \id \otimes \check{\Delta}_r^\G).$

To prove this, we can simply provide an explicit $\check{\G}$-Morita correspondence on the Hilbert space $L^2(A)\otimes L^2(\G)$, namely, consider the following data:
\begin{itemize}\setlength\itemsep{-0.5em}
    \item The $*$-representation $\pi= \pi_A\otimes (\Delta_l^{\G})^{-1}: (A \overset{\mathbb{H}}{\square} L^\infty(\G))\rtimes_{\id \otimes \Delta^\G}\G \to B(L^2(A)\otimes L^2(\G)),$
    \item the anti-$*$-representation $\rho: A\rtimes_\alpha \mathbb{H}\to B(L^2(A)\otimes L^2(\G)): z \mapsto (\id \otimes \hat{\gamma})(U_\alpha(\rho_A\otimes \check{J}_{\mathbb{H}}(-)^*\check{J}_{\mathbb{H}})(z)U_\alpha^*),$
    \item the unitary $\check{\G}$-representation $\check{V}_{\G,23}\in B(L^2(A)\otimes L^2(\G))\ovot L^\infty(\check{\G})$.
\end{itemize}

The only non-obvious thing to check in the definition of $\check{\G}$-Morita correspondence is that the images of $\pi$ and $\rho$ are each other's commutant. To prove this, we can make use of the argument implicitly provided in \cite{Vae05}*{Section 7}, which we now sketch. Put
  $$X:= \{x \in A \ovot B(L^2(\mathbb{H}), L^2(\G)): (\alpha\otimes \id)(x) = W_{\mathbb{H}, \G, 23}^*x_{13}W_{\mathbb{H}, 23}\}.$$
 It can then be checked that
 $$(\id \otimes \Delta_l^\G)([XX^*]^{\sigma\text{-weak}})= (A \overset{\mathbb{H}}{\square} L^\infty(\G))\rtimes \G, \quad [X^*X]^{\sigma\text{-weak}}= A\rtimes_\alpha \mathbb{H}.$$
 Therefore, $X$ becomes an equivalence $(A \overset{\mathbb{H}}{\square} L^\infty(\G))\rtimes \G$-$A\rtimes_\alpha \mathbb{H}$-bimodule in a natural way. It is easy to see that the map
 $$X \otimes_{A\rtimes_\alpha \mathbb{H}} (L^2(A)\otimes L^2(\mathbb{H}))\to L^2(A)\otimes L^2(\G): x \otimes_{A\rtimes_\alpha \mathbb{H}} (\xi \otimes \eta)\mapsto (\pi_A\otimes \id)(x)(\xi \otimes \eta)$$
 is a unitary isomorphism of $(A \overset{\mathbb{H}}{\square} L^\infty(\G))\rtimes \G$-$A\rtimes_\alpha \mathbb{H}$-correspondences. Proposition \ref{technical} then implies that the images of $\pi$ and $\rho$ are each other's commutant.
 \end{Exa}

 Applying example \ref{example closed} to $A= \mathbb{C}$, we find that 
 \begin{equation}\label{particular}
    L^\infty(\check{\mathbb{H}})\sim_{\check{\G}} L^\infty(\mathbb{H}\backslash \G) \rtimes_{\Delta^\G} \G,
 \end{equation}
 where $L^\infty(\mathbb{H}\backslash \G):= \{x\in L^\infty(\G): \Delta^{\mathbb{H}, \G}(x)=  1\otimes x\}.$
It is easily checked that we have $\Delta^\G(L^\infty(\mathbb{H}\backslash \G))\subseteq L^\infty(\mathbb{H}\backslash \G)\ovot L^\infty(\G)$, i.e.\ $L^\infty(\mathbb{H}\backslash \G)$ is a right coideal von Neumann algebra. It turns out that the $\check{\G}$-$W^*$-Morita equivalence \eqref{particular} generalizes to arbitrary coideal von Neumann algebras, as we now explain.

Let $L^\infty(\mathbb{K}\backslash \G) \subseteq L^\infty(\G)$ be a right coideal von Neumann algebra, i.e.\ $L^\infty(\mathbb{K}\backslash \G)$ is a von Neumann subalgebra of $L^\infty(\G)$ and $\Delta(L^\infty(\mathbb{K}\backslash \G))\subseteq L^\infty(\mathbb{K}\backslash \G) \ovot L^\infty(\G).$ We define 
$$L^\infty(\hat{\mathbb{K}}):= L^\infty(\mathbb{K}\backslash \G)'\cap L^\infty(\hat{\G})\subseteq L^\infty(\hat{\mathbb{G}}), \quad L^\infty(\check{\mathbb{K}}) := \check{J} L^\infty(\hat{\mathbb{K}}) \check{J}\subseteq L^\infty(\check{\mathbb{G}}).$$
One can show that (cfr.\ \cite{KS14}*{Proposition 3.5})
$$\hat{\Delta}(L^\infty(\hat{\mathbb{K}})) \subseteq L^\infty(\hat{\mathbb{K}}) \ovot L^\infty(\hat{\G}), \quad \check{\Delta}(L^\infty(\check{\mathbb{K}})) \subseteq L^\infty(\check{\mathbb{K}}) \ovot L^\infty(\check{\G}),$$
and we refer to the von Neumann algebras $L^\infty(\check{\mathbb{K}})$ and $L^\infty(\hat{\mathbb{K}})$ as \emph{dual coideal von Neumann algebras}. The notations used here are of course inspired from the case where $\mathbb{K}= \mathbb{H}$ is a closed quantum subgroup of $\G$.

In the non-equivariant setting, it is well-known that coideal von Neumann algebras give rise to a specific $W^*$-Morita equivalence, see e.g.\ \cites{ILS98, KS14,AV23}. We can upgrade this $W^*$-Morita equivalence to the equivariant setting:

\begin{Prop}\label{coideal Morita}
    $(L^\infty(\mathbb{K}\backslash \G)\rtimes_\Delta \G, \id \otimes \check{\Delta}_r)\sim_{\check{\G}} (L^\infty(\check{\mathbb{K}}), \check{\Delta})$.
\end{Prop}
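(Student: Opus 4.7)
The plan is to construct an explicit $\check{\G}$-Morita correspondence on the Hilbert space $L^2(\G)$, generalizing Example \ref{example closed} from the closed quantum subgroup case to arbitrary coideals. For the three required pieces of data I will take: the faithful normal anti-$*$-representation $\rho: L^\infty(\check{\mathbb{K}}) \to B(L^2(\G))$ defined by $\rho(\check{x}) = \check{J}\check{x}^*\check{J}$, coming from the standard form of $L^\infty(\check{\G})\subseteq B(L^2(\G))$, whose image equals $\check{J}L^\infty(\check{\mathbb{K}})\check{J} = L^\infty(\hat{\mathbb{K}})$ by definition of the dual coideal; the faithful normal $*$-representation $\pi: L^\infty(\mathbb{K}\backslash\G)\rtimes_\Delta \G \to B(L^2(\G))$ uniquely determined by $\pi(\Delta(x)) = x$ for $x \in L^\infty(\mathbb{K}\backslash\G)$ and $\pi(1\otimes \check{y}) = \check{y}$ for $\check{y}\in L^\infty(\check{\G})$, consistent on the generating $*$-algebra because $L^\infty(\G)$ and $L^\infty(\check{\G})$ commute in $B(L^2(\G))$; and the unitary $\check{\G}$-representation $U := \check{V}\in L^\infty(\hat{\G})\ovot L^\infty(\check{\G})\subseteq B(L^2(\G))\ovot L^\infty(\check{\G})$, which implements the right coproduct $\check{\Delta}_r$ on $B(L^2(\G))$.

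The Morita property $\pi(L^\infty(\mathbb{K}\backslash\G)\rtimes_\Delta \G) = \rho(L^\infty(\check{\mathbb{K}}))'$ is immediate from the definition $L^\infty(\hat{\mathbb{K}}) = L^\infty(\mathbb{K}\backslash\G)'\cap L^\infty(\hat{\G})$ combined with the bicommutant theorem:
\[
\rho(L^\infty(\check{\mathbb{K}}))' = L^\infty(\hat{\mathbb{K}})' = \bigl(L^\infty(\mathbb{K}\backslash\G)'\cap L^\infty(\hat{\G})\bigr)' = \bigl(L^\infty(\mathbb{K}\backslash\G) \cup L^\infty(\check{\G})\bigr)''.
\]
Equivariance of $\pi$ is direct: the dual action $\id \otimes \check{\Delta}_r$ fixes $\Delta(L^\infty(\mathbb{K}\backslash\G))$ (since $\check{V}\in L^\infty(\G)'\ovot L^\infty(\check{\G})$ commutes with $L^\infty(\G)\otimes 1$) and restricts to $\check{\Delta}$ on $1\otimes L^\infty(\check{\G})$, matching precisely the restriction of $\check{\Delta}_r = \Ad(\check{V})$ to $\pi(L^\infty(\mathbb{K}\backslash\G)\rtimes_\Delta \G)\subseteq B(L^2(\G))$. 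Equivariance of $\rho$, namely $(\rho\otimes R_{\check{\G}})\check{\Delta}(\check{x}) = \check{V}^*(\rho(\check{x})\otimes 1)\check{V}$, is the standard compatibility between the standard anti-$*$-representation, the unitary antipode, and the implementing unitary of the right coproduct; it will be verified via the $\check{\G}$-analogues of the identities $R_\G(\cdot) = \check{J}(\cdot)^*\check{J}$ and $(u_\G\otimes 1)V(u_\G\otimes 1) = W_{21}$ recorded in the preliminaries.

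The main obstacle is establishing that $\pi$ is a well-defined normal $*$-isomorphism onto the claimed image $(L^\infty(\mathbb{K}\backslash\G)\cup L^\infty(\check{\G}))''$, which is essentially a Landstad-type theorem for coideal actions. My plan is to first use the spatial isomorphism $\Ad(V^*)$ on $L^2(\G)\otimes L^2(\G)$, which sends $\Delta(x)\mapsto x\otimes 1$ and $1\otimes \check{y}\mapsto \check{\Delta}(\check{y})$, to identify the crossed product with the von Neumann algebra $[L^\infty(\mathbb{K}\backslash\G)\otimes 1,\,\check{\Delta}(L^\infty(\check{\G}))]''\subseteq B(L^2(\G))\ovot L^\infty(\check{\G})$, and then to recover $(L^\infty(\mathbb{K}\backslash\G)\cup L^\infty(\check{\G}))''$ on the first leg by a slicing argument using the left-invariant Haar weight of $\check{\G}$. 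This Landstad-type identification is known in the theory of coideals (see \cite{KS14, AV23}), and once granted, all remaining verifications reduce to formal manipulations with the multiplicative unitaries $V$ and $\check{V}$.
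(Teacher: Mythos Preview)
Your proposal is correct and uses exactly the same $\check{\G}$-Morita correspondence on $L^2(\G)$ as the paper: the same $\pi$, the same $\rho(\check{x})=\check{J}\check{x}^*\check{J}$, and the same $U=\check{V}$. The verifications of the commutant relation and of the two equivariance conditions are also the same in spirit.

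The one point worth flagging is your ``main obstacle''. In the paper there is no obstacle: one simply observes that $\pi=\Delta_l^{-1}$, the inverse of the spatial $*$-isomorphism $\Delta_l=\Ad(W^*):B(L^2(\G))\to \Delta_l(B(L^2(\G)))$. Since $W\in L^\infty(\G)\ovot L^\infty(\hat{\G})$ commutes with $1\otimes L^\infty(\check{\G})$ and satisfies $W^*(1\otimes x)W=\Delta(x)$ for $x\in L^\infty(\G)$, the map $\Delta_l$ sends $(L^\infty(\mathbb{K}\backslash\G)\cup L^\infty(\check{\G}))''$ isomorphically onto the crossed product, and $\pi$ is just the restriction of $\Delta_l^{-1}$. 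Your proposed detour via $\Ad(V^*)$ followed by a slicing/Landstad argument would also work, but it is unnecessary: using $W$ instead of $V$ lands you directly on the second tensor leg with no slicing required. (A minor correction: $\check{V}\in L^\infty(\G)'\ovot L^\infty(\check{\G})$, not $L^\infty(\hat{\G})\ovot L^\infty(\check{\G})$; this does not affect your argument, since what you actually use is that $\check{V}$ commutes with $L^\infty(\G)\otimes 1$.)
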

\begin{proof} We verify that $L^2(\G)$ becomes a $\check{\G}$-$L^\infty(\mathbb{K}\backslash \G)\rtimes_\Delta \G$-$L^\infty(\check{\mathbb{K}})$-Morita correspondence for the following data:
    \begin{itemize}\setlength\itemsep{-0.5em}
        \item The faithful $*$-representation $\pi= \Delta_l^{-1}: L^\infty(\mathbb{K}\backslash \G)\rtimes_\Delta \G \to B(L^2(\G))$ uniquely determined by
        $$\pi(\Delta(x))= x, \quad \pi(1\otimes \check{y})= \check{y}, \quad x \in L^\infty(\mathbb{K}\backslash \G), \quad \check{y}\in L^\infty(\check{\G}),$$
        \item the faithful anti-$*$-representation $\rho: L^\infty(\check{\mathbb{K}}) \to B(L^2(\G)): \check{y} \mapsto \check{J}\check{y}^* \check{J}$,
        \item the unitary $\check{V}\in B(L^2(\G))\bar{\otimes} L^\infty(\check{\G})$.
    \end{itemize}
    We calculate for $x\in L^\infty(\G)$ and $\check{y}\in L^\infty(\check{\G})$ that
    \begin{align*}
        &(\pi\otimes \id)(\id \otimes \check{\Delta}_r)(\Delta(x)) = (\pi\otimes \id)(\Delta(x)\otimes 1) = x\otimes 1 = \check{V}(x\otimes 1)\check{V}^*= \check{V}(\pi(\Delta(x))\otimes 1)\check{V}^*,\\
        & (\pi\otimes \id)(\id \otimes \check{\Delta}_r)(1\otimes \check{y})= (\pi\otimes \id)(1\otimes \check{\Delta}(\check{y}))= \check{\Delta}(\check{y}) = \check{V}(\pi(1\otimes \check{y}) \otimes 1)\check{V}^*,
    \end{align*}
    so we conclude that $(\pi\otimes \id)(\id \otimes \check{\Delta}_r)(z) = \check{V}(\pi(z)\otimes 1)\check{V}^*$ for all $z\in L^\infty(\mathbb{K}\backslash\G)\rtimes_\Delta \G$. On the other hand, if $\check{y}\in L^\infty(\check{\mathbb{K}})$, we have
    \begin{align*}
       \check{V}^*(\rho(\check{y})\otimes 1)\check{V}&= \check{V}^*(\check{J}\check{y}^*\check{J}\otimes 1)\check{V}=(\check{J}\otimes J)\check{V}(\check{y}^*\otimes 1)\check{V}^*(\check{J}\otimes J)= (\check{J}\otimes J)\check{\Delta}(\check{y}^*)(\check{J}\otimes J)= (\rho \otimes \check{R})(\check{\Delta}(\check{y})).
    \end{align*}
    Finally, note that $\pi(L^\infty(\mathbb{K}\backslash \G)\rtimes_\Delta \G)' = L^\infty(\hat{\mathbb{K}}) = \check{J}L^\infty(\check{\mathbb{K}}) \check{J}=  \rho(L^\infty(\check{\mathbb{K}})).$
\end{proof}

\section{Preservation of dynamical properties under equivariant Morita equivalence}

In this section, we prove that equivariant Morita equivalence preserves (strong) inner amenability, (strong) amenability and equivariant injectivity of dynamical systems. We apply this to the natural Morita equivalence associated to a coideal von Neumann subalgebra $L^\infty(\mathbb{K}\backslash \G)\subseteq L^\infty(\G)$ and use this to relate approximation properties of $L^\infty(\mathbb{K}\backslash \G)$ and $L^\infty(\check{\mathbb{K}})$. We use this to resolve some open questions in the literature, illustrating the use of equivariant Morita equivalence in the theory of locally compact quantum groups.

Let us recall from \cite{DCDR24a}*{Definition 4.1} that a $\G$-equivariant unital inclusion $A \subseteq B$ is called \begin{itemize}\setlength\itemsep{-0.5em}
    \item \emph{strongly $\G$-amenable}  if we have the $\G$-equivariant weak containment ${}_A L^2(A)_A\preccurlyeq {}_A L^2(B)_A$ of $\G$-$A$-$A$-correspondences,
    \item \emph{$\G$-amenable} if there exists a $\G$-equivariant ucp conditional expectation $E: B \to A$.
\end{itemize}
In particular, we say that $(A, \alpha)$ is \emph{(strongly) $\G$-amenable} if the $\G$-inclusion $\alpha(A)\subseteq A\ovot L^\infty(\G)$ is (strongly) amenable and we say that $(A, \alpha)$ is (strongly) \emph{inner $\G$-amenable} if the $\G$-inclusion $\alpha(A)\subseteq A\rtimes_\alpha \G$ (with $A\rtimes_\alpha \G$ endowed with the adjoint $\G$-action $\id \otimes \Delta_r: A\rtimes_\alpha \G \to (A\rtimes_\alpha \G)\ovot L^\infty(\G)$) is (strongly) amenable.   From \cite{DCDR24a}*{Definition 6.6}, we recall that $(A, \alpha)$ is strongly $\G$-amenable if and only if the trivial $\G$-$A$-$A$ correspondence is $\G$-weakly contained in the $\G$-semi-coarse $\G$-$A$-$A$-correspondence $S_A^\G$  and that $(A, \alpha)$ is strongly inner $\G$-amenable if and only if the trivial $\G$-$A$-$A$-correspondence is $\G$-weakly contained in the adjoint $\G$-$A$-$A$-correspondence $D_A^\G$. For the definition of \emph{$\G$-injectivity} of $(A, \alpha)$, we refer the reader to \cite{DR24a}, but we mention that $(A, \alpha)$ is $\G$-injective if and only if $A$ is injective and $(A,\alpha)$ is $\G$-amenable \cite{DR24a}*{Proposition 3.10}.

\begin{Theorem}\label{MoritaApplication}
    Let $(A, \alpha)$ and $(B, \beta)$ be $\G$-$W^*$-algebras such that $(A, \alpha)\sim_\G (B, \beta)$. The following statements hold:
    \begin{enumerate}\setlength\itemsep{-0.5em}
        \item $(A, \alpha)$ is (strongly) $\G$-amenable if and only if $(B, \beta)$ is (strongly) $\G$-amenable.
\item $(A, \alpha)$ is (strongly) inner $\G$-amenable if and only if $(B, \beta)$ is (strongly) inner $\G$-amenable.
\item $(A, \alpha)$ is $\G$-injective if and only if $(B, \beta)$ is $\G$-injective.
    \end{enumerate}
\end{Theorem}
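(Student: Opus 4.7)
My plan is to leverage Theorem~\ref{char2} to reduce the statement to two elementary Morita moves. By that theorem, up to $\G$-equivariant isomorphism, $(A,\alpha)$ arises from $(B,\beta)$ in two steps: first by amplification to $(C,\gamma):=(B(\mcK)\ovot B,\id\otimes\beta)$, and then by the corner-twist $(pCp,\gamma_P)$ of Example~\ref{main example}, with $\gamma_P(z)=P\gamma(z)P^*$ for a full projection $p\in C$ and a partial isometry $P\in C\ovot L^\infty(\G)$ satisfying $P^*P=\gamma(p)$, $PP^*=p\otimes 1$ and the cocycle identity $(\id\otimes\id\otimes\Delta)(P)=(P\otimes 1)(\gamma\otimes\id)(P)$. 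Since $\sim_\G$ is symmetric, it suffices to show that each of the three properties is preserved under each of these two moves.

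\textbf{Amplification step.} This step is essentially automatic. A $\G$-equivariant ucp conditional expectation $E_0\colon B\ovot L^\infty(\G)\to\beta(B)$ yields $\id_{B(\mcK)}\otimes E_0\colon C\ovot L^\infty(\G)\to B(\mcK)\ovot\beta(B)=\gamma(C)$, establishing $\G$-amenability of $(C,\gamma)$. For inner amenability one uses the canonical identification $C\rtimes_\gamma\G\cong B(\mcK)\ovot(B\rtimes_\beta\G)$ and the same tensoring trick applied to a CE $B\rtimes_\beta\G\to\beta(B)$. For $\G$-injectivity, combine $\G$-amenability with the classical fact that $B$ is injective if and only if $B(\mcK)\ovot B$ is.

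\textbf{Corner-twist step.} Given a $\G$-equivariant ucp conditional expectation $F\colon C\ovot L^\infty(\G)\to\gamma(C)$, define
\[
G\colon pCp\ovot L^\infty(\G)\to\gamma_P(pCp),\qquad G(z):=P\,F(P^*zP)\,P^*.
\]
Because $P=(p\otimes 1)P=P\gamma(p)$ and $P^*P=\gamma(p)$, one has $P^*zP\in\gamma(p)(C\ovot L^\infty(\G))\gamma(p)$, so by $\gamma(C)$-bimodularity of $F$ one gets $F(P^*zP)=\gamma(pyp)$ for some $y\in C$, whence $G(z)=\gamma_P(pyp)\in\gamma_P(pCp)$. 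The projection identity $G\circ\gamma_P=\gamma_P$ and the ucp property follow by a direct computation using $P^*P=\gamma(p)$. An analogous construction handles inner $\G$-amenability, with $F$ replaced by a $\G$-equivariant CE onto $\gamma(C)$ defined on the crossed product $C\rtimes_\gamma\G$ (equipped with its adjoint $\G$-action), since $P$ relates the relevant adjoint inclusions. For (3), the $\G$-injectivity case reduces to (1) via \cite{DR24a}*{Proposition~3.10} combined with classical Morita-invariance of injectivity.

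\textbf{Main obstacle and strong versions.} The single genuinely delicate step in the above is the verification of $\G$-equivariance of $G$: this rests on the cocycle identity $(\id\otimes\id\otimes\Delta)(P)=(P\otimes 1)(\gamma\otimes\id)(P)$ together with equivariance of $F$, and requires careful manipulation inside $C\ovot L^\infty(\G)\ovot L^\infty(\G)$. For the strong versions of (1) and (2), defined through $\G$-equivariant weak containment of correspondences, I would not argue through Theorem~\ref{char2} directly. Instead, the cleanest route is categorical: a $\G$-$W^*$-Morita equivalence $(A,\alpha)\sim_\G(B,\beta)$ induces, via the equivalence bimodule produced in Theorem~\ref{char1}, an equivalence of $W^*$-categories $\Corr^\G(A,A)\simeq\Corr^\G(B,B)$ that sends the trivial, semi-coarse and adjoint correspondences $\mathbb{I}$, $S_A^\G$, $D_A^\G$ to $\mathbb{I}$, $S_B^\G$, $D_B^\G$ respectively. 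Since weak containment is preserved by equivalences of $W^*$-categories, strong $\G$-amenability and strong inner $\G$-amenability transfer across $\sim_\G$. The main bookkeeping task in this approach is to verify the identification of the semi-coarse and adjoint correspondences under the categorical equivalence, which is where the explicit form of the cocycle $P$ reappears.
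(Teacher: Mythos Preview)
Your proposal is essentially correct and closely parallels the paper's argument. For the non-strong version of (1) and for (3), your reduction via Theorem~\ref{char2} is exactly what the paper does; the paper writes the conditional expectation as $E_A(z)=pE_B(P^*zP)p$ landing in $pBp$ rather than your $G(z)=PF(P^*zP)P^*$ landing in $\gamma_P(pCp)$, but these differ only by composition with the isomorphism $\gamma_P$, and the equivariance check is the same cocycle manipulation you describe. For (2), the paper remarks that your direct approach works but instead takes a shortcut: it invokes Proposition~\ref{crossed product} to pass to the $\check{\G}$-$W^*$-Morita equivalence $A\rtimes_\alpha\G\sim_{\check{\G}}B\rtimes_\beta\G$, and then uses \cite{DCDR24a}*{Theorem~6.12} (inner $\G$-amenability of $(A,\alpha)$ is equivalent to $\check{\G}$-amenability of the crossed product) to reduce everything to (1).

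For the strong versions, your categorical picture is the right idea, but one caution: weak containment is not automatically preserved by an abstract equivalence of $W^*$-categories---it is extra topological structure. What is actually needed, and what the paper invokes from \cite{DCDR24a}*{Proposition~6.3}, is that the concrete fusion functor $\mathcal{H}\boxtimes_B(-)\boxtimes_B\overline{\mathcal{H}}$ preserves $\G$-weak containment. The paper then writes out the chain
\[
L^2(A)\cong\mathcal{H}\boxtimes_B L^2(B)\boxtimes_B\overline{\mathcal{H}}\preccurlyeq_\G\mathcal{H}\boxtimes_B S_B^\G\boxtimes_B\overline{\mathcal{H}}\cong S_A^\G\boxtimes_A\mathcal{H}\boxtimes_B\overline{\mathcal{H}}\cong S_A^\G,
\]
with the key isomorphism $\mathcal{H}\boxtimes_B S_B^\G\cong S_A^\G\boxtimes_A\mathcal{H}$ taken from \cite{DCDR24a}*{Example~5.20}; this is precisely the ``bookkeeping task'' you anticipate.
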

\begin{proof}
    $(1)$ It suffices to show that if $(B,\beta)$ is (strongly) $\G$-amenable, then so is $(A, \alpha)$. 

  In the strong case, choose a $\G$-Morita-correspondence $\mathcal{H}\in \Corr^\G(A,B)$. By assumption, we have $L^2(B) \preccurlyeq_\G S_B^\G$. But then, using \cite{DCDR24a}*{Proposition 5.10, Example 5.14, Example 5.20, Proposition 6.3}, we find
  $$L^2(A) \cong \mathcal{H}\boxtimes_B L^2(B) \boxtimes_B \overline{\mathcal{H}}\preccurlyeq_\G \mathcal{H}\boxtimes_B S_B^\G \boxtimes \overline{\mathcal{H}}\cong S_A^\G \boxtimes_A\mathcal{H}\boxtimes_B \overline{\mathcal{H}}\cong S_A^\G,$$
    so that $(A, \alpha)$ is also strongly $\G$-amenable.

In the non-strong case, we note that $(B, \beta)$ is $\G$-amenable if and only if $(B(\mathcal{K})\ovot B, \id \otimes \beta)$ is $\G$-amenable and that $\G$-amenability is preserved under $\G$-equivariant isomorphism. Therefore, by Theorem \ref{char2}, we may assume that $(A, \alpha)= (pBp, \beta_P)$ (cfr.\ Example \ref{main example}), where $p\in B$ is a (full) projection and $P\in B \ovot L^\infty(\G)$ satisfies
    $$(\id \otimes \Delta)(P) = (P\otimes 1) (\beta \otimes \id)(P), \quad P^*P=\beta(p), \quad PP^* = p\otimes 1.$$

By assumption, there exists a $\G$-equivariant ucp map
$$E_B: (B \ovot L^\infty(\G), \id \otimes \Delta)\to (B, \beta)$$
    such that $E_B\circ \beta = \id_B$. We then define the ucp map
    $$E_A: (pBp) \ovot L^\infty(\G)\to pBp: z \mapsto pE_B(P^*zP)p.$$

    If $a \in pBp$, then $E_A(\beta_P(a)) = pE_B(P^*P \beta(a)P^*P)p= pE_B(\beta(a))p = pap = a$, and if $z\in (pBp)\ovot L^\infty(\G)$, then
    \begin{align*}
    \beta_P(E_A(z))&= P P^*P \beta(E_B(P^*zP))P^*PP^*\\
    &= P (E_B\otimes \id)(\id \otimes \Delta)(P^*zP) P^*\\
    &= P(E_B\otimes \id)((\beta \otimes \id)(P^*)P_{12}^*(\id \otimes \Delta)(z)P_{12}(\beta \otimes \id)(P))P^*\\
    &=PP^* (E_B\otimes \id)(P_{12}^* (\id \otimes \Delta)(z) P_{12}) PP^*\\
    &= (p\otimes 1)(E_B\otimes \id)(P_{12}^*(\id \otimes \Delta)(z)P_{12})(p\otimes 1)\\
    &= (E_A\otimes \id)(\id \otimes \Delta)(z),
    \end{align*}
    where the fourth equality follows from a multiplicative domain argument. It follows that $$E_A: (A \ovot L^\infty(\G), \id \otimes \Delta)\to (A, \beta_P)$$ is a $\G$-equivariant ucp map satisfying $E_A\circ \beta_P = \id$, so that $(A, \alpha)$ is also $\G$-amenable.

    (2) The same proof strategy as in the proof of $(1)$ works. Alternatively, this follows from the following equivalences:
    \begin{align*}
(A, \alpha) \textrm{\ is (strongly) inner } \G \textrm{-amenable}&\iff  (A\rtimes_\alpha \G, \id \otimes \check{\Delta}_r) \textrm{\ is (strongly) } \check{\G} \textrm{-amenable}\\
&\iff (B\rtimes_\beta \G, \id \otimes \check{\Delta}_r) \textrm{\ is (strongly) } \check{\G} \textrm{-amenable}\\
&\iff(B, \beta) \textrm{\ is (strongly) inner } \G \textrm{-amenable}.
    \end{align*}
Here, the first and third equivalence uses \cite{DCDR24a}*{Theorem 6.12} and the second equivalence combines $(1)$ and Proposition \ref{crossed product}.

(3) This follows since both $\G$-amenability and injectivity are preserved under $\G$-equivariant Morita equivalence.
\end{proof}

\begin{Cor}\label{app1} Let $\mathbb{H}$ be a Vaes-closed quantum subgroup of the locally compact quantum group $\G$. The following statements hold for a $\mathbb{H}$-$W^*$-algebra $(A, \alpha)$:
\begin{enumerate}\setlength\itemsep{-0.5em}
    \item $A \overset{\mathbb{H}}{\square} L^\infty(\G)$ is (strongly) $\G$-amenable if and only if $A\rtimes_\alpha \mathbb{H}$ is (strongly) inner $\check{\G}$-amenable.
    \item $A \overset{\mathbb{H}}{\square} L^\infty(\G)$ is (strongly) inner $\G$-amenable if and only if $A\rtimes_\alpha \mathbb{H}$ is (strongly) $\check{\G}$-amenable.
    \item $A \overset{\mathbb{H}}{\square} L^\infty(\G)$ is $\G$-injective if and only if $A\rtimes_\alpha \mathbb{H}$ is $\check{\G}$-injective.
\end{enumerate}
\end{Cor}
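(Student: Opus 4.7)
The argument chains two equivalences. Write $(B, \beta) := (A \overset{\mathbb{H}}{\square} L^\infty(\G), \id \otimes \Delta^\G)$, so that $B\rtimes_\beta \G$ with its canonical dual $\check{\G}$-action is precisely the right-hand side appearing in Example \ref{example closed}.

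First, Example \ref{example closed} supplies the $\check{\G}$-$W^*$-Morita equivalence
$$(A\rtimes_\alpha \mathbb{H}, \delta)\sim_{\check{\G}} (B\rtimes_\beta \G, \id \otimes \check{\Delta}_r).$$
By Theorem \ref{MoritaApplication}, each of (strong) $\check{\G}$-amenability, (strong) inner $\check{\G}$-amenability, and $\check{\G}$-injectivity holds for $(A\rtimes_\alpha \mathbb{H}, \delta)$ if and only if it holds for $(B\rtimes_\beta \G, \id \otimes \check{\Delta}_r)$. This is the only place where the work of Section 3 is used, and it reduces the entire corollary to the corresponding statements for the crossed product $B\rtimes_\beta \G$.

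Second, invoke the crossed product duality for dynamical properties established in \cite{DCDR24a}*{Theorem 6.12}: for any $\G$-$W^*$-algebra $(B, \beta)$, (strong) $\G$-amenability of $(B, \beta)$ is equivalent to (strong) inner $\check{\G}$-amenability of $(B\rtimes_\beta \G, \id \otimes \check{\Delta}_r)$, and dually (strong) inner $\G$-amenability of $(B, \beta)$ is equivalent to (strong) $\check{\G}$-amenability of $(B\rtimes_\beta \G, \id \otimes \check{\Delta}_r)$. Concatenating with the first step, one reads off (1) and (2) immediately.

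For (3), the same scheme works once one has the injectivity analog: $(B, \beta)$ is $\G$-injective if and only if $(B\rtimes_\beta \G, \id \otimes \check{\Delta}_r)$ is $\check{\G}$-injective. This is the main obstacle. My approach is to derive it from \cite{DR24a}*{Proposition 3.10} (which characterizes $\G$-injectivity as ordinary injectivity plus $\G$-amenability) by treating the two halves separately: the $\G$-amenability half is handled by the duality of \cite{DCDR24a}*{Theorem 6.12} already used in (1), while the ordinary-injectivity half is handled via the Takesaki biduality $(B\rtimes_\beta \G)\rtimes_{\id \otimes \check{\Delta}_r} \check{\G} \cong B \ovot B(L^2(\G))$ combined with $W^*$-Morita invariance of injectivity (itself the non-equivariant reduction of Theorem \ref{MoritaApplication}(3)). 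The care needed is in aligning these two halves, since in general injectivity of $B$ and of $B\rtimes_\beta \G$ are not automatically equivalent; here the amenability side of the equivariant injectivity assumption supplies exactly the extra information needed to bridge the two.
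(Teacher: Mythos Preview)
Your treatment of (1) and (2) is exactly the paper's proof: invoke the $\check{\G}$-$W^*$-Morita equivalence of Example \ref{example closed}, transfer the property across via Theorem \ref{MoritaApplication}, and then apply \cite{DCDR24a}*{Theorem 6.12} to pass between $(B,\beta)$ and its crossed product.

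For (3) there is a genuine gap. You correctly reduce to showing that $(B,\beta)$ is $\G$-injective if and only if $(B\rtimes_\beta\G,\id\otimes\check{\Delta}_r)$ is $\check{\G}$-injective, and then propose to split this via \cite{DR24a}*{Proposition 3.10} into ``injective $+$ amenable'' on each side. The problem is that the crossed-product duality of \cite{DCDR24a}*{Theorem 6.12} does \emph{not} match ``$\G$-amenable'' with ``$\check{\G}$-amenable of the crossed product''; it swaps amenability with \emph{inner} amenability. Concretely, from $(B,\beta)$ $\G$-amenable you obtain that $(B\rtimes_\beta\G)$ is \emph{inner} $\check{\G}$-amenable, and your biduality argument then does yield ordinary injectivity of $B\rtimes_\beta\G$ --- but you still need $\check{\G}$-\emph{amenability} of $(B\rtimes_\beta\G)$ to conclude $\check{\G}$-injectivity via Proposition 3.10, and inner $\check{\G}$-amenability does not give you that. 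The same mismatch obstructs the reverse implication. Your closing sentence (``the amenability side \ldots\ supplies exactly the extra information needed'') does not explain how this gap is bridged.

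The paper sidesteps this entirely by citing \cite{DR24a}*{Corollary 5.3}, which is precisely the statement that $\G$-injectivity of $(B,\beta)$ is equivalent to $\check{\G}$-injectivity of $(B\rtimes_\beta\G,\id\otimes\check{\Delta}_r)$; combining this with Theorem \ref{MoritaApplication}(3) and Example \ref{example closed} finishes (3) immediately. If you want to avoid that citation, you would need an independent argument for the injectivity duality that does not route through the amenable/inner-amenable decomposition.
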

\begin{proof}
    Consider the $\check{\G}$-$W^*$-Morita equivalence of Example \ref{example closed}. The equivalences $(1)$ and $(2)$ then follow by combining Theorem \ref{MoritaApplication} and \cite{DCDR24a}*{Theorem 6.12}. The equivalence $(3)$ follows by combining Theorem \ref{MoritaApplication} and \cite{DR24a}*{Corollary 5.3}.
\end{proof}

\begin{Rem}
    In \cite{Cr17b}*{Theorem 3.5}, it was shown that for a locally compact quantum group $\G$ with $\check{\G}$ amenable and for a Vaes-closed quantum subgroup $\mathbb{H}$ of $\G$, it holds that $L^\infty(\check{\mathbb{H}})$ is $\check{\G}$-injective if and only if $\mathbb{H}$ is amenable. Using Corollary \ref{app1}, we can give an alternative, shorter proof of the backwards implication. Indeed, if $m_\mathbb{H}: (L^\infty(\mathbb{H}), \Delta_\mathbb{H})\to (\C, \tau)$ is a $\mathbb{H}$-equivariant state, then
    $$(m_\mathbb{H}\otimes \id) \circ \Delta^{\mathbb{H}, \G}: (L^\infty(\G), \Delta^\G)\to (L^\infty(\mathbb{H}\backslash \G), \Delta^\G)$$
    is a $\G$-equivariant ucp conditional expectation. Since $\check{\G}$ is amenable, we know that $(L^\infty(\G), \Delta^\G)$ is $\G$-injective \cite{Cr17a}*{Theorem 5.1} and thus also $(L^\infty(\mathbb{H}\backslash \G), \Delta^\G)$ is $\G$-injective. Applying Corollary \ref{app1} (with $A= \mathbb{C}$), we deduce that $L^\infty(\check{\mathbb{H}})$ is $\check{\G}$-injective.
\end{Rem}

In exactly the same way, by combining Theorem \ref{MoritaApplication} and the $\check{\G}$-$W^*$-Morita equivalence from Proposition \ref{coideal Morita}, we find:

\begin{Cor}\label{app2}
    Let $L^\infty(\mathbb{K}\backslash \G)\subseteq L^\infty(\G)$ be a (right) coideal von Neumann algebra. The following statements hold:
    \begin{enumerate}\setlength\itemsep{-0.5em}
        \item $L^\infty(\mathbb{K}\backslash \G)$ is (strongly) $\G$-amenable if and only if $L^\infty(\check{\mathbb{K}})$ is (strongly) inner $\check{\G}$-amenable.
        \item $L^\infty(\mathbb{K}\backslash \G)$ is (strongly) inner $\G$-amenable if and only if $L^\infty(\check{\mathbb{K}})$ is (strongly) $\check{\G}$-amenable.
        \item $L^\infty(\mathbb{K}\backslash \G)$ is $\G$-injective if and only if $L^\infty(\check{\mathbb{K}})$ is $\check{\G}$-injective.
    \end{enumerate}
\end{Cor}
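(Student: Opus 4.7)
The plan is to imitate exactly the proof of Corollary \ref{app1}, but using the coideal $\check{\G}$-$W^*$-Morita equivalence
\[
(L^\infty(\mathbb{K}\backslash \G)\rtimes_\Delta \G,\,\id\otimes \check{\Delta}_r)\sim_{\check{\G}} (L^\infty(\check{\mathbb{K}}),\check{\Delta})
\]
of Proposition \ref{coideal Morita} in place of the induction-Morita equivalence of Example \ref{example closed}. Thus the whole statement should follow by simply invoking Theorem \ref{MoritaApplication} together with the crossed product duality \cite{DCDR24a}*{Theorem 6.12} (for amenability and inner amenability) and \cite{DR24a}*{Corollary 5.3} (for injectivity).

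More concretely, for part (1) I would chain two equivalences: first, by \cite{DCDR24a}*{Theorem 6.12}, $(L^\infty(\mathbb{K}\backslash \G),\Delta)$ is (strongly) $\G$-amenable if and only if $(L^\infty(\mathbb{K}\backslash \G)\rtimes_\Delta \G,\id\otimes\check{\Delta}_r)$ is (strongly) inner $\check{\G}$-amenable; second, by Theorem \ref{MoritaApplication}(2) applied to the Morita equivalence above (with $\G$ replaced by $\check{\G}$), the latter condition is equivalent to $(L^\infty(\check{\mathbb{K}}),\check{\Delta})$ being (strongly) inner $\check{\G}$-amenable. Part (2) is dual: by \cite{DCDR24a}*{Theorem 6.12} again, $(L^\infty(\mathbb{K}\backslash \G),\Delta)$ is (strongly) inner $\G$-amenable iff the crossed product is (strongly) $\check{\G}$-amenable, and by Theorem \ref{MoritaApplication}(1) the crossed product is (strongly) $\check{\G}$-amenable iff $L^\infty(\check{\mathbb{K}})$ is. For part (3), I would replace these two inputs by \cite{DR24a}*{Corollary 5.3}, which says that $(A,\alpha)$ is $\G$-injective iff $(A\rtimes_\alpha \G,\id\otimes\check{\Delta}_r)$ is $\check{\G}$-injective, combined with Theorem \ref{MoritaApplication}(3): $L^\infty(\mathbb{K}\backslash \G)$ is $\G$-injective iff the crossed product is $\check{\G}$-injective iff $L^\infty(\check{\mathbb{K}})$ is $\check{\G}$-injective.

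There is essentially no obstacle here, since all the hard work has been done in the previous sections: Theorem \ref{MoritaApplication} provides the Morita-invariance of the three dynamical properties, Proposition \ref{coideal Morita} supplies the specific Morita equivalence we need, and the crossed product duality results of \cite{DCDR24a}*{Theorem 6.12} and \cite{DR24a}*{Corollary 5.3} bridge between dynamical properties of $(A,\alpha)$ and of $(A\rtimes_\alpha \G,\id\otimes\check{\Delta}_r)$ in both directions (strongly amenable $\leftrightarrow$ strongly inner amenable crossed product, and vice versa). The only thing to be mildly careful about is to invoke Theorem \ref{MoritaApplication}(1) in part (2) but Theorem \ref{MoritaApplication}(2) in part (1), because the crossed product duality flips amenability and inner amenability.
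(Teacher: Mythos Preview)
Your proposal is correct and follows exactly the paper's own approach: the paper simply states that the result follows ``in exactly the same way'' as Corollary \ref{app1}, by combining Theorem \ref{MoritaApplication} with the $\check{\G}$-$W^*$-Morita equivalence from Proposition \ref{coideal Morita}. Your expanded version spelling out the role of \cite{DCDR24a}*{Theorem 6.12} and \cite{DR24a}*{Corollary 5.3}, and the care about which part of Theorem \ref{MoritaApplication} to invoke in (1) versus (2), is precisely what is implicit in the paper's one-line justification.
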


We can then easily prove the following, part of which refines the important result \cite{AK24}*{Theorem 5.9}. For the definition of compact quasi-subgroup $\mathbb{K}$ of a compact quantum group $\G$, we refer the reader to \cite{AK24}*{Section 2.4}, but we mention that a closed quantum subgroup $\mathbb{H}$ of a compact quantum group is of this type.
\begin{Theorem}\label{main}
Let $\G$ be a compact quantum group and $L^\infty(\mathbb{K}\backslash \G)$ a (right) coideal von Neumann algebra. The following statements are equivalent:
\begin{enumerate}\setlength\itemsep{-0.5em}
    \item $(\ell^\infty(\check{\mathbb{K}}), \check{\Delta})$ is $\check{\G}$-amenable.
    \item $(\ell^\infty(\check{\mathbb{K}}), \check{\Delta})$ is $\check{\G}$-injective.
    \item $(L^\infty(\mathbb{K}\backslash \G), \Delta)$ is inner $\G$-amenable.
    \item $(L^\infty(\mathbb{K}\backslash \G), \Delta)$ is $\G$-injective.
    \item $\mathbb{K}$ is a compact quasi-subgroup of $\G$ and $\ell^\infty(\check{\mathbb{K}})$ is relatively amenable in $\ell^\infty(\check{\G})$, i.e.\ there exists a $\check{\G}$-equivariant ucp map $\ell^\infty(\check{\G})\to \ell^\infty(\check{\mathbb{K}}).$
\end{enumerate}
\end{Theorem}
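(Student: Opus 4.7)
The plan is to establish $(1) \Leftrightarrow (2) \Leftrightarrow (3) \Leftrightarrow (4)$ by combining Corollary \ref{app2} with an atomicity-based injectivity observation in the discrete setting, and then to connect this chain to $(5)$ via \cite{AK24}*{Theorem 5.9}.

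For the equivalences among $(1)$--$(4)$: both $(1) \Leftrightarrow (3)$ and $(2) \Leftrightarrow (4)$ follow immediately from Corollary \ref{app2}(2) and Corollary \ref{app2}(3), applied to the right coideal $L^\infty(\mathbb{K}\backslash \G)$. The implication $(2) \Rightarrow (1)$ is trivial, since $\check{\G}$-injectivity implies $\check{\G}$-amenability. For the reverse $(1) \Rightarrow (2)$, I would exploit that $\check{\G}$ is discrete (as $\G$ is compact), so $\ell^\infty(\check{\G}) \cong \prod_{\alpha \in \Irr(\G)} B(H_\alpha)$ is atomic; the right coideal $\ell^\infty(\check{\mathbb{K}})$ of $\check{\G}$ is then itself atomic, as a standard structural fact about coideals of discrete quantum groups, and in particular injective as a von Neumann algebra. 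By \cite{DR24a}*{Proposition 3.10}, $\check{\G}$-amenability combined with injectivity of $\ell^\infty(\check{\mathbb{K}})$ is equivalent to $\check{\G}$-injectivity, yielding $(2)$.

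For the equivalence with $(5)$, the implication $(5) \Rightarrow (1)$ is direct: a $\check{\G}$-equivariant ucp map $\ell^\infty(\check{\G}) \to \ell^\infty(\check{\mathbb{K}})$ restricts on the inclusion to a $\check{\G}$-equivariant conditional expectation, witnessing $\check{\G}$-amenability of $(\ell^\infty(\check{\mathbb{K}}), \check{\Delta})$. For $(2) \Rightarrow (5)$, I would invoke \cite{AK24}*{Theorem 5.9}, which supplies both the compact quasi-subgroup structure on $\mathbb{K}$ and the relative amenability from the $\check{\G}$-injectivity of $\ell^\infty(\check{\mathbb{K}})$.

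The main obstacle, and the precise refinement over \cite{AK24}*{Theorem 5.9}, is that the cited theorem originally presupposes the compact quasi-subgroup condition as a standing hypothesis. The genuine novelty here is that our Morita-theoretic chain $(1)$--$(4)$ shows this structural hypothesis to be a consequence of (any of) the amenability assumptions: one can upgrade $\check{\G}$-amenability of $\ell^\infty(\check{\mathbb{K}})$ to full $\check{\G}$-injectivity via atomicity of dual coideals in the discrete setting, and this stronger conclusion is what ultimately drives the quasi-subgroup structure out of \cite{AK24}'s machinery.
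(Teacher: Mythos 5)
Your treatment of $(1)\Leftrightarrow(2)\Leftrightarrow(3)\Leftrightarrow(4)$ matches the paper: Corollary \ref{app2} gives $(1)\Leftrightarrow(3)$ and $(2)\Leftrightarrow(4)$, and the paper likewise closes $(1)\Leftrightarrow(2)$ by observing that $\ell^\infty(\check{\mathbb{K}})$ is a direct product of matrix algebras, hence injective, and then invoking \cite{DR24a}*{Proposition 3.10}. The problems are concentrated in the link with $(5)$.

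First, your claim that $(5)\Rightarrow(1)$ is ``direct'' is a genuine gap, and in fact this is the hard implication of the whole theorem. A $\check\G$-equivariant ucp map $E:\ell^\infty(\check\G)\to\ell^\infty(\check{\mathbb{K}})$ need not restrict to the identity on $\ell^\infty(\check{\mathbb{K}})$: already for a classical discrete group $G$ with subgroup $H$, a right-translation-equivariant ucp map $\ell^\infty(G)\to\ell^\infty(H\backslash G)$ corresponds to an $H$-invariant mean on $G$, which has no reason to restrict to evaluation at the trivial coset on $\ell^\infty(H\backslash G)$ (take $G=\Z$, $H=2\Z$ and a $\Z$-invariant mean). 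This is precisely the distinction between relative amenability and amenability \`a la Caprace--Monod, and erasing it would trivialize Corollary \ref{ezcor} and \cite{KKSV22}*{Question 8.1}. Moreover, even a genuine conditional expectation $\ell^\infty(\check\G)\to\ell^\infty(\check{\mathbb{K}})$ is not the same datum as $\check\G$-amenability of $(\ell^\infty(\check{\mathbb{K}}),\check\Delta)$, which asks for an equivariant conditional expectation $\ell^\infty(\check{\mathbb{K}})\ovot\ell^\infty(\check\G)\to\check\Delta(\ell^\infty(\check{\mathbb{K}}))$. The paper obtains $(5)\Rightarrow(4)$ exactly by citing \cite{AK24}*{Theorem 5.9}, which is where this nontrivial work lives; you instead spend that citation on $(2)\Rightarrow(5)$ and leave the forward direction unproved.

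Second, your $(2)\Rightarrow(5)$ is circular as written: as you yourself concede in your closing paragraph, \cite{AK24}*{Theorem 5.9} takes the compact quasi-subgroup condition as a standing hypothesis, so it cannot ``supply'' that condition. The paper's route is concrete: the relative amenability in $(5)$ is immediate from $\check\G$-injectivity (extend the identity of $\ell^\infty(\check{\mathbb{K}})$ along the equivariant inclusion into $\ell^\infty(\check\G)$), while the quasi-subgroup property is extracted by passing to the $C^*$-level --- $(2)\Rightarrow(4)$ together with \cite{DH24}*{Lemma 3.17} gives $\G$-$C^*$-injectivity of $C(\mathbb{K}\backslash\G)=\mathcal{R}(L^\infty(\mathbb{K}\backslash\G))$, and then \cite{AK24}*{Proposition 5.8} applies. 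Your appeal to ``\cite{AK24}'s machinery'' gestures in this direction but does not identify the actual mechanism, so this step also remains a gap.
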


\begin{proof}
    Since $\ell^\infty(\check{\mathbb{K}})$ is isomorphic to a direct product of matrix algebras, it is an injective von Neumann algebra. Consequently, the equivalence $(1)\iff (2)$ follows from \cite{DR24a}*{Proposition 3.10}. The equivalences $(1) \iff (3)$ and $(2) \iff (4)$ are proven in Corollary \ref{app2}. The implication $(5)\implies (4)$ is proven in \cite{AK24}*{Theorem 5.9}. Finally, if $(2)$ holds, then clearly $\ell^\infty(\check{\mathbb{K}})$ is relatively amenable in $\ell^\infty(\check{\G})$. Since $(2)\implies (4)$, we know from \cite{DH24}*{Lemma 3.17} that $C(\mathbb{K}\backslash \G)= \mathcal{R}(L^\infty(\mathbb{K}\backslash \G))$ is $\G$-$C^*$-injective. Then \cite{AK24}*{Proposition 5.8} shows that $\mathbb{K}$ is a compact quasi-subgroup of $\G$. We thus have shown that $(2)\implies (5)$.
    \end{proof}

\begin{Rem} Let us remark that if $\mathbb{G}$ is a co-amenable compact quantum group, then $\ell^\infty(\check{\mathbb{K}})$ is automatically relatively amenable in $\ell^\infty(\check{\G})$. Theorem \ref{main} provides us with a rich class of examples of non-amenable actions of (amenable) discrete quantum groups, cfr.\ \cite{DCDR24b}*{Section 4}. \end{Rem}

We now give several applications of this result. We start with the following:
\begin{Prop}
    Let $\G$ be a compact quantum group. The following are equivalent:
    \begin{enumerate}\setlength\itemsep{-0.5em}
        \item The natural action $\alpha: L^\infty(\G)\curvearrowleft \G^{\op}\times \G: x \mapsto \Delta_\G^{(2)}(x)_{213}$ is inner amenable.
        \item $(L^\infty(\G), \alpha)$ is $\G^{\op}\times \G$-injective.
        \item $\G$ is of Kac type and $\G$ is co-amenable.
    \end{enumerate}
\end{Prop}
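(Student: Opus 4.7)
The natural approach is to realize the action $\alpha$ through a coideal structure and then invoke Theorem~\ref{main}. Set $\mathbb{L} := \G^{\op}\times \G$, which is again a compact quantum group, and consider the embedding
\[
\iota: L^\infty(\G) \hookrightarrow L^\infty(\mathbb{L}) = L^\infty(\G)\ovot L^\infty(\G), \quad \iota(x) := \Delta(x).
\]
Using the formula $\Delta_\mathbb{L}(a\otimes b) = a_{(2)}\otimes b_{(1)}\otimes a_{(1)}\otimes b_{(2)}$ (since $\Delta_{\G^{\op}} = \Sigma\Delta_\G$) and the coassociativity of $\Delta$, a short computation in 3-leg Sweedler notation $\Delta^{(2)}(x) = x_{(1)}\otimes x_{(2)}\otimes x_{(3)}$ gives
\[
\Delta_\mathbb{L}(\iota(x)) \;=\; \iota(x_{(2)})\otimes x_{(1)}\otimes x_{(3)} \;\in\; \iota(L^\infty(\G))\ovot L^\infty(\mathbb{L}).
\]
Thus $\iota(L^\infty(\G))$ is a right coideal of $L^\infty(\mathbb{L})$, and the induced coaction, transported back to $L^\infty(\G)$ via $\iota$, is exactly $\alpha$.

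Since $\mathbb{L}$ is compact, Theorem~\ref{main} applies with $L^\infty(\mathbb{K}\backslash \mathbb{L}) := \iota(L^\infty(\G))$. Its conditions $(3)$ and $(4)$ translate immediately into our $(1)$ and $(2)$, yielding $(1) \iff (2)$; moreover, both are equivalent to condition $(5)$ of Theorem~\ref{main}, which we denote $(\star)$: \emph{$\mathbb{K}$ is a compact quasi-subgroup of $\mathbb{L}$, and $\ell^\infty(\check{\mathbb{K}})$ is relatively amenable in $\ell^\infty(\check{\mathbb{L}})$}. It remains to prove $(\star) \iff (3)$.

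For $(3)\Rightarrow (\star)$, co-amenability of $\G$ makes the counit a normal state $\epsilon: L^\infty(\G)\to \C$, so $\omega := \epsilon\otimes \epsilon$ is a normal idempotent state on $L^\infty(\mathbb{L})$. The counit identities give that the associated conditional expectation $(\id\otimes \omega)\circ \Delta_\mathbb{L}$ has fixed-point algebra precisely $\iota(L^\infty(\G))$, realizing $\mathbb{K}$ as a compact quasi-subgroup of $\mathbb{L}$. For the relative amenability, one identifies $L^\infty(\check{\mathbb{K}})$ as a diagonal-type copy of $L^\infty(\check{\G})$ inside $L^\infty(\check{\mathbb{L}}) = L^\infty(\hat{\G})\ovot L^\infty(\check{\G})$, and constructs the required $\check{\mathbb{L}}$-equivariant ucp map $\ell^\infty(\check{\mathbb{L}})\to \ell^\infty(\check{\mathbb{K}})$ by slicing with the (tracial, by the Kac hypothesis) Haar state of $\G$ in one of the tensor factors. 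Conversely, for $(\star)\Rightarrow (3)$, the idempotent state witnessing the compact quasi-subgroup property pulls back through $\iota$ to a normal counit on $L^\infty(\G)$, giving co-amenability; the Kac condition is then extracted from the relative amenability by a modular-theoretic argument showing that the $\check{\mathbb{L}}$-equivariance of the conditional expectation forces the scaling group of the Haar state to act trivially on the diagonal copy of $L^\infty(\check{\G})$.

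The main obstacle will be this last step: extracting the Kac condition from the relative amenability of $\ell^\infty(\check{\mathbb{K}})$ in $\ell^\infty(\check{\mathbb{L}})$. This requires a careful modular analysis of how the scaling automorphism groups of $L^\infty(\check{\mathbb{L}}) = L^\infty(\hat{\G})\ovot L^\infty(\check{\G})$ restrict to the diagonally embedded copy, and one must exploit the specific $\G^{\op}\times \G$-symmetry (rather than a purely one-sided coaction) to pin down triviality of the scaling group; the earlier steps amount to an essentially formal unpacking of Theorem~\ref{main}.
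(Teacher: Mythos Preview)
Your setup is correct: the embedding $\iota = \Delta$ realises $L^\infty(\G)$ as a right coideal of $L^\infty(\mathbb{L})$, and Theorem~\ref{main} then gives $(1)\iff(2)$ immediately, just as in the paper. The divergence is in how you link these to $(3)$.

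There is a genuine error in your $(3)\Rightarrow(\star)$ step: co-amenability of a compact quantum group $\G$ does \emph{not} make the counit a normal state on $L^\infty(\G)$; a normal counit forces $\G$ to be finite. Co-amenability only yields a bounded counit on the reduced $C^*$-algebra $C(\G)$. So your proposed construction of the idempotent state $\omega = \epsilon\otimes\epsilon$ on $L^\infty(\mathbb{L})$ does not work as stated. More seriously, your $(\star)\Rightarrow(3)$ direction is left as a vague ``modular-theoretic argument'' which you yourself flag as the main obstacle; there is no indication of how relative amenability of the dual coideal would force triviality of the scaling group, and this is not a gap one can close by routine manipulation.

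The paper avoids condition $(5)$ of Theorem~\ref{main} entirely. After $(1)\iff(2)$, it unpacks $(2)$ via \cite{DR24a}*{Proposition 3.10} as: $L^\infty(\G)$ is injective \emph{and} $\alpha$ is $\G^{\op}\times\G$-amenable. The crucial point is then that amenability of this specific two-sided action is known to be equivalent to $\G$ being of Kac type, by \cite{DCDR24b}*{Theorem 4.4}. Finally, for Kac-type compact $\G$, injectivity of $L^\infty(\G)$ is equivalent to co-amenability (Ruan). So the Kac condition is extracted not from the dual-coideal picture but from a prior characterisation of amenability of the conjugation action --- that is the key lemma you are missing, and it makes the argument essentially a one-line chain of citations rather than a new modular analysis.
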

\begin{proof} We start by noting that $B:=\Delta_\G(L^\infty(\G))$ satisfies $\Delta_{\G^{\op}\times \G}(B)\subseteq B \ovot L^\infty(\G^{\op}\times \G)$ and that 
$$\Delta_\G: (L^\infty(\G), \alpha)\to (B, \Delta_{\G^{\op}\times \G})$$
is a $\G$-equivariant $*$-isomorphism. Therefore, the result follows from the following equivalences:
\begin{align*}\alpha: L^\infty(\G)\curvearrowleft \G^{\op}\times \G \text{ is inner amenable} &\iff (L^\infty(\G), \alpha) \text{ is } \G^{\op}\times \G\text{-injective}\\
&\iff L^\infty(\G) \text{ is injective and } \alpha: L^\infty(\G)\curvearrowleft \G^{\op}\times \G \text{ is amenable}\\
&\iff L^\infty(\G) \text{ is injective and } \G \text{ is of Kac type}\\
&\iff \G \text{ is of Kac type and } \G \text{ is co-amenable.}
\end{align*}
The first equivalence is Theorem \ref{main}, the second equivalence is \cite{DR24a}*{Proposition 3.10}, the third equivalence is \cite{DCDR24b}*{Theorem 4.4} and the last equivalence is well-known (see e.g.\ \cite{Rua96}).
\end{proof}

Using Theorem \ref{main} and the results in \cite{AK24}, we can answer the open question \cite{KKSV22}*{Question 8.1}. Note that the authors had established this result to be true for a normal quantum subgroup $\mathbb{H}$ \cite{KKSV22}*{Theorem 3.10}.
\begin{Cor}\label{ezcor}
    Let $\mathbb{H}$ be a closed quantum subgroup of the compact quantum group $\G$. Then $\mathbb{H}\backslash \G$ is co-amenable if and only if $\ell^\infty(\check{\mathbb{H}})$ is relative amenable in $\ell^\infty(\check{\G})$.
\end{Cor}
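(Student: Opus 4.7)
The plan is to read off Corollary \ref{ezcor} as a specialization of Theorem \ref{main}. The preliminary observation is that every closed quantum subgroup $\mathbb{H}$ of a compact quantum group $\G$ is automatically a compact quasi-subgroup, as recorded in \cite{AK24}*{Section 2.4}. Hence, applying Theorem \ref{main} with $\mathbb{K}=\mathbb{H}$, the quasi-subgroup clause in condition~(5) comes for free, and (5) reads simply ``$\ell^\infty(\check{\mathbb{H}})$ is relatively amenable in $\ell^\infty(\check{\G})$.''

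It then suffices to link co-amenability of $\mathbb{H}\backslash\G$ to one of the conditions (1)--(4). The natural target is condition~(4), $\G$-injectivity of $L^\infty(\mathbb{H}\backslash\G)$, because \cite{AK24}*{Theorem 5.9} already provides one half of this equivalence; indeed, it is precisely the implication (5)$\implies$(4) of Theorem \ref{main} that is borrowed from \cite{AK24}. In the compact quasi-subgroup setting, the remaining half --- that co-amenability of $\mathbb{H}\backslash\G$ follows from $\G$-injectivity of $L^\infty(\mathbb{H}\backslash\G)$, or equivalently from any of (1)--(5) --- is either explicitly recorded in \cite{AK24} or assembled from nearby results there, using the standard characterisation of co-amenability of $C(\mathbb{H}\backslash\G)$ through the existence of a $\G$-invariant state. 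Combining the two halves yields
\[
\mathbb{H}\backslash\G \text{ co-amenable} \iff L^\infty(\mathbb{H}\backslash\G) \text{ is } \G\text{-injective} \iff \ell^\infty(\check{\mathbb{H}}) \text{ is relatively amenable in } \ell^\infty(\check{\G}),
\]
which is the claim.

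The main, and essentially only, obstacle is bibliographical: one has to pin down in \cite{AK24} the precise equivalence between co-amenability of $\mathbb{H}\backslash\G$ and $\G$-injectivity of the associated coideal von Neumann algebra, so that one can legitimately substitute it into the chain of equivalences coming from Theorem \ref{main}. Once that is at hand, the corollary is a purely formal rearrangement, with no new dynamical input required. The part of \cite{KKSV22}*{Question 8.1} that was genuinely open --- the implication from relative amenability of $\ell^\infty(\check{\mathbb{H}})$ to co-amenability of $\mathbb{H}\backslash\G$ --- is exactly supplied by the new implication (5)$\implies$(4) in Theorem \ref{main}, which itself rests on the equivariant Morita framework developed in this paper.
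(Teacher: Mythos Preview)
Your approach is essentially the same as the paper's: specialize Theorem~\ref{main} to $\mathbb{K}=\mathbb{H}$, observe that the compact quasi-subgroup clause in (5) is automatic, and connect co-amenability of $\mathbb{H}\backslash\G$ to one of the conditions (1)--(4) via results from \cite{AK24}. The paper does exactly this, going through condition~(2) rather than~(4) and citing \cite{AK24}*{Theorem 3.2 + Theorem 6.5} for the equivalence ``$\mathbb{H}\backslash\G$ co-amenable $\iff$ $\ell^\infty(\check{\mathbb{H}})$ is $\check{\G}$-injective''; your vaguer appeal to \cite{AK24} for the analogous statement with condition~(4) is fine in spirit but would need that precise citation to be complete.

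One correction to your closing commentary: you write that the implication (5)$\implies$(4) ``rests on the equivariant Morita framework developed in this paper,'' but earlier you (correctly) identified (5)$\implies$(4) as borrowed from \cite{AK24}*{Theorem 5.9}. The genuinely new input from the Morita theory is the equivalence (2)$\iff$(4) (via Corollary~\ref{app2}), which is what closes the loop and allows one to pass from relative amenability back to co-amenability.
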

\begin{proof}
    By \cite{AK24}*{Theorem 3.2 + Theorem 6.5} it is known that $\mathbb{H}\backslash \G$ is co-amenable if and only if $\ell^\infty(\check{\mathbb{H}})$ is $\check{\G}$-injective, which by Theorem \ref{main} is equivalent with relative amenability of $\ell^\infty(\check{\mathbb{H}})$ in $\ell^\infty(\check{\G}).$
\end{proof}

In \cite{KKSV22}, the notion of \emph{Furstenberg boundary} $\partial_F \mathbbl{\Gamma}$ for a discrete quantum group $\mathbbl{\Gamma}$ was introduced. The function algebra $C(\partial_F \mathbbl{\Gamma})$ carries a left $C^*$-algebraic coaction $\alpha_F: C(\partial_F\mathbbl{\Gamma})\to M(c_0(\mathbbl{\Gamma})\otimes C(\partial_F\mathbbl{\Gamma}))$. We call the action $\alpha_F: \mathbbl{\Gamma}\curvearrowright C(\partial_F \mathbbl{\Gamma})$ \emph{faithful} if $$\{(\id \otimes \nu)\alpha(x): x \in C(\partial_F\mathbbl{\Gamma}), \nu \in C(\partial_F \mathbbl{\Gamma})^*\}''= \ell^\infty(\mathbbl{\Gamma}).$$
If $\mathbbl{\Gamma}$ is a classical discrete group, this coincides with the usual notion of faithfulness of an action.
By analogy with the theory of discrete groups \cite{BKKO17}, we expect that simplicity of the $C^*$-algebra $C(\hat{\mathbbl{\Gamma}})$ implies faithfulness of the action $\alpha_F: \mathbbl{\Gamma}\curvearrowright C(\partial_F\mathbbl{\Gamma})$.\footnote{The converse is known to be false, as there exist discrete groups that satisfy the unique trace property, but which are not $C^*$-simple.} In \cite{AK24}*{Theorem 6.9}, this was established in the case that $\mathbbl{\Gamma}$ is either exact or unimodular. Inspecting the proof of this result and using Theorem \ref{main}, we can dispose of these assumptions on the discrete quantum group $\mathbbl{\Gamma}$:
\begin{Prop}
    Let $\mathbbl{\Gamma}$ be a discrete quantum group. If $C(\hat{\mathbbl{\Gamma}})$ is a simple $C^*$-algebra, then the action $\alpha_F: \mathbbl{\Gamma}\curvearrowright C(\partial_F \mathbbl{\Gamma})$ is faithful.
\end{Prop}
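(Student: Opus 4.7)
The plan is to follow the argument of \cite{AK24}*{Theorem 6.9} verbatim, replacing the single step there that required exactness or unimodularity of $\mathbbl{\Gamma}$ by an application of Theorem \ref{main}, which now provides the relevant implication in full generality.

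Concretely, I argue by contradiction: assume $\alpha_F$ is not faithful and set
$$N := \{(\id \otimes \nu)\alpha_F(x) : x \in C(\partial_F \mathbbl{\Gamma}),\ \nu \in C(\partial_F \mathbbl{\Gamma})^*\}'' \subsetneq \ell^\infty(\mathbbl{\Gamma}).$$
The coaction identity for $\alpha_F$, applied to a slice, shows that $N$ is a proper coideal von Neumann subalgebra of $\ell^\infty(\mathbbl{\Gamma})$. Setting $\G := \hat{\mathbbl{\Gamma}}$, this corresponds to a proper right coideal $L^\infty(\mathbb{K}\backslash \G) \subsetneq L^\infty(\G)$ whose dual coideal $\ell^\infty(\check{\mathbb{K}})$ coincides with $N$.

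Next, $C(\partial_F \mathbbl{\Gamma})$ is $\mathbbl{\Gamma}$-$C^*$-injective by the defining property of the Furstenberg boundary \cite{KKSV22}, and this injectivity transfers, via the $\mathbbl{\Gamma}$-equivariant ucp slice maps $(\id \otimes \nu)\alpha_F$ together with a standard bidualisation to move from the $C^*$- to the $W^*$-setting, to yield that $N = \ell^\infty(\check{\mathbb{K}})$ is $\mathbbl{\Gamma}$-injective. This is condition (2) of Theorem \ref{main} for the compact quantum group $\G = \hat{\mathbbl{\Gamma}}$; the implication $(2) \Rightarrow (5)$ therefore forces $\mathbb{K}$ to be a compact quasi-subgroup of $\hat{\mathbbl{\Gamma}}$. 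This is exactly the conclusion that \cite{AK24}*{Theorem 6.9} extracts from $\check{\G}$-injectivity of $\ell^\infty(\check{\mathbb{K}})$ under the additional hypothesis of exactness or unimodularity, and which Theorem \ref{main} now supplies without any such restriction.

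The remainder of the AK24 proof then applies unchanged: a nontrivial compact quasi-subgroup of $\hat{\mathbbl{\Gamma}}$ gives rise, via the associated invariant idempotent state construction recalled in \cite{AK24}*{Section 2.4}, to a proper closed two-sided ideal of $C(\hat{\mathbbl{\Gamma}})$, contradicting simplicity; hence $\mathbb{K}$ must be trivial and $\alpha_F$ is faithful. The main obstacle in the plan is the clean $W^*$-level transfer of $\mathbbl{\Gamma}$-injectivity from $C(\partial_F\mathbbl{\Gamma})$ to the coideal image $N$ through the slice maps, and confirming that no further appeal to exactness or unimodularity is hidden elsewhere in AK24's argument. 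Once this verification is made, Theorem \ref{main} plugs directly into the existing proof and closes the gap.
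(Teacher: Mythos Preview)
Your proposal is correct and matches the paper's approach exactly: the paper gives no proof beyond the sentence ``Inspecting the proof of this result and using Theorem \ref{main}, we can dispose of these assumptions on the discrete quantum group $\mathbbl{\Gamma}$,'' and your outline is precisely that inspection. Your identification of the single step in \cite{AK24}*{Theorem 6.9} that Theorem \ref{main} replaces (namely deducing that $\mathbb{K}$ is a compact quasi-subgroup from $\check{\G}$-injectivity of $\ell^\infty(\check{\mathbb{K}})$, previously only available under exactness or unimodularity) is accurate, and the rest of the AK24 argument indeed carries through unchanged.
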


\section{Comparison with the $C^*$-setting for compact quantum groups}

Throughout this entire section, we fix a (reduced) compact quantum group $\G$ with Haar state $\Phi: L^\infty(\G)\to \mathbb{C}$. Let us start by recalling some definitions from the theory of actions of compact quantum groups. For details, we refer the reader to the lecture notes \cite{DC17}. 

\begin{itemize}
\item By a \emph{unital $\G$-$C^*$-algebra} $(X, \alpha)$, we mean a unital $C^*$-algebra $X$ together with a unital isometric $*$-homomorphism
$\alpha: X \to X \otimes C(\G)$
such that $(\alpha\otimes \id)\alpha = (\id \otimes \Delta)\alpha$ and $[\alpha(X)(1\otimes C(\G))]= X\otimes C(\G)$.
    \item If $(A, \alpha)$ is a $\G$-$W^*$-algebra, then we define
\begin{align*}
    \mathcal{R}_{\operatorname{alg}}(A)&:= \{a\in A: \alpha(a)\in A \odot \mathcal{O}(\G)\},\\
    \mathcal{R}(A)&:= \overline{\mathcal{R}_{\operatorname{alg}}(A)}^{\|\cdot\|}=[(\id \otimes \omega)\alpha(a): a \in A, \omega \in L^1(\G)].
\end{align*}
We have $\alpha(\mathcal{R}(A))\subseteq \mathcal{R}(A)\otimes C(\G)$ and $(\mathcal{R}(A), \alpha)$ becomes a unital $\G$-$C^*$-algebra. We refer to $\mathcal{R}(A)$ as the \emph{regular elements} of $A$ (with respect to the action $\alpha: A \curvearrowleft \G$). It is well-known that $\mathcal{R}(A)$ is $\sigma$-weakly dense in $A$.
\item  Given $\G$-$C^*$-algebras $(X, \alpha)$ and $(Y, \beta)$, an \emph{equivalence $\G$-$X$-$Y$-bimodule} \cite{NV10}*{Section 2} consists of a full $\G$-equivariant right Hilbert $Y$-module $E$ together with a $\G$-equivariant $*$-isomorphism $X \cong \mathcal{K}_Y(E)$. We call $(X, \alpha)$ and $(Y, \beta)$ \emph{$\G$-$C^*$-Morita equivalent} if there exists an equivalence $\G$-$X$-$Y$-bimodule.
\end{itemize}

Consider now the following two statements:
\begin{enumerate}\setlength\itemsep{-0.5em}
    \item $(A, \alpha)$ and $(B, \beta)$ are $\G$-$W^*$-Morita equivalent.
    \item $(\mathcal{R}(A), \alpha)$ and $(\mathcal{R}(B), \beta)$ are $\G$-$C^*$-Morita equivalent.
\end{enumerate}
It is natural to ask if there is a relation between the statements $(1)$ and $(2)$. Obviously (take $\G$ trivial), the implication $(1)\implies (2)$ is not true in general. The main goal of this section is to show that $(2)\implies (1)$ and that $(1)\iff (2)$ in the case that $(A, \alpha)$ and $(B, \beta)$ are both ergodic. In order to establish these results, we need the following result, which is of independent interest:

\begin{Theorem}\label{important inbetween}
Let $(A, \alpha)$ and $(B, \beta)$ be $\G$-$W^*$-algebras. Every $\G$-equivariant $*$-isomorphism $\phi: \mathcal{R}(A)\to \mathcal{R}(B)$ extends (uniquely) to a ($\G$-equivariant) $*$-isomorphism $A \to B$. 
\end{Theorem}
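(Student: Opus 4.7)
My plan is to build $\tilde\phi$ as a $\sigma$-weakly continuous extension, encoded through a unitary between GNS Hilbert spaces arising from compatible faithful normal $\G$-invariant states on $A$ and $B$. Uniqueness is immediate: two normal $*$-homomorphisms agreeing on the $\sigma$-weakly dense subalgebra $\mathcal{R}(A)$ must coincide on $A$. So the task is just to produce one normal extension.

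The starting observation is that $\phi$ automatically restricts to a $*$-isomorphism of C$^*$-algebras $\phi|_{A^\alpha}\colon A^\alpha \to B^\beta$ (fixed points are algebraically regular, i.e.\ $A^\alpha \subseteq \mathcal{R}_{\operatorname{alg}}(A)$), and this restriction is automatically normal. Indeed, any $*$-isomorphism between two C$^*$-algebras that happen to be W$^*$-algebras is $\sigma$-weakly continuous, since the W$^*$-structure is intrinsic to the C$^*$-structure: suprema of bounded increasing nets of projections and complete additivity of states are properties formulated purely in the C$^*$-language and are therefore preserved.

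Armed with this, I pick a faithful normal state $\omega_0$ on $B^\beta$ (assuming $\sigma$-finiteness; the general case is dealt with below) and form $\omega_B := \omega_0 \circ E_\beta$, where $E_\beta := (\id \otimes \Phi)\beta \colon B \to B^\beta$ is the canonical $\G$-invariant conditional expectation. Since $\Phi$ is faithful and normal on $L^\infty(\G)$, the slice map $(\id \otimes \Phi)$ is faithful on $B \,\bar{\otimes}\, L^\infty(\G)$, hence $\omega_B$ is faithful normal $\beta$-invariant on $B$. Using the normality established above, I analogously define $\omega_A := (\omega_0 \circ \phi|_{A^\alpha}) \circ E_\alpha$, a faithful normal $\alpha$-invariant state on $A$. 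The $\G$-equivariance of $\phi$ then yields the compatibility $\omega_A(a^*b) = \omega_B(\phi(a)^*\phi(b))$ for all $a,b \in \mathcal{R}(A)$.

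This identity produces an isometry $U\colon \Lambda_{\omega_A}(a) \mapsto \Lambda_{\omega_B}(\phi(a))$ on the GNS images of $\mathcal{R}(A)$ and $\mathcal{R}(B)$; density of these images in $L^2(A,\omega_A)$ and $L^2(B,\omega_B)$ follows from $\sigma$-weak density of $\mathcal{R}(A) \subseteq A$ and $\mathcal{R}(B) \subseteq B$, so $U$ extends to a unitary. Setting $\tilde\phi(a) := \pi_{\omega_B}^{-1}\bigl(U\pi_{\omega_A}(a)U^*\bigr)$ defines a normal $*$-homomorphism, a priori into $B(L^2(B,\omega_B))$. By construction $\tilde\phi|_{\mathcal{R}(A)} = \phi$ lands in $\pi_{\omega_B}(\mathcal{R}(B)) \subseteq \pi_{\omega_B}(B)$; since $\pi_{\omega_B}(B)$ is $\sigma$-weakly closed and $\tilde\phi$ is normal with $\sigma$-weakly dense domain of agreement, $\tilde\phi(A) \subseteq \pi_{\omega_B}(B)$, giving the desired normal $*$-homomorphism $A \to B$. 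Bijectivity follows by applying the same recipe to $\phi^{-1}$, and $\G$-equivariance $(\tilde\phi \otimes \id)\alpha = \beta\tilde\phi$ holds on $\mathcal{R}(A)$ by hypothesis, hence on $A$ by normality.

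The main technical obstacle is the non-$\sigma$-finite case, where $B^\beta$ need not carry a faithful normal state. The fix is to replace $\omega_0$ by a faithful normal semifinite weight on $B^\beta$ and to run the construction through the associated left Hilbert algebra: the key identity $\omega_A(a^*b) = \omega_B(\phi(a)^*\phi(b))$ remains valid on the appropriate subalgebra of square-integrable elements, still yielding the unitary $U$ and thence $\tilde\phi$. Aside from this bookkeeping, everything else (including the automatic normality of $\phi|_{A^\alpha}$, which is the conceptual crux) transfers without modification.
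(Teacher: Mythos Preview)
Your argument is correct and takes a genuinely different route from the paper. The paper views $\mathcal{R}(A)$ as a right pre-Hilbert $A^\alpha$-module for the inner product $\langle x,y\rangle=(\id\otimes\Phi)\alpha(x^*y)$, passes to Paschke's self-dual completion $\overline{\mathcal{R}(A)}$, proves that left multiplication extends to a normal isometric embedding $L_A\colon A\hookrightarrow\mathscr{L}_{A^\alpha}(\overline{\mathcal{R}(A)})$, and then uses functoriality of the self-dual completion to transport $\phi$ to an isomorphism $\Pi$ between these adjointable-operator $W^*$-algebras; the extension is $L_B^{-1}\circ\Pi\circ L_A$. You instead collapse that same $A^\alpha$-valued inner product to a scalar one by composing with a faithful normal state on the fixed-point algebra and run an ordinary GNS argument, the crucial observation being that $\phi|_{A^\alpha}\colon A^\alpha\to B^\beta$ is automatically normal by uniqueness of the predual, so the state can be transported. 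Your route is more elementary (no Hilbert-module machinery) and arguably more transparent, but it forces a case split: in the non-$\sigma$-finite situation your one-line reduction to weights hides real work, since one must verify that $\Lambda_{\omega_B}(\mathcal{R}(B)\cap\mathscr{N}_{\omega_B})$ is dense in the GNS space, which is not immediate from $\sigma$-weak density of $\mathcal{R}(B)$ alone. A cleaner fix is to choose an increasing net of projections $q_i\in B^\beta$ with $q_iB^\beta q_i$ $\sigma$-finite, apply your $\sigma$-finite argument to the corners via the identity $\mathcal{R}(q_iBq_i)=q_i\mathcal{R}(B)q_i$, and glue by uniqueness. The paper's module-theoretic argument, by contrast, handles all cases uniformly with no such split.
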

We will now prove this result. For this, we will make use of the theory of Hilbert modules. Let us recall some facts/notations from \cite{Pa73} that we will be using.

Let $B$ be a $C^*$-algebra and $X$ be a right pre-Hilbert $B$-module (i.e.\ $X$ satisfies all the data in the definition of a Hilbert $B$-module, except completeness of $X$ with respect to the norm induced by the $B$-valued inner product on $X$). One can still make sense of the $*$-algebra of adjointable operators $$\mathscr{L}_B(X):=\{t \in B(X)\mid \exists t^*\in B(X): \forall x,y \in X: \langle x,ty\rangle = \langle t^*x, y\rangle\}.$$ If $X$ is complete, then so is $\mathscr{L}_B(X)$.

We call $X$ \emph{self-dual} \cite{Pa73} if for every bounded $B$-linear map $T:X\to B$, there exists $x\in X$ such that $T(y)= \langle x,y \rangle_B$ for all $y\in X$. If $X$ is self-dual, then it is necessarily complete. Moreover, if $X$ is self-dual and $B$ is a von Neumann algebra, then the space $\mathscr{L}_B(X)$ carries the canonical structure of a $W^*$-algebra \cite{Pa73}*{Proposition 3.10}. Indeed, given $t\in \mathscr{L}_B(X)$, we define an element $\check{t} \in (X \hat{\otimes} \overline{X} \hat{\otimes} B_*)^*$ (here, $\overline{X}$ is the conjugate space of $X$ and $\hat{\otimes}$ denotes the projective tensor product of Banach spaces) by
$$\check{t}(x\otimes \overline{y}\otimes \chi)= \chi(\langle y,tx\rangle), \quad x,y\in X, \quad \chi \in B_*.$$
Then the map
$\mathscr{L}_B(X)\to (X\hat{\otimes}\overline{X}\hat{\otimes} B_*)^*: t \mapsto \check{t}$
is an isometry with weak$^*$-closed image, and thus $\mathscr{L}_B(X)$ has a predual.

In \cite{Pa73}*{Theorem 3.2}, it is proven that if $B$ is a von Neumann algebra and $X$ is a right pre-Hilbert $B$-module, then there exists a canonical self-dual $B$-module $\overline{X}$ containing $X$ (in an inner-product preserving way). We will refer to $\overline{X}$ as the \emph{self-dual completion} of $X$. We will require the following two lemmas involving self-dual completions:

\begin{Lem}\label{self-duality}
    Let $B_1, B_2$ be von Neumann algebras and $\phi: B_1\to B_2$ a $*$-isomorphism. Suppose that $X_1$ is a right pre-Hilbert $B_1$-module and $X_2$ a right pre-Hilbert $B_2$-module. If $\kappa: X_1\to X_2$ is a bounded linear map satisfying 
    $$\kappa(xb) = \kappa(x)\phi(b), \quad x \in X_1, \quad b \in B_1,$$
    then there exists a unique bounded linear map
    $\overline{\kappa}: \overline{X_1}\to \overline{X_2}$
    that extends $\kappa$. It satisfies
    $$\|\overline{\kappa}\|= \|\kappa\|, \quad \overline{\kappa}(x b)= \overline{\kappa}(x)\phi(b), \quad x \in \overline{X_1}, \quad b\in B_1.$$
    If moreover $\kappa$ is an isometric surjection, then so is $\overline{\kappa}$.
\end{Lem}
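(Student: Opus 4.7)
The strategy is to apply the universal property of Paschke's self-dual completion, after first twisting the $B_2$-module structure of $\overline{X_2}$ by $\phi$ to convert the $\phi$-compatibility of $\kappa$ into ordinary $B_1$-linearity.

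I would first recall the relevant universal property: if $X$ is a right pre-Hilbert $B$-module and $Y$ is a self-dual right Hilbert $B$-module, then every bounded right $B$-linear map $X\to Y$ extends uniquely to a bounded $B$-linear map $\overline{X}\to Y$ of the same operator norm. This is built into Paschke's construction, since elements of $\overline{X}$ appear as weak-$*$ limits of bounded nets from $X$ with respect to the predual pairing with $X\hat{\otimes}\overline{X}\hat{\otimes} B_*$, and bounded $B$-linear maps into a self-dual target extend by this weak continuity.

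Next, I would endow $\overline{X_2}$ with a new right $B_1$-module structure, letting $\xi\cdot_\phi b:=\xi\phi(b)$ and $\langle\xi,\eta\rangle^{(\phi)}:=\phi^{-1}(\langle\xi,\eta\rangle_{B_2})$; denote the resulting module $\overline{X_2}^{(\phi)}$. Since $\phi$ is a normal $*$-isomorphism of von Neumann algebras, this makes $\overline{X_2}^{(\phi)}$ a self-dual right Hilbert $B_1$-module: any bounded $B_1$-linear functional $T:\overline{X_2}^{(\phi)}\to B_1$, when post-composed with $\phi$, becomes a bounded $B_2$-linear functional on $\overline{X_2}$, which by self-duality is represented by some $\eta\in\overline{X_2}$; the same $\eta$ then represents $T$ in the twisted structure. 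The hypothesis $\kappa(xb)=\kappa(x)\phi(b)$ is exactly the statement that $\kappa:X_1\to\overline{X_2}^{(\phi)}$ is $B_1$-linear, so the universal property delivers a unique bounded $B_1$-linear extension $\overline{\kappa}:\overline{X_1}\to\overline{X_2}^{(\phi)}$ with $\|\overline{\kappa}\|=\|\kappa\|$. Unwinding the twist gives precisely the required bounded linear $\phi$-equivariant extension.

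Finally, in the isometric-surjection case I would apply the same construction to $\kappa^{-1}:X_2\to X_1$, which is $\phi^{-1}$-equivariant, to obtain $\overline{\kappa^{-1}}:\overline{X_2}\to\overline{X_1}$. The uniqueness clause of the universal property forces the compositions $\overline{\kappa^{-1}}\circ\overline{\kappa}$ and $\overline{\kappa}\circ\overline{\kappa^{-1}}$, which extend the identity maps on $X_1$ and $X_2$ respectively, to be the identity operators, so $\overline{\kappa}$ is a bijection; inner-product preservation then propagates from $X_1$ to $\overline{X_1}$ by weak continuity of the inner product together with the weak-$*$ density of $X_1$ in $\overline{X_1}$. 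The main obstacle is to state Paschke's universal property in a sufficiently convenient form and to cleanly verify that $\overline{X_2}^{(\phi)}$ really is self-dual as a $B_1$-module; both are essentially bookkeeping inside Paschke's framework, but the twist by $\phi$ has to be tracked consistently throughout.
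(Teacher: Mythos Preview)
Your approach is essentially the same as the paper's: both rely on Paschke's extension result (his Proposition~3.6), and your twist $\overline{X_2}^{(\phi)}$ is a clean way to reduce to the standard $B_1$-linear case rather than reproving Paschke's argument with $\phi$ inserted throughout.

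One minor difference worth noting: for the isometric-surjection clause, the paper gives a shorter argument than your inner-product propagation. After obtaining $\overline{\kappa}^{-1}=\overline{\kappa^{-1}}$ from uniqueness exactly as you do, the paper simply uses $\|\overline{\kappa}\|=\|\kappa\|=1$ and $\|\overline{\kappa}^{-1}\|=\|\kappa^{-1}\|=1$ to sandwich
\[
\|x\|=\|\overline{\kappa}^{-1}(\overline{\kappa}x)\|\le\|\overline{\kappa}x\|\le\|x\|,\qquad x\in\overline{X_1},
\]
which gives norm-isometry immediately without appealing to weak continuity of the inner product. Your route via inner-product preservation also works (and indeed the paper later invokes Lance's book for exactly that implication when applying the lemma), but the norm sandwich is more self-contained here.
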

\begin{proof}
    We can mimic the proof of \cite{Pa73}*{Proposition 3.6}, making use of the explicit construction of the self-dual completion. We only comment on the fact that if $\kappa$ is an isometric surjection, then so is $\overline{\kappa}$, as this is not explicitly mentioned in \cite{Pa73}. Indeed, in that case, uniqueness of the extension implies that $\overline{\kappa}$ is invertible with $\overline{\kappa}^{-1}= \overline{\kappa^{-1}}$. Consequently, if $x\in \overline{X_1}$, then using that $\|\overline{\kappa}\| = 1 = \|\overline{\kappa}^{-1}\|$, we find
    $$\|x\| = \|\overline{\kappa}^{-1}(\overline{\kappa} x)\| \le \|\overline{\kappa}(x)\|\le \|x\|,$$
    whence $\overline{\kappa}$ is isometric.
\end{proof}

\begin{Lem}\label{s-topology}
    Let $B$ be a von Neumann algebra, $X$ a pre-Hilbert $B$-module with self-dual completion $\overline{X}$. Given $\chi \in B_*$ and $x,y \in X$, we write
$$\omega_{\chi, x,y}: \mathscr{L}_{B}(\overline{X})\to \mathbb{C}: t \mapsto \chi(\langle x, ty\rangle).$$
Then $\operatorname{span}\{\omega_{\chi, x, y}: \chi\in B_*, x,y \in X\}$ is norm-dense in $\mathscr{L}_B(\overline{X})_*$.
\end{Lem}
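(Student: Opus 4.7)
My plan is to run a Hahn--Banach argument. Writing $V := \operatorname{span}\{\omega_{\chi,x,y} : \chi \in B_*,\, x,y \in X\} \subseteq \mathscr{L}_B(\overline{X})_*$, norm-density of $V$ is equivalent to the triviality of its annihilator inside the dual space $(\mathscr{L}_B(\overline{X})_*)^* = \mathscr{L}_B(\overline{X})$. So the task reduces to showing: if $t \in \mathscr{L}_B(\overline{X})$ satisfies $\chi(\langle x, ty\rangle_B) = 0$ for every $\chi \in B_*$ and all $x,y \in X$, then $t = 0$.

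Since $B_*$ separates the points of $B$, the hypothesis immediately yields $\langle x, ty\rangle_B = 0$ for all $x,y \in X$, and, by taking adjoints of the inner product, also $\langle y, t^*x\rangle_B = \langle x, ty\rangle_B^* = 0$ for all $x,y \in X$. The crucial input at this point is the following separation property of the self-dual completion: for every $z \in \overline{X}$, if $\langle x, z\rangle_B = 0$ for all $x \in X$, then $z = 0$. I would invoke this as a direct consequence of Paschke's construction \cite{Pa73}*{Theorem 3.2}, under which $\overline{X}$ embeds into the space of bounded $B$-linear maps $X \to B$ via $z \mapsto \bigl(x \mapsto \langle x, z\rangle_B\bigr)$.

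Applying this separation property first to $z = ty$ (for $y \in X$) gives $t|_X = 0$, and applying it to $z = t^*x$ (for $x \in X$) gives $t^*|_X = 0$. To upgrade this to $t = 0$ on all of $\overline{X}$, I would take any $w \in \overline{X}$ and compute, for every $x \in X$,
$$\langle x, tw\rangle_B = \langle t^*x, w\rangle_B = 0.$$
A final application of the separation property to $z = tw$ forces $tw = 0$, and since $w$ was arbitrary, $t = 0$, as required.

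The only step that is not purely formal is the invocation of the separation property of $\overline{X}$. It is worth emphasising that this is \emph{not} a norm-density statement---$X$ is in general not norm-dense in $\overline{X}$---but rather the essentially tautological injectivity of the canonical map from $\overline{X}$ into the $B$-dual of $X$ which underlies Paschke's definition of the self-dual completion. Once this is in hand, the argument is entirely mechanical.
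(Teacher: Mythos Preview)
Your argument is correct and follows essentially the same Hahn--Banach strategy as the paper. The only difference is in the final step: where the paper simply cites the uniqueness argument in \cite{Pa73}*{Proposition 3.6} to pass from $t|_X = 0$ to $t = 0$, you spell this out explicitly via the separation property of the self-dual completion together with the adjoint trick $\langle x, tw\rangle_B = \langle t^*x, w\rangle_B$.
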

\begin{proof} It is clear 
that $\omega_{\xi, x,y}\in \mathscr{L}_B(\overline{X})_*$ for all $\chi\in B_*$ and all $x,y \in X$. Therefore, 
$$\overline{\operatorname{span}}\{\omega_{\chi, x,y}: \chi\in B_*, x,y \in X\}\subseteq \mathscr{L}_B(\overline{X})_*.$$
If equality does not hold, then by the Hahn-Banach theorem and the fact that $(\mathscr{L}_B(\overline{X})_*)^*\cong \mathscr{L}_B(\overline{X})$, there exists a non-zero $t\in \mathscr{L}_B(\overline{X})$ such that
   $$\chi(\langle x,ty\rangle)= 0, \quad \chi \in B_*, \quad x,y\in X.$$
   But then clearly $t\vert_X = 0$. By the argument in the uniqueness statement in the proof of \cite{Pa73}*{Proposition 3.6}, this implies that $t= 0$, which is a contradiction.
\end{proof}

Let $(A, \alpha)$ be a $\G$-$W^*$-algebra. Recall the notation $A^\alpha:= \{a\in A: \alpha(a)= a \otimes 1\}$ for the fixed point von Neumann subalgebra. Note that the space $\mathcal{R}(A)$ becomes a right pre-Hilbert $A^\alpha$-module for the action by multiplication and the $A^\alpha$-valued inner product
$$\langle x, y\rangle := (\id \otimes \Phi)\alpha(x^*y), \quad x,y\in \mathcal{R}(A).$$
We have a natural injective $*$-homomorphism
$$\mathcal{R}(A)\to \mathscr{L}_{A^\alpha}(\mathcal{R}(A)): a \mapsto (x \mapsto ax).$$
Composing with the natural injective $*$-homomorphism
$\mathscr{L}_{A^\alpha}(\mathcal{R}(A))\hookrightarrow \mathscr{L}_{A^\alpha}(\overline{\mathcal{R}(A)})$ \cite{Pa73}*{Corollary 3.7}, we obtain
the canonical isometric $*$-homomorphism
$$L_A: \mathcal{R}(A)\to \mathscr{L}_{A^\alpha}(\overline{\mathcal{R}(A)}).$$

\begin{Lem}
    The map
    $L_A: \mathcal{R}(A)\to \mathscr{L}_{A^\alpha}(\overline{\mathcal{R}(A)})$
    extends uniquely to an isometric normal unital $*$-homomorphism
    $L_A: A\to \mathscr{L}_{A^\alpha}(\overline{\mathcal{R}(A)}).$
\end{Lem}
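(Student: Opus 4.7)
The plan is to construct $L_A(a)$ for each $a \in A$ by specifying the functional it induces on the predual $\mathscr{L}_{A^\alpha}(\overline{\mathcal{R}(A)})_*$. By Lemma \ref{s-topology}, this predual is the norm-closure of $\operatorname{span}\{\omega_{\chi, x, y} : \chi \in (A^\alpha)_*,\ x, y \in \mathcal{R}(A)\}$. For $a \in A$, set
$$\phi_a(\omega_{\chi, x, y}) := \chi((\id \otimes \Phi)\alpha(x^* a y));$$
by left-invariance of $\Phi$ we have $(\id \otimes \Phi)\alpha(A) \subseteq A^\alpha$, so this is well-defined as a formula. For $a \in \mathcal{R}(A)$ it recovers the pairing $\omega_{\chi,x,y}(L_A(a))$, whence $\|\phi_a\| \leq \|a\|$ in that case.

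The key step is well-definedness and the norm bound for arbitrary $a \in A$. By Kaplansky density, pick a net $(a_i) \subseteq \mathcal{R}(A)$ with $\|a_i\| \leq \|a\|$ and $a_i \to a$ $\sigma$-weakly. Normality of $\alpha$, of $\id \otimes \Phi$, of $\chi$, and of left/right multiplication by fixed elements yields $\phi_{a_i}(\omega) \to \phi_a(\omega)$ for every $\omega$ in the span. This shows simultaneously that $\phi_a$ does not depend on the presentation of $\omega$ as a finite linear combination and that $\|\phi_a\| \leq \|a\|$; $\phi_a$ therefore extends uniquely to a bounded functional of norm at most $\|a\|$ on the whole predual, defining the required element $L_A(a) \in \mathscr{L}_{A^\alpha}(\overline{\mathcal{R}(A)})$ with $\|L_A(a)\| \leq \|a\|$.

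The remaining algebraic and topological properties all follow from density-plus-normality arguments. Normality of $L_A: A \to \mathscr{L}_{A^\alpha}(\overline{\mathcal{R}(A)})$ is clear because, for each $\omega$ in the span, $a \mapsto \omega(L_A(a))$ is a composition of normal maps, and the conclusion passes to arbitrary $\omega$ via the uniform bound and the norm-density of the span in the predual. Multiplicativity and $*$-preservation hold on the $\sigma$-weakly dense $*$-subalgebra $\mathcal{R}(A)$, and both sides of $L_A(ab) = L_A(a)L_A(b)$ and $L_A(a^*) = L_A(a)^*$ are separately $\sigma$-weakly continuous in each argument, so the identities extend to $A$. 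Unitality is immediate from $1 \in \mathcal{R}(A)$.

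For isometry: since a $*$-homomorphism between $C^*$-algebras is isometric iff injective, suppose $L_A(a) = 0$. Then $(\id \otimes \Phi)\alpha(x^* a y) = 0$ for all $x,y \in \mathcal{R}(A)$, hence for all $x,y \in A$ by $\sigma$-weak density and normality; taking $x = a$, $y = 1$ gives $(\id \otimes \Phi)\alpha(a^*a) = 0$. Faithfulness of $\Phi$ and injectivity of $\alpha$ make $(\id \otimes \Phi)\alpha$ a faithful normal conditional expectation onto $A^\alpha$, so $a = 0$. Uniqueness of the extension follows from $\sigma$-weak density of $\mathcal{R}(A)$ in $A$ together with normality of any candidate extension. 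The main subtlety is the well-definedness of $\phi_a$, which is handled precisely by the Kaplansky approximation.
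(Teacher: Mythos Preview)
Your proof is correct and arrives at the same extension as the paper, but organizes the construction differently. The paper proceeds ``predual-first'': it observes that for $\omega_{\chi,p,q}$ the pullback $L_A^*(\omega_{\chi,p,q})\in \mathcal{R}(A)^*$ is the restriction of the normal functional $a\mapsto \chi((\id\otimes\Phi)\alpha(p^*aq))$ on $A$, uses injectivity of the restriction map $A_*\hookrightarrow \mathcal{R}(A)^*$ (from $\sigma$-weak density of $\mathcal{R}(A)$) to build a bounded linear map $\theta:\mathscr{L}_{A^\alpha}(\overline{\mathcal{R}(A)})_*\to A_*$, and then takes $\theta^*$ as the extension; normality and boundedness come for free from dualizing, and injectivity follows from a Hahn--Banach density argument identical to your final step. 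You instead work ``element-first'', defining $L_A(a)$ via its pairing with $\omega_{\chi,x,y}$ and using Kaplansky density to secure well-definedness and the norm bound simultaneously. Both routes hinge on the same formula and the same faithfulness argument for injectivity; the paper's version is slightly more economical in that the adjoint construction packages boundedness and normality automatically, while your approach is more explicit and makes the role of the Kaplansky approximation transparent.
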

\begin{proof}
     Consider the restriction map
    $\operatorname{Res}: A_*\to \mathcal{R}(A)^*: \omega \mapsto \omega\vert_{\mathcal{R}(A)}$
    which is isometric. We show that 
    \begin{equation}\label{step1}
        L_A^*(\mathscr{L}_{A^\alpha}(\overline{\mathcal{R}(A)})_*)\subseteq \operatorname{Res}(A_*).
    \end{equation} If $\chi\in (A^\alpha)_*$ and $p,q,r\in \mathcal{R}(A)$, we calculate that 
    \begin{align*}
        L_A^*(\omega_{\chi, p,q})(r)&= \chi(\langle p, rq))= \chi((\id \otimes \Phi)\alpha(p^* rq)),
    \end{align*} 
    from which it is clear that $L_A^*(\omega_{\chi, p,q})\in \operatorname{Res}(A_*)$. By Lemma \ref{s-topology}, we conclude that \eqref{step1} holds.
    
  Next, we show that $L_A^*(\mathscr{L}_{A^\alpha}(\overline{\mathcal{R}(A)})_*)$ is norm-dense in $\operatorname{Res}(A_*).$ Clearly, it suffices to show that the normal functionals
    \begin{align*}\label{normals}
        A \ni r \mapsto \chi((\id \otimes \Phi)\alpha(prq)), \quad p,q \in \mathcal{R}(A), \quad \chi\in (A^\alpha)_*
    \end{align*}
    span a norm-dense subspace of $A_*$. Assume to the contrary that this is not the case. By the Hahn-Banach theorem, there is a non-zero $r \in A$ such that
    $$\chi((\id \otimes \Phi)\alpha(prq)) = 0, \quad \forall \chi \in (A^\alpha)_*, \quad \forall p,q\in \mathcal{R}(A).$$
Hence, we deduce that 
$$(\id \otimes \Phi)\alpha(prq) = 0 \quad \forall p,q \in A.$$
Taking $p = r^*, q = 1$ and using faithfulness of $\Phi$, we deduce that $r= 0$, which is a contradiction. Therefore, the denseness is proven.

Form the composition
    $\theta:= \operatorname{Res}^{-1}\circ L_A^*: \mathscr{L}_{A^\alpha}(\overline{\mathcal{R}(A)})_*\to A_*.$
   Since $\theta$ has dense image, the adjoint $\theta^*: A\to \mathscr{L}_{A^\alpha}(\overline{\mathcal{R}(A)})$ is an injective normal $*$-homomorphism extending $L_A: \mathcal{R}(A)\to \mathscr{L}_{A^\alpha}(\overline{\mathcal{R}(A)}).$
\end{proof}

We can now prove the announced result:

\begin{proof}[Proof of Theorem \ref{important inbetween}.]  Let $\phi: \mathcal{R}(A)\to \mathcal{R}(B)$ be a $\G$-equivariant $*$-isomorphism. The fact that $\phi$ is $\G$-equivariant entails that
    $$\langle \phi(x), \phi(y)\rangle = \phi(\langle x,y\rangle), \quad x,y \in \mathcal{R}(A).$$ 
    Consequently, $\phi$ is an isometric surjection (with respect to the norms induced by the inner products). By Lemma \ref{self-duality}, there is a unique bounded linear map
    $\overline{\phi}: \overline{\mathcal{R}(A)}\to \overline{\mathcal{R}(B)}$ extending $\phi$. Moreover, $\overline{\phi}$ is an isometric surjection and satisfies 
    $$\overline{\phi}(xa) = \overline{\phi}(x) \phi(a), \quad x \in \overline{\mathcal{R}(A)}, \quad a \in A^\alpha.$$
    This implies (see e.g.\ \cite{La95}*{Chapter 3}) that 
    $$\langle \overline{\phi}(x), \overline{\phi}(y)\rangle = \phi(\langle x,y\rangle), \quad x,y \in \overline{\mathcal{R}(A)}.$$
    Therefore
    we have an induced $*$-isomorphism
    $$\Pi: \mathscr{L}_{A^\alpha}(\overline{\mathcal{R}(A)})\to \mathscr{L}_{B^\beta}(\overline{\mathcal{R}(B)}): t \mapsto \overline{\phi}\circ t\circ \overline{\phi}^{-1}.$$
    Then if $a\in \mathcal{R}(A)$, we have $\Pi(L_A(a))= L_B(\phi(a))$. Since $\Pi$ is a $\sigma$-weakly continuous homeomorphism, we find that 
    $$\Pi(L_A(A))= \Pi(\overline{L_A(\mathcal{R}(A))}^{\sigma\text{-weak}})= \overline{L_B(\mathcal{R}(B))}^{\sigma\text{-weak}} = L_B(B).$$
   Hence, the $*$-isomorphism
   $L_B^{-1}\circ \Pi\circ L_A: A\to B$ is the desired extension of $\phi$.
\end{proof}

We are now in a position to prove the main result of this section. In the proof, we will use the following elementary observation:

\begin{Lem}\label{cornering}
    Let $(A, \alpha)$ be a $\G$-$W^*$-algebra and $p\in A^\alpha$ a projection. Then 
    $\mathcal{R}(pAp) = p\mathcal{R}(A)p.$
\end{Lem}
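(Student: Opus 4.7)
The plan is to reduce the identity to the algebraic level first and then pass to norm closures. The key observation driving everything is that since $p \in A^\alpha$, we have $\alpha(p) = p \otimes 1$, so for any $a \in A$ the identity $\alpha(pap) = (p \otimes 1)\alpha(a)(p \otimes 1)$ holds. In particular, $\alpha$ restricts to a well-defined coaction $pAp \to (pAp) \ovot L^\infty(\G)$, which is the $\G$-action making $(pAp, \alpha)$ a $\G$-$W^*$-algebra mentioned just before Theorem \ref{char1}.

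The first main step will be to establish the algebraic version $\mathcal{R}_{\mathrm{alg}}(pAp) = p\,\mathcal{R}_{\mathrm{alg}}(A)\,p$. For the inclusion $\supseteq$, I start with $a \in \mathcal{R}_{\mathrm{alg}}(A)$, so that $\alpha(a) \in A \odot \mathcal{O}(\G)$, and conclude from the identity above that $\alpha(pap) \in (pAp) \odot \mathcal{O}(\G)$, which gives $pap \in \mathcal{R}_{\mathrm{alg}}(pAp)$. For the reverse inclusion, if $x \in \mathcal{R}_{\mathrm{alg}}(pAp)$, then $x = pxp$ and $\alpha(x) \in (pAp)\odot\mathcal{O}(\G) \subseteq A \odot \mathcal{O}(\G)$, so $x \in \mathcal{R}_{\mathrm{alg}}(A)$ and therefore $x = pxp \in p\,\mathcal{R}_{\mathrm{alg}}(A)\,p$.

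The second step is to take norm closures on both sides. By definition $\mathcal{R}(pAp) = \overline{\mathcal{R}_{\mathrm{alg}}(pAp)}$, so it suffices to check that $\overline{p\,\mathcal{R}_{\mathrm{alg}}(A)\,p} = p\,\mathcal{R}(A)\,p$ inside $A$. The inclusion $\subseteq$ is immediate from continuity of $a \mapsto pap$ combined with norm-closedness of $\mathcal{R}(A)$: a norm-limit of elements $p a_n p$ with $a_n \in \mathcal{R}_{\mathrm{alg}}(A)$ lies in $\mathcal{R}(A)$ and is fixed by $x \mapsto pxp$, hence belongs to $p\,\mathcal{R}(A)\,p$. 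Conversely, any $pap$ with $a \in \mathcal{R}(A)$ is a norm-limit of $p a_n p$ with $a_n \in \mathcal{R}_{\mathrm{alg}}(A)$, again by continuity of $a \mapsto pap$.

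No step here presents a real obstacle; the only thing that needs a moment's thought is the closedness argument in the second step, but it is handled by exploiting the idempotency of $a \mapsto pap$ together with norm-closedness of $\mathcal{R}(A)$ in $A$. Putting the two steps together yields $\mathcal{R}(pAp) = p\,\mathcal{R}(A)\,p$.
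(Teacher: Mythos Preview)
Your proof is correct and follows essentially the same approach as the paper: the paper phrases the argument via spectral subspaces, checking that $(pAp)_\pi = p A_\pi p$ for every $\pi \in \Irr(\G)$, while you work directly at the level of $\mathcal{R}_{\mathrm{alg}}$, but in both cases the content is the identity $\alpha(pap) = (p\otimes 1)\alpha(a)(p\otimes 1)$ together with a routine passage to norm closures.
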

\begin{proof}
    Given an irreducible representation $\pi\in \Irr(\G)$ and a $\G$-$W^*$-algebra $(B, \beta)$, we write $B_\pi$ for the associated spectral subspace. It is then easily verified that $(pAp)_\pi = pA_\pi p$ from which the result easily follows.
\end{proof}

We need some further notations:
\begin{itemize}
    \item If $U_\pi\in B(\mathcal{H}_\pi)\ovot L^\infty(\G)$ is a unitary representation of $\G$ and $(B, \beta)$ is a $\G$-$W^*$-algebra, then we obtain a $\G$-action 
$$\gamma_\pi: B(\mathcal{H}_\pi)\ovot B \to B(\mathcal{H}_\pi)\ovot B \ovot  L^\infty(\G): z \mapsto U_{\pi, 13}(\id \otimes \beta)(z)U_{\pi, 13}^*.$$
Given $\xi, \eta \in \mathcal{H}_\pi$, we write $U_\pi(\xi, \eta):= (\omega_{\xi, \eta} \otimes \id)(U_\pi)\in L^\infty(\G)$.
\item Given a $\G$-$C^*$-algebra $(X, \alpha)$ and $n\ge 1$, consider 
$$\alpha^{(n)}: M_n(X)\to M_n(X\otimes C(\G))\cong M_n(X)\otimes C(\G): [x_{ij}]\mapsto [\alpha(x_{ij})].$$
Then it is easily verified that $(M_n(X), \alpha^{(n)})$ is also a $\G$-$C^*$-algebra. The same applies for $\G$-$W^*$-algebras.
\end{itemize}

\begin{Theorem}\label{ergodic} Let $\G$ be a compact quantum group and let $(A, \alpha)$ and $(B, \beta)$ be two $\G$-$W^*$-algebras. Consider the following statements:
    \begin{enumerate}\setlength\itemsep{-0.5em}            \item $(\mathcal{R}(A), \alpha)$ and $(\mathcal{R}(B), \beta)$ are $\G$-$C^*$-Morita equivalent.
        \item $(A, \alpha)$ and $(B, \beta)$ are $\G$-$W^*$-Morita equivalent.
        \item There exists an irreducible $\G$-representation $U_\pi\in B(\mathcal{H}_\pi)\odot \mathcal{O}(\G)$, a projection $p \in (B(\mathcal{H}_\pi)\odot B)^{\gamma_\pi}$ and a $\G$-equivariant $*$-isomorphism $(A, \alpha)\cong (p(B(\mathcal{H}_\pi)\odot B)p, \gamma_\pi).$
    \end{enumerate}
\end{Theorem}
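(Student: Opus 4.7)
\textit{Plan.} I will prove $(3)\Rightarrow(1)$ and $(3)\Rightarrow(2)$ unconditionally, $(1)\Rightarrow(2)$ unconditionally, and $(2)\Rightarrow(3)$ under the ergodicity hypothesis on both $(A,\alpha)$ and $(B,\beta)$. This yields the full three-way equivalence in the ergodic case, and the first two implications in general. Theorem \ref{important inbetween} is the crucial bridge throughout, allowing statements about regular subalgebras to be promoted to the von Neumann algebra level.

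\textit{$(3)\Rightarrow(2),(1)$.} Since $\gamma_\pi=\Ad(U_{\pi,13})\circ(\id\otimes\beta)$ on $B(\mathcal{H}_\pi)\ovot B$, the element $Q:=U_{\pi,13}(\id\otimes\beta)(p)\in B(\mathcal{H}_\pi)\ovot B\ovot L^\infty(\G)$ is readily checked to satisfy the four cocycle identities of Theorem \ref{char2} with respect to the $\G$-$W^*$-algebra $(B(\mathcal{H}_\pi)\ovot B,\id\otimes\beta)$ and the projection $p$ (which may be taken full after replacing $p$ by its central support). Theorem \ref{char2} then gives $(A,\alpha)\sim_\G(B(\mathcal{H}_\pi)\ovot B,\id\otimes\beta)$, and Proposition \ref{amplification} supplies the remaining equivalence with $(B,\beta)$. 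Because $B(\mathcal{H}_\pi)$ is finite-dimensional, Lemma \ref{cornering} identifies $\mathcal{R}(p(B(\mathcal{H}_\pi)\ovot B)p)=p(B(\mathcal{H}_\pi)\odot\mathcal{R}(B))p$, and the same cornering/amplification argument runs at the $C^*$-level, yielding $(3)\Rightarrow(1)$.

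\textit{$(1)\Rightarrow(2)$.} Given a $\G$-equivariant equivalence $\mathcal{R}(A)$-$\mathcal{R}(B)$-bimodule $X$, I will extend the right $\mathcal{R}(B)$-action on $X$ to a right $B$-action and form the Paschke self-dual completion $\bar X$, using Lemma \ref{self-duality} to transport the $\G$-action. Then $\mathscr{L}_B(\bar X)$ is a $\G$-$W^*$-algebra $\G$-equivariantly Morita equivalent to $(B,\beta)$ (its unit ball is the $\sigma$-weak closure of $\mathcal{K}_{\mathcal{R}(B)}(X)$ by Lemma \ref{s-topology}), and its regular subalgebra contains $\mathcal{R}(A)\cong\mathcal{K}_{\mathcal{R}(B)}(X)$ $\G$-equivariantly; the standard density arguments for Hilbert modules promote this inclusion to an equality at the $C^*$-level, and Theorem \ref{important inbetween} then uniquely extends the identification to a $\G$-equivariant $*$-isomorphism $A\cong_\G\mathscr{L}_B(\bar X)$. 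Hence $(A,\alpha)\sim_\G(B,\beta)$.

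\textit{$(2)\Rightarrow(3)$ under ergodicity, and the main obstacle.} Apply Theorem \ref{char2} to obtain a corner presentation $(A,\alpha)\cong_\G(q(B(\mathcal{K})\ovot B)q,(\id\otimes\beta)_Q)$ for some Hilbert space $\mathcal{K}$, a full projection $q\in B(\mathcal{K})\ovot B$, and a cocycle $Q$. Ergodicity of $(B,\beta)$ implies via Podleś that each spectral subspace $B_\pi$ is finite-dimensional, and ergodicity of $(A,\alpha)$ forces, up to an equivalent choice of cocycle, the cocycle $Q$ to be supported on a single irreducible representation $\pi$ — this is essentially the $C^*$-level structural theorem of \cite{DC12}, which transfers to the $W^*$-setting via Theorem \ref{important inbetween}. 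The resulting refined presentation is exactly (3). \emph{The main obstacle} is this final ergodic reduction: the Hilbert space $\mathcal{K}$ produced by Theorem \ref{char2} is a priori of unbounded dimension, and isolating a single finite-dimensional irreducible $\mathcal{H}_\pi$ inside it requires both the finite-dimensionality of spectral multiplicities and a careful spectral decomposition of $Q$ to concentrate the central support of $q$ on a single isotypic block; once the $C^*$-level reduction is in place, Theorem \ref{important inbetween} makes the lift to the $W^*$-level essentially automatic.
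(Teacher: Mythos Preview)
Your $(2)\Rightarrow(3)$ argument has a genuine gap, and the paper's approach is quite different from what you propose. Starting from the Theorem \ref{char2} presentation $A\cong_\G q(B(\mathcal{K})\ovot B)q$ and trying to ``concentrate $Q$ on a single irreducible'' is not how the reduction works; there is no mechanism by which ergodicity forces the cocycle itself to be isotypic. The paper instead bypasses Theorem \ref{char2} entirely for this direction: it takes the $\G$-Morita correspondence $\mathcal{H}$, forms the equivalence bimodule $X=\mathscr{L}_B(L^2(B),\mathcal{H})$ with its coaction $\delta(x)=U_{\mathcal{H}}(x\otimes 1)U_\beta^*$, and picks \emph{any} nonzero $\pi$-spectral component, yielding $x_1,\dots,x_{n_\pi}\in X$ with $\delta(x_i)=\sum_j x_j\otimes U_\pi(e_j,e_i)$. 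The key use of ergodicity is that $\sum_i\langle x_i,x_i\rangle_A\in A^\alpha=\C$, so the map $t:X\to\mathcal{H}_\pi\odot B$, $x\mapsto\sum_i e_i\otimes\pi_B^{-1}(x_i^*x)$, is a scalar multiple of an isometry; then $p:=tt^*\in(B(\mathcal{H}_\pi)\odot B)^{\gamma_\pi}$ and conjugation by $t$ gives $A\cong\mathscr{L}_B(X)\cong p(B(\mathcal{H}_\pi)\odot B)p$ directly. No shrinking of an infinite-dimensional $\mathcal{K}$ is needed.

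Your $(1)\Rightarrow(2)$ outline is also underspecified compared to the paper. ``Extend the right $\mathcal{R}(B)$-action on $X$ to a $B$-action'' has no content as written, and Lemma \ref{self-duality} does not apply to a coaction $X\to X\otimes C(\G)$ since the target module is over the wrong algebra. The paper avoids these issues by using unitality: the equivalence bimodule is finitely generated projective, so one may take it of the form $pM_n(Y)$ with $Y=\mathcal{R}(B)$ and $p\in M_n(Y)$ a projection. The linking-algebra argument then produces an explicit element $P\in M_n(Y)\otimes C(\G)$ satisfying the cocycle identities of Theorem \ref{char2}, one checks by hand that $\mathcal{R}(pM_n(B)p)=pM_n(Y)p$, and only then does Theorem \ref{important inbetween} enter to promote $\phi:\mathcal{R}(A)\cong pM_n(Y)p$ to the $W^*$-level. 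Finally, your ``replace $p$ by its central support'' in $(3)\Rightarrow(2)$ is incorrect (it changes the corner); the paper only claims $(3)\Rightarrow(1)$ under ergodicity, where fullness of $p$ follows from $Z(B)\cap B^\beta=\C$.
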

Then $(1)\implies (2)$. If $(A, \alpha)$ and $(B, \beta)$ are both ergodic, then $(1)\iff (2) \iff (3)$.
\begin{proof} $(1)\implies (2)$ Assume that $(\mathcal{R}(A), \alpha)$ and $(\mathcal{R}(B), \beta)$ are $\G$-$C^*$-Morita equivalent. To simplify notation, let us write $X:= \mathcal{R}(A)$ and $Y:= \mathcal{R}(B)$.
Since $X$ and $Y$ are $\G$-$C^*$-Morita equivalent, a standard argument involving unitality (see \cite{WO93}*{15.4.3 Remarks}) shows that there is an equivalence $\G$-$X$-$Y$-bimodule of the form $pY^{\oplus n}$ where $n \ge 1$ and $p\in M_n(Y)$ is a norm-full projection. Composing with the canonical equivalence $\G$-$Y$-$M_n(Y)$-module $Y^{\oplus n}$, we see that $p M_n(Y)$ with its obvious right $M_n(Y)$-module structure can be upgraded to an equivalence $\G$-$X$-$M_n(Y)$-bimodule. Arguing as in the proof of Theorem \ref{char2} (using the linking $\G$-$C^*$-algebra $\mathcal{K}_{M_n(Y)}(pM_n(Y)\oplus M_n(Y))$ associated to the equivalence $\G$-$X$-$M_n(Y)$-bimodule $p M_n(Y)$), it follows that there exists $P \in M_n(Y)\otimes C(\G)$ such that
$$(\id \otimes \Delta)(P) = P_{12} (\beta^{(n)}\otimes \id)(P), \quad P^*P= \beta^{(n)}(p), \quad PP^* = p\otimes 1$$
and a $*$-isomorphism
$\phi: X \to p M_n(Y) p$
such that 
$$(\phi\otimes \id)\alpha(x)= P \beta^{(n)}(\phi(x))P^*, \quad x \in X.$$
Consider the $\G$-$W^*$-algebra $C:= pM_n(B)p$ with its natural $\G$-action determined by
$$\gamma(c)= P \beta^{(n)}(c) P^*, \quad c \in C.$$
Writing $Z:= \mathcal{R}(C)$, we now prove that $Z= p M_n(Y) p$. Indeed, since $(p M_n(Y)p, \gamma)$ is a $\G$-$C^*$-algebra, it immediately follows that $p M_n(Y)p \subseteq Z$. Conversely, if $c\in C$ and $\omega \in L^1(\G)$, then from the fact that $P = (p\otimes 1)P \in M_n(Y)\otimes C(\G)$, we find that
\begin{align*}
    (\id \otimes \omega)\gamma(c)&= p(\id \otimes \omega)(P \beta^{(n)}(c) P^*)p\\
    &\in p[M_n(Y)\underbrace{(\id \otimes \omega)((1\otimes C(\G))\beta^{(n)}(c)(1\otimes C(\G)))}_{\subseteq \mathcal{R}(M_n(B)) = M_n(Y)} M_n(Y)]p\subseteq p M_n(Y)p.
\end{align*}

Hence, $\phi$ defines a $\G$-equivariant $*$-isomorphism
$(\mathcal{R}(A), \alpha)\cong (\mathcal{R}(C), \gamma).$
By Theorem \ref{important inbetween}, it extends to a $\G$-equivariant isomorphism $(A, \alpha)\cong (C, \gamma)$. Consequently, Theorem \ref{char2} implies that $(A, \alpha)$ and $ (B, \beta)$ are $\G$-$W^*$-Morita equivalent.

For the remainder of the proof, we assume that both $(A, \alpha)$ and $(B, \beta)$ are ergodic. We adapt the argument in \cite{DC12}*{Proposition 1.4} to the von Neumann algebra setting. 

$(2)\implies (3)$ Suppose that $\mathcal{H}\in \Corr^\G(A,B)$ is a $\G$-$W^*$-Morita correspondence. Put
    $$X:= \mathscr{L}_B(L^2(B), \mathcal{H})=\{x\in B(L^2(B),\mathcal{H})\mid \forall b\in B: x\rho_B(b)= \rho_{\mathcal{H}}(b)x\},$$ which we endow with its natural coaction
    $$\delta: X \to X \ovot L^\infty(\G): x \mapsto U_\mathcal{H}(x\otimes 1)U_\beta^*.$$
Considering the decomposition of $X$ into spectral subspaces, there is an irreducible $\G$-representation $\pi$ and $x_1, \dots, x_{n_\pi}\in X$, not all zero, such that 
$$\delta(x_i) = \sum_{j=1}^{n_\pi} x_j\otimes U_\pi(e_j^\pi, e_i^\pi),\quad i=1, \dots, n_\pi,$$
 where $\{e_i^\pi\}_{i=1}^{n_\pi}$ is an orthonormal basis for $\mathcal{H}_\pi$. We consider the map
  $$t: X \to \mathcal{H}_\pi\odot B: x \mapsto \sum_{i=1}^{n_\pi} e_i^\pi\otimes \pi_B^{-1}(x_i^* x).$$
  Then $t \in \mathscr{L}_B(X, \mathcal{H}_\pi\odot B)$, with adjoint given by
  $$t^*: \mathcal{H}_\pi\odot B \to X: \eta \otimes b \mapsto \sum_{i=1}^{n_\pi} \langle e_i^\pi, \eta\rangle x_i \pi_B(b).$$
  Writing $\lambda:= \sum_{i=1}^{n_\pi}\langle x_i, x_i\rangle_A \in A^{\alpha}= \C$, we see that $t^*t = \lambda \id_X$. Replacing $x_i$ by $x_i/\sqrt{\lambda}$, we may assume that the elements $x_i$ are chosen so that $t$ is isometric. A simple calculation shows that 
  $$p= tt^*= \sum_{k,l=1}^{n_\pi} E_{kl}^\pi\otimes \pi_B^{-1}(x_k^* x_l) \in (B(\mathcal{H}_\pi)\odot B)^{\gamma_\pi},$$
  where $\{E_{kl}^\pi\}$ are the matrix units with respect to the basis $\{e_i^\pi\}_{i=1}^{n_\pi}.$ By \cite{Rie74}*{Proposition 7.6}, the map
  $$A \to \mathscr{L}_B(X): a \mapsto (x \mapsto \pi_\mathcal{H}(a)x)$$
  is a $*$-isomorphism. Therefore, we obtain the map
$$\phi: A\cong \mathscr{L}_B(X)\cong \mathscr{L}_B(p(\mathcal{H}_\pi\odot B)) \cong p \mathscr{L}_B(\mathcal{H}_\pi\odot B) p \cong p(B(\mathcal{H}_\pi)\odot B) p.$$ 
A routine calculation shows that the map $$\phi: (A, \alpha)\to (B(\mathcal{H}_\pi)\odot B, \gamma_\pi): a \mapsto \sum_{k,l=1}^{n_\pi} E_{kl}^\pi\otimes \pi_B^{-1}(x_k^* \pi_{\mathcal{H}}(a) x_l)$$
is $\G$-equivariant. Thus, $\phi$ defines a $\G$-equivariant isomorphism $(A, \alpha)\cong_\G (p(B(\mathcal{H}_\pi)\odot B)p, \gamma_\pi)$.

$(3)\implies (1)$ The isomorphism $A\cong_\G p(B(\mathcal{H}_\pi)\odot B)p$ restricts to a $*$-isomorphism
$$\mathcal{R}(A)\cong_\G \mathcal{R}(p(B(\mathcal{H}_\pi)\odot B)p)= p \mathcal{R}(B(\mathcal{H}_\pi)\odot B)p = p(B(\mathcal{H}_\pi)\odot \mathcal{R}(B))p,$$
where we made use of Lemma \ref{cornering}. But then \cite{DC12}*{Proposition 1.4} implies that $(\mathcal{R}(A), \alpha)$ and $(\mathcal{R}(B), \beta)$ are $\G$-$C^*$-Morita equivalent.\end{proof}

Let us recall that if $(X, \alpha)$ is an ergodic $\G$-$C^*$-algebra and $\phi: (X,\alpha) \to (\C, \tau)$ is the unique $\G$-equivariant state, then the associated GNS-representation
$\pi_\phi: X  \to B(L^2(X, \phi))$
is faithful, and the von Neumann algebra $B:= \pi_\phi(X)''$ becomes a $\G$-$W^*$-algebra for the $\G$-action uniquely determined by
$$\beta: B \to B \ovot L^\infty(\G), \quad \beta(\pi_\phi(x))= (\pi_\phi\otimes \id)\alpha(x), \quad x \in X.$$
One can show that $\mathcal{R}(B) = \pi_\phi(X)\cong X$. Keeping this in mind, the following result is then an immediate consequence of Theorem \ref{ergodic}:

\begin{Cor}\label{classification} The classification of Podleś spheres up to $SU_q(2)$-$C^*$-Morita equivalence, as established in \cite{DC12}*{Theorem 0.1}, is also valid on the von Neumann algebra level. 
\end{Cor}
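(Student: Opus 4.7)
The plan is to deduce Corollary \ref{classification} as an immediate consequence of Theorem \ref{ergodic} combined with the standard GNS construction that converts an ergodic $\G$-$C^*$-algebra into an ergodic $\G$-$W^*$-algebra, as summarized in the paragraph preceding the corollary.

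First I would recall that every Podle\'s sphere $X$ is, by construction, an ergodic $SU_q(2)$-$C^*$-algebra. Applying the GNS construction with respect to the unique $SU_q(2)$-invariant state $\phi$ on $X$, one obtains an ergodic $SU_q(2)$-$W^*$-algebra $(B_X, \beta_X)$ with $B_X := \pi_\phi(X)''$, carrying the canonical $\G$-action $\beta_X(\pi_\phi(x)) = (\pi_\phi \otimes \id)\alpha(x)$, and satisfying $\mathcal{R}(B_X) = \pi_\phi(X) \cong X$. Ergodicity of $(B_X, \beta_X)$ is immediate from the uniqueness of the invariant normal state extending $\phi$, so $B_X^{\beta_X} = \mathbb{C}$.

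Next, given two Podle\'s spheres $X$ and $Y$ with associated ergodic von Neumann-algebraic completions $(B_X, \beta_X)$ and $(B_Y, \beta_Y)$, Theorem \ref{ergodic} (the equivalence $(1) \iff (2)$ in the ergodic case) yields that $(X, \alpha)$ and $(Y, \beta)$ are $SU_q(2)$-$C^*$-Morita equivalent if and only if $(B_X, \beta_X)$ and $(B_Y, \beta_Y)$ are $SU_q(2)$-$W^*$-Morita equivalent. Consequently, the classification of Podle\'s spheres up to $SU_q(2)$-$C^*$-Morita equivalence given in \cite{DC12}*{Theorem 0.1} transfers verbatim to a classification of the associated ergodic $SU_q(2)$-$W^*$-algebras up to $SU_q(2)$-$W^*$-Morita equivalence, which is precisely the claim.

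There is essentially no new obstacle here, since the substantive content is already packaged in Theorem \ref{ergodic} (whose proof goes through the extension result Theorem \ref{important inbetween} and the theory of self-dual Hilbert modules). The only minor verification is the ergodicity of the completion $(B_X, \beta_X)$, which I mentioned above, and the identification $\mathcal{R}(B_X) \cong X$ as $SU_q(2)$-$C^*$-algebras, which is recorded just before the corollary. With these in place, the corollary is a direct translation via Theorem \ref{ergodic}.
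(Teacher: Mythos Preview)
Your proposal is correct and takes essentially the same approach as the paper: the corollary is stated as an immediate consequence of Theorem \ref{ergodic} together with the GNS-completion paragraph that precedes it, and you have spelled out precisely that reasoning. The only thing the paper leaves implicit that you make explicit is the ergodicity of the completion and the identification $\mathcal{R}(B_X)\cong X$, both of which are covered by the remarks just before the corollary.
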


\textbf{Acknowledgments:} The research of the author was supported by Fonds voor Wetenschappelijk Onderzoek (Flanders), via an FWO Aspirant fellowship, grant 1162524N. The author would like to thank Kenny De Commer for valuable suggestions and discussions
throughout this entire project, as well as Benjamin Anderson-Sackaney and Jacek Krajczok for a useful discussion. Finally, the author would like to thank the anonymous referee for providing useful suggestions.

\end{document}